\theoremstyle{definition}
\newtheorem{theorem}{Theorem}
\newtheorem{lemma}[theorem]{Lemma}
\newtheorem{corollary}[theorem]{Corollary}
\newtheorem{definition}[theorem]{Definition}
\def\N{\mathbb{N}}
\def\L{\mathscr{L}}
\def\M{\mathscr{M}}
\def\T{\mathbf{T}}
\def\S{\mathbf{S}}
\def\U{\mathbf{U}}
\def\V{\mathbf{V}}
\def\O{\mathcal{O}}
\def\indices{\mathrm{Indices}}
\def\FV{\mathrm{FV}}
\def\LPA{\L_{\mathrm{PA}}}
\def\LPO{\L^{\mathcal{O}}_{\mathrm{PA}}}
\def\Ord{\mathrm{Ord}}
\def\epom{\epsilon_0\cdot\omega}
\def\indset{\mathcal I}
\def\onset{\mathrm{On}}
\def\Tr{\mathrm{Tr}}
\renewcommand{\Pr}[1]{\T_{#1}\mbox{\small${\vDash}$}}
\newcommand{\Prr}[2]{\T^{#1}_{#2}\mbox{\small${\vDash}$}}
\newcommand{\ucl}[1]{\mathrm{ucl}(#1)}
\newcommand{\case}[1]{\textbf{Case #1:}}
\newcommand{\claim}[1]{\textbf{Claim #1:}}
\newcommand{\str}[1]{\mathrm{Str}(#1)} 
\begin{document}

\title{Self-referential theories}
\author{Samuel A.~Alexander\thanks{Email:
alexander@math.ohio-state.edu}\\
\emph{Department of Mathematics, the Ohio State University}}
\date{August 11, 2020}

\maketitle

\begin{abstract}
We study the structure of families of theories in the language
of arithmetic extended to allow these families to refer to
one another and to themselves.
If a theory contains schemata expressing its own truth and
expressing a specific Turing index for itself,
and contains some other mild axioms, then that theory
is untrue. We exhibit some families of
true self-referential theories that barely avoid this forbidden
pattern.
\end{abstract}

\section{Introduction}

This is a paper about families of r.e.~theories, each capable of referring to
itself and the others.
Many of this paper's results first appeared in the author's dissertation 
\cite{alexanderdissert}.  There, they were stated in terms of families
of interacting mechanical knowing agents.  Here, we will
speak instead of families of self-referential r.e.~theories.  We hope this will
more directly expose the underlying mathematics.

In epistemology, it is well-known that a (suitably idealized) truthful knowing machine capable of
arithmetic, logic, and self-reflection, cannot know its own truth and its own code.
This is due, in various guises,
to authors such as
Lucas \cite{lucas},
Benacerraf \cite{benacerraf}, Reinhardt \cite{reinhardt}, Penrose \cite{penrose},
and Putnam \cite{putnam}.  In terms of self-referential
theories, a true theory satisfying certain assumptions cannot contain schemata stating
its own truth and its own G\"odel number
(if such a theory did exist, we could program a machine knower that knows precisely
its consequences).
Reinhardt conjectured, and Carlson proved \cite{carlson2000}, a truthful machine knower
can know (in a local sense, i.e., expressed by infinite schemata rather
than a single axiom) that it is truthful and has some code, without knowing which.
A true self-referential theory can (in a local sense) state its own truth and
recursive enumerability.  We showed \cite{alexandercode} that, alternatively, a truthful
machine can (in a local sense) exactly know its own code, if not required to know its own
truth.  A true theory can state (in a local sense) its own G\"odel number.

Our goal is to generalize the above consistency results to multiple theories.
The paper contains four main findings.
In the following list of promises, except where otherwise stated, $\prec$ is an r.e.\ well-founded partial-order
on $\omega$, and \emph{expresses} is meant in the local (infinite schema) sense.
\begin{enumerate}
\item There are true theories $(T_i)_{i\in \omega}$
such that $T_i$ expresses a G\"odel number of $T_j$ (all $i,j$)
and $T_i$ expresses the truth of $T_j$ (all $j\prec i$).
\item There are true theories $(T_i)_{i\in \omega}$
such that $T_i$ expresses a G\"odel number of $T_j$ ($j\prec i$),
the truth of $T_j$ ($j\preceq i$),
and the fact that $T_j$ has some G\"odel number (all $i,j$).
\item If $\prec$ is ill-founded, and if we extend
the base language to include a predicate for computable
ordinals and require the theories to include rudimentary facts about
them, then 1 and 2 fail.
\item Finally, if we do not extend the base language as in 3,
then there do exist ill-founded r.e.~partial orders $\prec$ such that 1 and 2 hold.
\end{enumerate}
Our proofs of 1 and 2 are constructive, but the proof of 4 is
nonconstructive.  In short, if 4 were false, either of 1 or 2 could be used
to define the set $WF$ of r.e.~well-founded partial orders of $\omega$
using nothing but arithmetic and a truth predicate $\Tr$ for arithmetic. 
This is impossible since $WF$ is $\Pi^1_1$-complete and $\Tr$ is $\Delta^1_1$.

\section{Preliminaries}
\label{prelimsect}

To us, \emph{theory} and \emph{schema} mean \emph{set of sentences}
(a \emph{sentence}
is a formula with no free variables).

\begin{definition}
\label{stddefn}
(Standard Definitions)
\begin{enumerate}
\item When a first-order structure is clear from context, an \emph{assignment} is a 
function $s$ mapping first-order variables into the universe of that structure.  If 
$x$ is a variable and $u$ is an element of the universe, $s(x|u)$ is the 
assignment that agrees with $s$ except that it maps $x$ to $u$.
\item We write $\M\models\phi[s]$ to indicate that the first-order structure $\M$ 
satisfies the formula $\phi$ relative to the assignment $s$.  We write 
$\M\models \phi$ just in case $\M\models\phi[s]$ for every assignment $s$.
If $T$ is a theory, $\M\models T$ means that $\M\models\phi$ for every 
$\phi\in T$.
\item We write $\FV(\phi)$ for the set of free variables of $\phi$.
\item We write $\phi(x|t)$ for the result of substituting term $t$ for 
variable $x$ in $\phi$.
\item $\LPA$ is the language of Peano arithmetic, with constant 
symbol $0$ and function symbols $S$, $+$, $\cdot$ with the usual arities.
If $\L$ extends $\LPA$, an $\L$-structure \emph{has standard 
first-order part} if it has universe $\N$ and interprets $0$, $S$, $+$ and 
$\cdot$ as intended.
\item We define $\LPA$-terms $\overline n$ ($n\in\N$), called 
\emph{numerals}, so that $\overline 0 \equiv 0$ and $\overline{n+1}\equiv S(\overline 
n)$.
\item We fix a computable bijection 
$\langle 
\bullet,\bullet,\bullet\rangle:\N^3\to\N$.
Being computable, this is $\LPA$-definable, so we may freely 
act as if $\LPA$ contained a function symbol for this bijection. 
Similarly we may act as if $\LPA$ contained a binary predicate symbol $\bullet\in 
W_\bullet$ for membership in the $n$th r.e.~set $W_n$.
\item Whenever a computable language is clear from context, 
$\phi\mapsto\ulcorner\phi\urcorner$ denotes G\"odel numbering.
\item A \emph{valid} formula is one that is true in every structure.
\item A \emph{universal closure} of $\phi$ is a sentence $\forall 
x_1\cdots \forall x_n\phi$ where $\FV(\phi)\subseteq\{x_1,\ldots,x_n\}$.
We write $\ucl{\phi}$ to denote a generic universal closure of $\phi$.
\end{enumerate}
\end{definition}

Note that if $\M$ is a structure and $\psi$ is a universal closure of 
$\phi$, in order to prove $\M\models\psi$ it suffices to let $s$ be an 
arbitrary assignment and show $\M\models\phi[s]$.

To formalize self-referential theories, we employ an extension of
first-order logic where languages may contain new unary connective symbols.
This logic is borrowed from \cite{carlson2000}.

\begin{definition}
\label{baselogicdefn}
(The Base Logic)
A language $\L$ of the \emph{base logic} is a first-order language $\L_0$ together with
a class of symbols called \emph{operators}.
Formulas of $\L$ are defined as usual, with the clause that
$\Pr{i}\phi$ is a formula whenever $\phi$ is a formula and $\Pr{i}$
is an operator.
Syntactic parts of Definition \ref{stddefn} extend to the base logic in obvious
ways (we define $\FV(\Pr{i}\phi)=\FV(\phi)$).
An $\L$-structure $\M$ is a first-order $\L_0$-structure $\M_0$
together with a function that takes one operator $\Pr{i}$, one $\L$-formula
$\phi$, and one assignment $s$, and outputs either True or False---in which case we
write $\M\models\Pr{i}\phi[s]$ or $\M\not\models\Pr{i}\phi[s]$, respectively---satisfying
the following three requirements.
\begin{enumerate}
\item Whether or not $\M\models\Pr{i}\phi[s]$ does not depend on $s(x)$ if $x\not\in \FV(\phi)$.
\item If $\phi$ and $\psi$ are \emph{alphabetic variants}
(meaning
that one is obtained from the other by renaming bound variables so as to respect the binding of
the quantifiers), then $\M\models\Pr{i}\phi[s]$ if and only if $\M\models\Pr{i}\psi[s]$.
\item For variables $x$ and $y$ such that $y$ is substitutable for $x$
in $\Pr{i}\phi$, $\M\models\Pr{i}\phi(x|y)[s]$ if and only if $\M\models \Pr{i}\phi[s(x|s(y))]$.
\end{enumerate}
The definition of $\M\models\phi[s]$ for arbitrary $\L$-formulas is obtained from this by induction.
Semantic parts of Definition \ref{stddefn} extend to the base logic in obvious ways.
\end{definition}

Traditionally the operator $\Pr i$ would be written $K_i$, and the formula $K_i\phi$ would be
read like ``agent $i$ knows $\phi$''.  For the present paper, the added intuition would not be
worth the philosophical distraction.

\begin{theorem}
\label{completecompact}
(Completeness and compactness)  Suppose $\L$ is an r.e.~language in the base logic.
\begin{enumerate}
\item The set of valid $\L$-formulas is r.e.
\item For any r.e. $\L$-theory $\Sigma$, $\{\phi\,:\,\Sigma\models\phi\}$ is r.e.
\item There is an effective procedure, given (a G\"odel number of) an r.e.
$\L$-theory $\Sigma$,
to find (a G\"odel number of) $\{\phi\,:\,\Sigma\models\phi\}$.
\item If $\Sigma$ is an $\L$-theory and $\Sigma\models\phi$, there
are $\sigma_1,\ldots,\sigma_n\in\Sigma$ such that\footnote{We write
$A\rightarrow B\rightarrow C$ for $A\rightarrow (B\rightarrow C)$, and likewise for longer
chains.} $\sigma_1\rightarrow\cdots\rightarrow\sigma_n\rightarrow\phi$ is valid.
\end{enumerate}
\end{theorem}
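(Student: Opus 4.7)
The plan is to reduce everything to classical first-order logic via a computable translation, then transfer the classical completeness, compactness, and effectiveness results back. Let $\sim$ be the smallest equivalence relation on $\L$-formulas generated by alphabetic variance together with $\phi \sim \phi(x|y)$ whenever $y$ is substitutable for $x$ in $\phi$. Expand the first-order part $\L_0$ to an r.e.\ first-order language $\L^\sharp$ by adding, for each operator $\Pr{i}$ and each $\sim$-class $[\psi]$ of arity $n := |\FV(\psi)|$, a new $n$-ary predicate symbol $P_{i,[\psi]}$. Define a computable translation $\phi \mapsto \phi^\sharp$ inductively on $\phi$, commuting with first-order connectives and quantifiers, and sending $\Pr{i}\psi$ to $P_{i,[\psi]}(z_1,\ldots,z_n)$, where $z_1,\ldots,z_n$ list $\FV(\psi)$ in an order determined by a fixed computable choice of canonical representative of $[\psi]$.

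Next I would exhibit a bijective correspondence between $\L$-structures $\M$ and $\L^\sharp$-structures $\M^\sharp$ sharing the same first-order part: interpret $P_{i,[\psi]}^{\M^\sharp}$ as the set of tuples $(s(z_1),\ldots,s(z_n))$ for which $\M\models\Pr{i}\psi[s]$; conversely, define the operator semantics on an $\L^\sharp$-structure via the translation. Requirement~1 of Definition~\ref{baselogicdefn} follows from the fixed arity of $P_{i,[\psi]}$; requirement~2 follows from the inclusion of alphabetic variance in $\sim$; requirement~3 follows from the inclusion of safe free-variable substitution in $\sim$, which makes both sides of the substitution biconditional reduce to the same predicate evaluated on the same tuple. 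A straightforward induction then gives $\M\models\phi[s]$ iff $\M^\sharp\models\phi^\sharp[s]$, and hence $\Sigma\models\phi$ in the base logic iff $\Sigma^\sharp\models\phi^\sharp$ in first-order logic, where $\Sigma^\sharp := \{\sigma^\sharp : \sigma\in\Sigma\}$.

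With this reduction, all four claims are immediate translations of their classical counterparts. For claim~1, since $\phi\mapsto\phi^\sharp$ is computable and the valid formulas of an r.e.\ first-order language are r.e., so are the valid $\L$-formulas. For claim~4, apply classical compactness to $\Sigma^\sharp\models\phi^\sharp$ to obtain $\sigma_1^\sharp,\ldots,\sigma_n^\sharp\in\Sigma^\sharp$ with $\sigma_1^\sharp\to\cdots\to\sigma_n^\sharp\to\phi^\sharp$ valid; since translation commutes with $\to$, the untranslated chain $\sigma_1\to\cdots\to\sigma_n\to\phi$ is valid in the base logic. Claim~2 then follows by enumerating finite subsets of $\Sigma$ and checking validity of the corresponding chain using claim~1. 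Claim~3 is the observation that every construction above is uniform in an r.e.\ index for $\L$ and $\Sigma$.

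The main obstacle is engineering $\sim$ precisely so that requirements 1--3 become \emph{automatic} after translation; in particular, requirement~3 forces us to identify $\phi$ with $\phi(x|y)$ in safe cases, and one must verify that the resulting equivalence classes admit a computable canonical representative together with a computable canonical ordering of free variables. Once $\sim$ is set up correctly and the structure correspondence is verified, each of the four assertions follows from a well-known first-order fact essentially without further work.
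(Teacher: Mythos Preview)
Your approach is exactly the paper's: its entire proof is the single line ``By interpreting the base logic within first-order logic (for details see \cite{alexanderdissert}),'' and your translation carries out precisely that interpretation. One technical slip worth fixing: as you have defined $\sim$, a single class can contain formulas with different numbers of free variables (e.g., $\phi\sim\phi(x|y)$ puts $R(x,y)$ and $R(y,y)$ in the same class), so ``arity $n:=|\FV(\psi)|$'' is not well-defined on classes and your translation of $\Pr{i}\psi$ becomes ill-formed when $\psi$ arises by collapsing variables. The repair is to take $n$ to be the number of free variables of the canonical (maximal) representative and let $(z_1,\ldots,z_n)$ be the tuple obtained by matching $\psi$ against that representative, allowing repetitions---exactly so that $(\Pr{i}\phi(x|y))^\sharp\equiv((\Pr{i}\phi)^\sharp)(x|y)$ and requirement~3 reduces to the ordinary first-order substitution lemma. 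You already flag this engineering as the main obstacle; once patched, the reduction goes through as you outline.
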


\begin{proof}
By interpreting the base logic within first-order logic (for details see \cite{alexanderdissert}).
\end{proof}

\begin{definition}
\label{selfrefdefn}
If $\L$ is a first-order language and $I$ is an index set, let $\L(I)$ be the
language (in the base logic) consisting of $\L$ along with 
operators $\Pr{i}$ for all $i\in I$.
\end{definition}

In case $I$ is a singleton, $\LPA(I)$ is a form of Shapiro's \cite{shapiro1985}
language of Epistemic Arithmetic.

\begin{definition}
\label{sentencemaker}
\item
\begin{itemize}
\item
For any $\LPA(I)$-formula $\phi$ with $\FV(\phi)=\{x_1,\ldots,x_n\}$, and for
assignment $s$ (into $\N$), let $\phi^s$ be the sentence
\[
\phi^s \equiv \phi(x_1|\overline{s(x_1)})\cdots(x_n|\overline{s(x_n)})
\]
obtained by replacing all free variables in $\phi$ by numerals for their
$s$-values.
\item
For any language $\mathscr L$ extending $\LPA$, if $\mathscr M$ is an
$\mathscr L$-structure, then $\mathscr M$
is said to \emph{interpret formulas by substitution} if $\mathscr M$ has
standard first-order part and the following
property holds: for every $\mathscr L$-formula $\phi$ and assignment $s$,
$\mathscr M\models\phi[s]$ if and only if $\mathscr M\models\phi^s$.
\end{itemize}
\end{definition}

For example, if $s(x)=0$ and $s(y)=2$
then $(\forall z(x=y+z))^s$ ${\equiv}$ $\forall z(0=S(S(0))+z)$.

\begin{definition}
If $\T=(T_i)_{i\in I}$ is an $I$-indexed family of $\LPA(I)$-theories
and $\mathscr N$ is an $\LPA(I)$-structure, we say $\mathscr N\models\T$
if $\mathscr N\models T_i$ for all $i\in I$.
\end{definition}

\begin{definition}
\label{intendedstructdefn}
Suppose $\T=(T_i)_{i\in I}$ is an $I$-indexed family of 
$\LPA(I)$-theories.
The \emph{intended structure} for $\T$ is the $\LPA(I)$-structure $\M_\T$
with standard first-order part,
interpreting the operators $\Pr{i}$ ($i\in I$) as follows:
\[
\mbox{$\M_\T\models\Pr{i}\phi[s]$ if and only if $T_i\models\phi^s$.}
\]
If $\M_\T\models\T$, we say $\T$ is \emph{true}.
\end{definition}

\begin{lemma}
\label{howintendedworks}
For any family $\T=(T_i)_{i\in I}$ of $\LPA(I)$-theories,
$\M_\T$ interprets formulas by substitution.
\end{lemma}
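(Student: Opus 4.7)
The plan is to prove by structural induction on $\phi$ that $\M_\T\models\phi[s]$ iff $\M_\T\models\phi^s$ for every assignment $s$; that $\M_\T$ has standard first-order part is automatic from Definition \ref{intendedstructdefn}. Throughout, I would lean on two auxiliary facts about the base logic that are routine consequences of Definition \ref{baselogicdefn}: (a) $\M\models\phi[s]$ depends only on $s\rest\FV(\phi)$, obtained by induction from requirement~1; and (b) a substitution lemma $\M\models\phi(x|t)[s]\Leftrightarrow\M\models\phi[s(x|t^{\M,s})]$ for any term $t$ substitutable for $x$ in $\phi$, which extends the first-order substitution lemma to the base logic using requirements~2 and~3 (with a bound-variable rename if necessary). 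Both are standard after the logic is set up, and because numerals are closed terms their substitutability is automatic.

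The atomic and propositional cases are routine. For an $\LPA$-atomic formula, replacing free variables by numerals works because $\M_\T$ has standard first-order part, so $\overline n$ evaluates to $n$; conjunction, negation, and the like reduce immediately to the inductive hypothesis.

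The operator case requires no inductive hypothesis. For $\phi=\Pr{i}\psi$, note $\FV(\Pr{i}\psi)=\FV(\psi)$, so directly from the definition of $\bullet^s$ we have $(\Pr{i}\psi)^s\equiv\Pr{i}(\psi^s)$. Since $\psi^s$ is a sentence, Definition \ref{intendedstructdefn} gives $\M_\T\models\Pr{i}(\psi^s)$ iff $T_i\models\psi^s$, while that same definition gives $\M_\T\models\Pr{i}\psi[s]$ iff $T_i\models\psi^s$. These coincide.

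The quantifier case is the main obstacle, and the only place where the auxiliary facts really earn their keep. For $\phi=\forall x\,\psi$, let $\psi^{\star}$ denote $\psi$ with each free variable \emph{other} than $x$ replaced by the numeral of its $s$-value, so that $(\forall x\,\psi)^s\equiv\forall x\,\psi^{\star}$; when $x\in\FV(\psi)$ one checks by inspection that $\psi^{s(x|u)}\equiv\psi^{\star}(x|\overline u)$. Now $\M_\T\models(\forall x\,\psi)[s]$ iff, for every $u\in\N$, $\M_\T\models\psi[s(x|u)]$; by the inductive hypothesis applied to $\psi$, this is iff $\M_\T\models\psi^{s(x|u)}\equiv\psi^{\star}(x|\overline u)$; by fact~(b), iff $\M_\T\models\psi^{\star}[s(x|u)]$; and by fact~(a), iff $\M_\T\models\forall x\,\psi^{\star}$, i.e., $\M_\T\models(\forall x\,\psi)^s$. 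The subcase $x\notin\FV(\psi)$ is an easy degeneration. The hard part is really the bookkeeping in this case, together with having the base-logic substitution lemma~(b) in hand.
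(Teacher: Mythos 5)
Your overall plan — induction on $\phi$ proving $\M_\T\models\phi[s]\Leftrightarrow\M_\T\models\phi^s$ — is the same as the paper's (the paper simply says ``straightforward induction''), and the atomic, propositional, and $\Pr{i}\psi$ cases are handled correctly. But there is a real gap in the justification of your auxiliary fact~(b).

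You assert that the substitution lemma $\M\models\phi(x|t)[s]\Leftrightarrow\M\models\phi[s(x|t^{\M,s})]$ is a ``routine consequence'' of requirements 2 and 3 of Definition~\ref{baselogicdefn}. Requirement~3, however, only covers substitution of a \emph{variable} $y$ for $x$ inside an operator; it says nothing about substitution of a non-variable term such as a numeral $\overline u$. These are genuinely different: requirements 1--3 do not force any relation between $\M\models\Pr{i}\psi(x|\overline u)[s]$ and $\M\models\Pr{i}\psi[s(x|u)]$, and one can build base-logic structures satisfying 1--3 in which they disagree (interpret $\Pr{i}(x=z)[s]$ as ``$s(x)=s(z)+1$'' while interpreting $\Pr{i}(\overline 3 = z)[s]$ as always false, say). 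So fact~(b) is not a theorem of the base logic in the generality you invoke it, and your quantifier case, which uses it to pass from $\M_\T\models\psi^\star(x|\overline u)$ to $\M_\T\models\psi^\star[s(x|u)]$, rests on an unproved claim.

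The good news is that the repair is short, precisely because the lemma concerns the specific structure $\M_\T$ rather than an arbitrary one. Two ways to close the gap: (i) verify the numeral-substitution lemma directly for $\M_\T$ by another induction, using the concrete interpretation $\M_\T\models\Pr{i}\rho[s]\Leftrightarrow T_i\models\rho^s$ together with the syntactic identity $(\rho(x|\overline u))^s\equiv\rho^{s(x|u)}$; or, more economically, (ii) run the main induction on the number of connectives/operators rather than strict subformula structure, so that the inductive hypothesis also applies to $\psi^\star$ (which has the same connective count as $\psi$), and then observe that $\psi^{s(x|u)}\equiv(\psi^\star)^{s(x|u)}$, so the desired equivalence $\M_\T\models\psi^{s(x|u)}\Leftrightarrow\M_\T\models\psi^\star[s(x|u)]$ \emph{is} an instance of the statement being proved, applied to $\psi^\star$. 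Either route removes the appeal to fact~(b) as a general base-logic principle.
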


\begin{proof}
In other words,
we must show that
for every $\LPA(I)$-formula $\phi$ and assignment $s$,
$\M_\T\models\phi[s]$ if and only if $\M_\T\models\phi^s$.
The proof is a straightforward induction.
\end{proof}

\begin{definition}
By the \emph{axioms of Peano arithmetic for $\LPA(I)$} we mean the axioms of Peano arithmetic,
with induction extended to $\LPA(I)$.
\end{definition}

\begin{lemma}
\label{extendedpalemma}
For any $\LPA(I)$-structure $\M$, if $\M$
interprets formulas by substitution,
then $\M$ satisfies the axioms of Peano arithmetic for $\LPA(I)$.
\end{lemma}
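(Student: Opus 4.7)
The plan is to split the axioms of Peano arithmetic for $\LPA(I)$ into two groups: (i) the finite list of algebraic axioms about $0$, $S$, $+$, $\cdot$, and (ii) the induction schema ranging over all $\LPA(I)$-formulas. Group (i) consists of $\LPA$-sentences valid in the standard model $\N$, so they are satisfied by $\M$ automatically because $\M$ has standard first-order part. It remains only to handle the induction schema.

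For induction, I would fix an $\LPA(I)$-formula $\phi$ and an arbitrary assignment $s$, assume $\M \models \phi(x|0)[s]$ and $\M \models \forall x(\phi \to \phi(x|S(x)))[s]$, and prove $\M \models \phi[s(x|n)]$ for every $n \in \N$ by meta-induction on $n$. Since $s$ is arbitrary and covers the other free variables of $\phi$, this delivers the universal closure form of the induction instance.

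In both the base and the step case, the bridge is the hypothesis itself: for any $\LPA(I)$-formula $\psi$ and assignment $t$, $\M \models \psi[t]$ iff $\M \models \psi^t$. For $n=0$, both $\M \models \phi(x|0)[s]$ and $\M \models \phi[s(x|0)]$ reduce to $\M$ satisfying the single sentence obtained from $\phi$ by replacing free $x$ with $0 \equiv \overline{0}$ and the other free variables with numerals from $s$, so the assumed antecedent yields the base. For the step, applying the assumed universal at $s(x|n)$ gives $\M \models \phi(x|S(x))[s(x|n)]$; passing to sentence form produces $\phi$ with $x$ replaced by $S(\overline{n})$ and the other free variables by numerals from $s$, which, since $S(\overline{n}) \equiv \overline{n+1}$, is literally $\phi^{s(x|n+1)}$, and a second invocation of the hypothesis yields $\M \models \phi[s(x|n+1)]$.

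The one subtlety to watch is that the base logic's built-in substitution clause for operators (item 3 of Definition \ref{baselogicdefn}) is restricted to variable-for-variable substitution, so no ordinary substitution lemma is available when $\phi$ has operator subformulas, and one cannot blindly convert $\M \models \phi(x|S(x))[s(x|n)]$ into $\M \models \phi[s(x|n+1)]$ the way one would in pure first-order logic. The ``interprets formulas by substitution'' hypothesis is exactly what sidesteps this: pushing everything to closed sentences removes all substitution-into-operator questions, and the definitional identities $\overline{0} \equiv 0$, $S(\overline{n}) \equiv \overline{n+1}$ then close the argument.
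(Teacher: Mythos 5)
Your proposal is correct and follows essentially the same route as the paper: non-induction axioms are discharged via the standard first-order part, and induction instances are handled by translating $\M\models\phi[s(x|m)]$ into the closed-sentence statement $\M\models\phi^{s(x|m)}$ via the substitution hypothesis, running the meta-induction there using the identity $\phi(x|S(x))^{s(x|m)}\equiv\phi^{s(x|m+1)}$. The only difference is cosmetic (you induct on $\M\models\phi[s(x|n)]$ and shuttle back and forth, while the paper inducts directly on $\M\models\phi^{s(x|m)}$), and your explicit remark about why the built-in operator substitution clause is insufficient is a helpful clarification the paper leaves implicit.
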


\begin{proof}
Let $\M$ be any $\LPA(I)$-structure which interprets formulas by substitution.
This means $\M$ has standard-first order part and for every formula $\phi$
and assignment $s$,
$\M\models \phi[s]$ if and only if $\M\models\phi^s$.

Let $\sigma$ be an axiom of Peano arithmetic for $\LPA(I)$.  If $\sigma$ is not an instance
of induction, then $\M\models\sigma$ since $\M$ has standard first-order part.
But suppose $\sigma$ is $\ucl{\phi(x|0)\rightarrow \forall x(\phi\rightarrow\phi(x|S(x)))\rightarrow\forall x\phi}$.
To see $\M\models\sigma$, let $s$ be an arbitrary assignment and assume
$\M\models\phi(x|0)[s]$ and $\M\models\forall x(\phi\rightarrow \phi(x|S(x)))[s]$.
By assumption, $\M\models\phi^{s(x|0)}$ and $\forall m\in\N$,
if $\M\models \phi^{s(x|m)}$ then $\M\models \phi(x|S(x))^{s(x|m)}$.
Evidently $\phi(x|S(x))^{s(x|m)}\equiv\phi^{s(x|m+1)}$.
By mathematical induction, $\forall m\in\N$, $\M\models\phi^{s(x|m)}$.
By assumption, $\M\models \forall x\phi[s]$.
\end{proof}

\begin{definition}
Suppose $\T=(T_i)_{i\in I}$ is a family $\LPA(I)$-theories.
If $\T^+=(T^+_i)_{i\in I}$ is another such family, we say $\T\subseteq\T^+$
if $T_i\subseteq T^+_i$ for every $i\in I$.
If $T$ is a single $\LPA(I)$-theory,
we say $T\subseteq\T$ if $T\subseteq T_i$ for all $i\in I$.
If $\T^1=(T^1_i)_{i\in I}$ and $\T^2=(T^2_i)_{i\in I}$
are families of $\LPA(I)$-theories, $\T^1\cup\T^2$
is the family $\T'=(T'_i)_{i\in I}$ where each $T'_i=T^1_i\cup T^2_i$.
Arbitrary unions $\bigcup_{n\in X}\T^n$ are defined similarly.
\end{definition}

\begin{definition}
Suppose $\T=(T_i)_{i\in I}$ is a family of $\LPA(I)$-theories.
For each $i\in I$, we say $T_i$ is \emph{$\Pr i$-closed} if
$\Pr i\phi\in T_i$ whenever $\phi\in T_i$.
We say $\T$ is \emph{closed} if each $T_i$ is $\Pr i$-closed.
\end{definition}

\begin{definition}
If $I$ is an r.e.~index set, a family $\T=(T_i)_{i\in I}$ is \emph{r.e.}~just in case
$\{(\phi,i)\,:\,\phi\in T_i\}$ is r.e.
\end{definition}

\section{Generic Axioms}
\label{genericaxiomssection}

If $\T$ is a family of
theories whose truth was in doubt,
and if we state a theorem removing that doubt,
we often state more:
that $\T\cup\S$
is true,
where $\S$ is some
background theory of provability, including non-controversial things
like Peano arithmetic or the schema $\ucl{\Pr i(\phi\rightarrow\psi)\rightarrow\Pr i\phi\rightarrow\Pr i\psi}$.
The choice of $\S$ is somewhat arbitrary, or at best based on tradition.
We will avoid this arbitrary choice by stating results in the form:
``$\T$ is true together with any background theory of provability such that\ldots''

\begin{definition}
\label{closedregenericdefn}
A family $\T$ of $\LPA(\omega)$-theories is
\emph{closed-r.e.-generic}
if $\T$ is r.e.~and
$\M_{\U}\models\T$ for every closed r.e.~family $\U\supseteq\T$ of $\LPA(\omega)$-theories.
\end{definition}

\begin{lemma}
\label{genericclosedreunion}
If $\T$ is a union of closed-r.e.-generic families
and $\T$ is r.e., then $\T$ is closed-r.e.-generic.
\end{lemma}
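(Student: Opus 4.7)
The plan is to unfold the definition of closed-r.e.-generic and observe that the hypothesis directly supplies what is needed. Write $\T = \bigcup_{n \in X} \T^n$ where each $\T^n$ is closed-r.e.-generic, and assume $\T$ is r.e.

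To establish that $\T$ is closed-r.e.-generic, the r.e.~requirement is given by hypothesis, so the only thing to check is that $\M_\U \models \T$ whenever $\U$ is a closed r.e.\ family of $\LPA(\omega)$-theories with $\U \supseteq \T$. Fix such a $\U$. For each $n \in X$ we have $\T^n \subseteq \T \subseteq \U$, so $\U$ is a closed r.e.\ family extending $\T^n$. Since $\T^n$ is closed-r.e.-generic, this forces $\M_\U \models \T^n$. Taking the union over $n \in X$ gives $\M_\U \models \bigcup_{n \in X}\T^n = \T$, as required.

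There is essentially no obstacle here; the lemma is a definitional consequence, with the only small point being to note that a family containing $\T$ automatically contains each $\T^n$, which is immediate from the definition of union of indexed families given earlier. The r.e.\ hypothesis on $\T$ is needed only to satisfy the first clause of the definition, since a union of r.e.\ families need not be r.e.\ in general, which is why it must be assumed separately.
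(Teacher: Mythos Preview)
Your proof is correct and is precisely the straightforward unfolding of the definition that the paper has in mind; the paper's own proof is simply the word ``Straightforward.''
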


\begin{proof}
Straightforward.
\end{proof}

\begin{definition}
For $i\in I$ and for $T$ an $\LPA(I)$-theory,
we write $[T]_i$ for the family $\T=(T_k)_{k\in I}$
where $T_i=T$ and $T_k=\emptyset$ for all $k\not=i$.
\end{definition}

\subsection{Closed-r.e.-generic Building Blocks}
\label{closedregenericblocks}

In this subsection, we will exhibit some examples of closed-r.e.-generic families.
They can be combined in diverse ways, via Lemma \ref{genericclosedreunion}, to
form background theories of provability. This will allow us to state
Theorem \ref{onethreethree} below in a generalized way, essentially saying
that a certain doubted theory is consistent with \emph{any} background theory
of provability made up of closed-r.e.-generic building blocks.
The alternative would be for us to arbitrarily choose one such background
theory and build it directly into Theorem \ref{onethreethree}, which would
cause the core details in the proof of Theorem \ref{onethreethree} to get
jumbled up with unimportant distractions.


It is common for a theory to state its own
closure under modus ponens. When there are multiple theories, it
is less clear whether each individual theory should only state its
own closure thereunder, or the closure
of all the other theories, or of some subset thereof.
With the following lemma, we avoid arbitrarily
imposing a decision along these lines.

\begin{lemma}
\label{firstutilbagdeduction}
For any $i,j\in\omega$, the following family is closed-r.e.-generic:
\begin{itemize}
    \item
    $[S]_i$ where $S$ is: ($j$-Deduction) the schema
    $\ucl{\Pr j(\phi\rightarrow\psi)\rightarrow \Pr j\phi\rightarrow\Pr j\psi}$.
\end{itemize}
\end{lemma}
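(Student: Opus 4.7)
The plan is to verify the two clauses of Definition \ref{closedregenericdefn} in turn. Since $[S]_i$ has $S$ in its $i$-th slot and $\emptyset$ elsewhere, r.e.-ness reduces to the observation that the schema $\ucl{\Pr j(\phi\rightarrow\psi)\rightarrow \Pr j\phi\rightarrow\Pr j\psi}$ is uniformly r.e. in $\phi$ and $\psi$, which is immediate from standard G\"odel-numbering bookkeeping.

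For the substantive clause, I would fix an arbitrary closed r.e. family $\U=(U_k)_{k\in\omega}$ with $\U\supseteq[S]_i$ and show $\M_\U\models\sigma$ for every $\sigma$ in the $i$-th component of $[S]_i$, i.e., for every instance of $S$. Given such $\sigma=\ucl{\Pr j(\phi\rightarrow\psi)\rightarrow\Pr j\phi\rightarrow\Pr j\psi}$, the remark following Definition \ref{stddefn} lets me reduce the task to: fix an arbitrary assignment $s$, assume $\M_\U\models\Pr j(\phi\rightarrow\psi)[s]$ and $\M_\U\models\Pr j\phi[s]$, and derive $\M_\U\models\Pr j\psi[s]$.

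The central step is to unwind the definition of the intended structure. The two assumptions translate, by Definition \ref{intendedstructdefn}, to $U_j\models(\phi\rightarrow\psi)^s$ and $U_j\models\phi^s$. Since the numeral-substitution operation $(\cdot)^s$ distributes over implication, $(\phi\rightarrow\psi)^s$ is literally $\phi^s\rightarrow\psi^s$, so ordinary semantic modus ponens at the level of $U_j$ yields $U_j\models\psi^s$. Applying Definition \ref{intendedstructdefn} once more, this is exactly $\M_\U\models\Pr j\psi[s]$, as required.

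There is no real obstacle here---and, notably, neither the closedness nor the r.e.-ness of $\U$ is used anywhere in the semantic argument. The lemma is essentially a reflection of propositional modus ponens through the intended-structure translation; its purpose is to furnish an elementary building block that Lemma \ref{genericclosedreunion} can splice together with other closed-r.e.-generic pieces into flexible background theories of provability.
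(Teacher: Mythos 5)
Your proof is correct and follows essentially the same route as the paper's: unwind Definition \ref{intendedstructdefn} to translate the two assumptions into $U_j\models(\phi\rightarrow\psi)^s$ and $U_j\models\phi^s$, note that $(\cdot)^s$ commutes with $\rightarrow$, apply modus ponens in $U_j$, and translate back. The observation that neither closedness nor r.e.-ness of $\U$ is actually needed here is accurate and a sensible thing to flag, though not part of the paper's argument.
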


\begin{proof}
Let $\U=(U_k)_{k\in\omega}$ be any closed r.e.~family of $\LPA(\omega)$-theories
such that $\U\supseteq [S]_i$ where $S$ is $j$-Deduction.  We must show
$\M_{\U}\models [S]_i$.  In other words we must show
$\M_{\U}\models \ucl{\Pr j (\phi\rightarrow\psi)
\rightarrow \Pr j\phi \rightarrow \Pr j \psi}$ for any $\phi,\psi$.
Let $s$ be an assignment and assume $\M_{\U}\models \Pr j (\phi\rightarrow\psi)[s]$
and $\M_{\U}\models \Pr j \phi[s]$, we must show $\M_{\U}\models \Pr j \psi[s]$.
By Definition of $\M_{\U}$, $U_j\models (\phi\rightarrow\psi)^s$ and
$U_j\models \phi^s$.  Clearly $(\phi\rightarrow\psi)^s\equiv
\phi^s\rightarrow\psi^s$ so by modus ponens $U_j\models\psi^s$, that is,
$\M_{\U}\models \Pr j \psi[s]$.
\end{proof}

It might not be controversial to require that a theory express its own
ability to prove valid sentences, but in a multi-theory context, should
we require each theory to express that much about all its fellow theories?
The following lemma allows us to avoid arbitrarily declaring the right
answer to that question. Part 2 of this lemma illustrates an interesting combinatorial
property of closed-r.e.-generic building blocks. Some schemas would not be
suitable building blocks by themselves, but when paired with other schemas,
the combination can become a suitable building block.

\begin{lemma}
\label{firstutilbagvalidity}
For any $i,j\in\omega$, the following families are closed-r.e.-generic:
\begin{enumerate}
    \item
    $[S]_i$ where $S$ is: (Assigned Validity) the schema $\phi^s$ ($\phi$ valid,
    $s$ an assignment).
    \item
    $[\mbox{Assigned Validity}]_i\cup [S]_j$
    where $S$ is: ($i$-Validity) $\ucl{\Pr i \phi}$ for $\phi$ valid.
\end{enumerate}
\end{lemma}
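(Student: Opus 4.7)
The plan is to leverage Lemma \ref{howintendedworks} for Part 1 and then bootstrap that result to handle Part 2 via the Assigned Validity axioms already sitting in $U_i$.

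For Part 1, I would fix an arbitrary closed r.e.\ family $\U$ with $\U \supseteq [\mathrm{Assigned\ Validity}]_i$ and pick an arbitrary valid $\phi$ together with an assignment $s$; the goal is $\M_\U \models \phi^s$. Since $\phi$ is valid, $\M_\U \models \phi[s]$ holds immediately, and Lemma \ref{howintendedworks}, which tells us $\M_\U$ interprets formulas by substitution, converts this into $\M_\U \models \phi^s$. That is the entire argument.

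For Part 2, I would start with an arbitrary closed r.e.\ family $\U$ containing $[\mathrm{Assigned\ Validity}]_i \cup [S]_j$ where $S$ is the $i$-Validity schema, and break the verification $\M_\U \models [\mathrm{Assigned\ Validity}]_i \cup [S]_j$ into its two halves. The first half, $\M_\U \models [\mathrm{Assigned\ Validity}]_i$, is handed to me by Part 1 applied to $\U$. For the second half, I must show $\M_\U \models \ucl{\Pr i \phi}$ for every valid $\phi$, which unfolds to: for any assignment $s$, $U_i \models \phi^s$. The key move, and the whole point of pairing the two schemas, is that $\phi^s$ is itself an instance of Assigned Validity, so because $\U \supseteq [\mathrm{Assigned\ Validity}]_i$ we actually have $\phi^s \in U_i$; hence $U_i \models \phi^s$ trivially.

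There is no real obstacle here; both parts are essentially bookkeeping once Lemma \ref{howintendedworks} is in hand. The one conceptual point worth highlighting, matching the prose preceding the lemma, is why the combination in Part 2 is needed at all: if we only assumed $[S]_j$ with $S$ the $i$-Validity schema, there would be no mechanism inside $U_i$ forcing $U_i \models \phi^s$ for arbitrary valid $\phi$, because arbitrary models of $U_i$ need not have standard first-order part and we have no blanket substitution lemma for numerals in operator-laden formulas in the base logic. The Assigned Validity axioms in $U_i$ plug exactly that gap by making $\phi^s$ a literal member of $U_i$.
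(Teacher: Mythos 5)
Your proof is correct and follows essentially the same route as the paper: Part~1 is Lemma~\ref{howintendedworks} applied to the validity of $\phi$, and Part~2 reduces the $i$-Validity instance $\Pr i\phi$ to membership of $\phi^s$ in $U_i$ via the Assigned Validity axioms. The only omission is that closed-r.e.-genericness (Definition~\ref{closedregenericdefn}) also requires the family itself to be r.e.; the paper discharges this in one line by citing Theorem~\ref{completecompact} (the set of valid formulas is r.e., so Assigned Validity and $i$-Validity are r.e.\ schemas), and you should include that check for completeness.
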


\begin{proof}
Both (1) and (2) are r.e.\ by Theorem \ref{completecompact}.

\item
(1)
Let $\U=(U_k)_{k\in\omega}$ be a closed r.e.~superset of $[S]_i$ where $S$
is Assigned Validity.  We must show $\M_{\U}\models [S]_i$.
If $\phi\in [S]_i$ then $\phi$ is $\phi^s_0$ for some valid $\phi_0$ and
some assignment $s$.  Since $\phi_0$ is valid, $\M_{\U}\models \phi_0[s]$.
By Lemma \ref{howintendedworks}, $\M_{\U}\models \phi^s_0$.

\item
(2)
Let $\U=(U_k)_{k\in \omega}$ be any closed r.e.~family
of $\LPA(\omega)$-theories such that $U_i$ contains Assigned Validity
and $U_j$ contains $i$-Validity.
By (1), $\M_\U$ satisfies Assigned Validity.
It remains to show $\M_{\U}$ satisfies $i$-Validity.
Let $\phi$ be valid and $s$ an assignment.
Since $U_i$ contains Assigned Validity, $U_i\models\phi^s$, so
by definition of $\M_{\U}$, $\M_{\U}\models \Pr i\phi[s]$.
\end{proof}

In modal logic, some papers treat the so-called \emph{positive introspection axiom}
(also known as the \emph{KK axiom}) as one of the fundamental axioms of knowledge,
and some do not.
Rather than join either side,
we prefer instead to study the combinatorial
structure of the axiom, asking: are there other schemas we can add to
it to make the combination closed-r.e.-generic?

\begin{lemma}
\label{firstutilbagintrospection}
For any $i,j\in\omega$, the following family is closed-r.e.-generic:
\begin{itemize}
\item $[\mbox{Assigned Validity}]_i\cup[\mbox{$i$-Validity}]_i\cup
[\mbox{$i$-Deduction}]_i\cup [S]_j$ where $S$ is:
\[\mbox{($i$-Introspection) the schema $\ucl{\Pr i\phi\rightarrow\Pr i\Pr i\phi}$.}\]
\end{itemize}
\end{lemma}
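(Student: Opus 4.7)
The plan is to reduce the lemma to verifying only the new axiom $i$-Introspection in $\M_\U$, then argue that the other three bundled schemas are exactly the tools needed to lift $U_i\models\phi^s$ to $U_i\models\Pr i\phi^s$. Lemma \ref{firstutilbagdeduction} (taking $j=i$) shows $[i\text{-Deduction}]_i$ is closed-r.e.-generic, and Lemma \ref{firstutilbagvalidity}(2) (taking $j=i$) shows $[\text{Assigned Validity}]_i\cup[i\text{-Validity}]_i$ is closed-r.e.-generic. By Lemma \ref{genericclosedreunion} their union is also closed-r.e.-generic (being r.e.), so it will suffice to fix a closed r.e.\ $\U=(U_k)_{k\in\omega}$ extending our family and show $\M_\U\models\ucl{\Pr i\phi\rightarrow\Pr i\Pr i\phi}$ for every formula $\phi$.

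Next I would fix $\phi$ and an arbitrary assignment $s$, and assume $\M_\U\models\Pr i\phi[s]$. By Definition \ref{intendedstructdefn} this means $U_i\models\phi^s$; and because operators bind no variables, $(\Pr i\phi)^s\equiv\Pr i(\phi^s)$, so the goal reduces to showing $U_i\models\Pr i\phi^s$. The central step is to bridge from the semantic fact $U_i\models\phi^s$ to a syntactic statement on which $\Pr i$-closedness can act. By Theorem \ref{completecompact}(4), I can pick sentences $\sigma_1,\ldots,\sigma_n\in U_i$ such that $\sigma_1\rightarrow\cdots\rightarrow\sigma_n\rightarrow\phi^s$ is valid. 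Then $i$-Validity (inside $U_i$) places $\Pr i(\sigma_1\rightarrow\cdots\rightarrow\sigma_n\rightarrow\phi^s)$ inside $U_i$, and $\Pr i$-closedness of $U_i$ places each $\Pr i\sigma_k$ inside $U_i$. Applying $i$-Deduction (also contained in $U_i$) $n$ times would then yield $U_i\models\Pr i\phi^s$, as required.

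The main obstacle is conceptual rather than technical: the raw introspection axiom cannot be forced by itself, because $\Pr i$-closure is a syntactic condition (``$\psi\in U_i$ implies $\Pr i\psi\in U_i$'') whereas the hypothesis $\M_\U\models\Pr i\phi[s]$ is a semantic condition on $U_i$. The three companion schemas supply exactly the machinery -- compactness for the semantic-to-syntactic conversion, $i$-Validity to pull the resulting valid sentence inside $\Pr i$, and $i$-Deduction to discharge its premises under $\Pr i$ -- needed to convert semantic consequence into syntactic membership. Once this is spotted, the actual verification is routine.
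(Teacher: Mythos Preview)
Your proposal is correct and follows essentially the same approach as the paper: reduce to verifying $i$-Introspection in $\M_\U$, unpack $\M_\U\models\Pr i\phi[s]$ to $U_i\models\phi^s$, use compactness (Theorem~\ref{completecompact}) to extract a valid implication from finitely many axioms of $U_i$, then use $i$-Validity, $i$-Deduction, and $\Pr i$-closedness of $U_i$ to obtain $U_i\models\Pr i\phi^s$. The only cosmetic difference is that you package the first three schemas via Lemma~\ref{genericclosedreunion}, whereas the paper cites Lemmas~\ref{firstutilbagdeduction} and~\ref{firstutilbagvalidity} directly.
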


\begin{proof}
Recursive enumerability is by Theorem \ref{completecompact}.
Let $\U=(U_k)_{k\in\omega}$ be any closed r.e.~family
of $\LPA(\omega)$-theories such that $U_i$ contains Assigned Validity, $i$-Validity and $i$-Deduction,
and $U_j$ contains $i$-Introspection.
Then $\M_{\U}$ satisfies Assigned Validity and $i$-Validity by
Lemma \ref{firstutilbagvalidity}.
By Lemma \ref{firstutilbagdeduction},
$\M_{\U}$ satisfies $i$-Deduction.
For $i$-Introspection, let $s$ be an assignment and assume
$\M_{\U}\models \Pr i\phi[s]$, we will show $\M_{\U}\models\Pr i\Pr i\phi[s]$.
Since $\M_{\U}\models \Pr i\phi[s]$, $U_i\models\phi^s$.
By Theorem \ref{completecompact}, there are $\sigma_1,\ldots,\sigma_n\in U_i$
such that $\sigma_1\rightarrow\cdots\rightarrow\sigma_n\rightarrow\phi^s$
is valid.
Since $U_i$ contains $i$-Validity,
$U_i\models \Pr i(\sigma_1\rightarrow\cdots\rightarrow\sigma_n\rightarrow\phi^s)$.
By repeated applications of $i$-Deduction contained in $U_i$,
$U_i\models \Pr i\sigma_1\rightarrow\cdots\rightarrow\Pr i\sigma_n\rightarrow \Pr i\phi^s$.
Since $\U$ is closed, $U_i$ is $\Pr i$-closed and
so contains $\Pr i\sigma_1,\ldots,\Pr i\sigma_n$.
So $U_i\models (\Pr i\phi)^s$ and $\M_{\U}\models \Pr i\Pr i\phi[s]$.
\end{proof}

The following lemma shows that arithmetic is generic, which will enable us
to state a later result (Theorem \ref{onethreethree}) in such a way that it is
clear that the result is neither contingent on the presence, nor the absense,
of arithmetic in the theories in question.

\begin{lemma}
\label{firstutilbagarithmetic}
For any $i\in\omega$, $[S]_i$ is closed-r.e.-generic,
where $S$ is the set of axioms of Peano arithmetic for $\LPA(\omega)$.
\end{lemma}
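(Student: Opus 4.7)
The plan is to check the two clauses in Definition \ref{closedregenericdefn} directly. First, $[S]_i$ is r.e.\ because the axioms of Peano arithmetic for $\LPA(\omega)$ form an r.e.\ set of sentences (the induction schema is parameterized effectively by $\LPA(\omega)$-formulas, and $\LPA(\omega)$ is an r.e.\ language), and $[S]_i$ just places this r.e.\ set at coordinate $i$ and the empty set elsewhere.

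Second, let $\U=(U_k)_{k\in\omega}$ be any closed r.e.\ family of $\LPA(\omega)$-theories with $\U\supseteq[S]_i$; I need to show $\M_{\U}\models[S]_i$. Since $T_k$ is empty for $k\neq i$, this reduces to showing $\M_{\U}\models\sigma$ for every $\sigma\in S$, i.e., that $\M_{\U}$ satisfies the axioms of Peano arithmetic for $\LPA(\omega)$.

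At this point the work is already done by the lemmas in the preliminaries. By Lemma \ref{howintendedworks}, $\M_{\U}$ interprets formulas by substitution. By Lemma \ref{extendedpalemma}, any $\LPA(\omega)$-structure that interprets formulas by substitution satisfies the axioms of Peano arithmetic for $\LPA(\omega)$. Hence $\M_{\U}\models S$, as required.

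There is no real obstacle here; this lemma is essentially a repackaging of Lemmas \ref{howintendedworks} and \ref{extendedpalemma} into the closed-r.e.-generic framework. Notably, the hypothesis $\U\supseteq[S]_i$ is not even used in the semantic step: $\M_{\U}$ satisfies PA for $\LPA(\omega)$ for \emph{any} family $\U$, which is exactly the point the author wishes to emphasize — arithmetic is always true in intended structures, so including it in a background theory of provability is cost-free from the standpoint of consistency with other closed-r.e.-generic schemata via Lemma \ref{genericclosedreunion}.
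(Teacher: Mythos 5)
Your proof is correct and follows exactly the same route as the paper: the paper's proof is simply ``By Lemmas \ref{howintendedworks} and \ref{extendedpalemma},'' which is precisely the two-step argument you spell out.
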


\begin{proof}
By Lemmas \ref{howintendedworks} and \ref{extendedpalemma}.
\end{proof}

Carlson proved \cite{carlson2000} that it is consistent for an idealized
knowing machine to know
``I am a machine'' (without knowing which specific machine it is).
The following lemma sheds additional light: not only is it consistent for a
knowing machine to know ``I am a machine'', in fact that knowledge is
generic: it does not depend heavily on specific arbitrary
decisions about the background theory of provability.

\begin{lemma}
\label{firstutilbagsmt}
For any $i,j\in\omega$, the following family is closed-r.e.-generic.
\begin{itemize}
\item $[S]_i$ where $S$ is: ($j$-SMT)
(See \cite{carlson2000} and \cite{reinhardt}) $\ucl{\exists e\forall x(\Pr 
j\phi\leftrightarrow x\in W_e)}$, $e\not\in\FV(\phi)$.
\end{itemize}
\end{lemma}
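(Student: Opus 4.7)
The plan is to verify the two conditions from Definition \ref{closedregenericdefn}. Recursive enumerability of the schema (and hence of $[S]_i$) follows from Theorem \ref{completecompact}, since the predicate ``$\phi$ is valid'' is r.e.\ and all other parts of the schema description are plainly decidable. So the work is in showing that $\M_\U\models [S]_i$ for every closed r.e.~family $\U = (U_k)_{k\in\omega}$ with $\U\supseteq [S]_i$.

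Fix such a $\U$, fix a formula $\phi$ with $e\notin\FV(\phi)$, and let $\sigma$ be the corresponding instance $\ucl{\exists e\forall x(\Pr j\phi\leftrightarrow x\in W_e)}$. Following the remark after Definition \ref{stddefn}, it suffices to pick an arbitrary assignment $s$ and show $\M_\U\models \exists e\forall x(\Pr j\phi\leftrightarrow x\in W_e)[s]$. Consider the set
\[
A \;=\; \{\,n\in\N \,:\, U_j\models \phi^{s(x|n)}\,\}.
\]
Using that $\U$ is r.e., $U_j$ is r.e., and Theorem \ref{completecompact}(2) gives that $\{\psi : U_j\models\psi\}$ is r.e. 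Since the map $n\mapsto \phi^{s(x|n)}$ is computable (it just substitutes a numeral for $x$ in a fixed formula obtained by first substituting numerals for the other free variables according to $s$), $A$ is r.e. Choose $e_0\in\N$ with $W_{e_0} = A$.

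I claim $e_0$ witnesses the existential. Let $n\in\N$ be arbitrary. By definition of $\M_\U$ and Lemma \ref{howintendedworks}, together with the hypothesis $e\notin\FV(\phi)$,
\[
\M_\U\models \Pr j\phi[s(e|e_0)(x|n)] \iff U_j\models \phi^{s(x|n)} \iff n\in A \iff n\in W_{e_0},
\]
and the last condition is exactly $\M_\U\models (x\in W_e)[s(e|e_0)(x|n)]$. Since $n$ was arbitrary, $\M_\U\models \forall x(\Pr j\phi\leftrightarrow x\in W_e)[s(e|e_0)]$, and hence $\M_\U\models \sigma$.

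There is no real obstacle; the only place needing care is the bookkeeping around the free variables. In particular one must check that the universal closure, the existential $\exists e$, and the bound variable $x$ all interact correctly with the assignment $s$, which is why the hypothesis $e\notin\FV(\phi)$ is used to collapse $\Pr j\phi[s(e|e_0)(x|n)]$ to $U_j\models\phi^{s(x|n)}$.
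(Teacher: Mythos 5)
Your argument for the main satisfaction claim is correct and follows essentially the same lines as the paper's proof: fix an assignment $s$, use the recursive enumerability of $U_j$ to produce an r.e.\ index $e_0$ for $\{n : U_j\models\phi^{s(x|n)}\}$, and verify that $e_0$ witnesses the existential; the handling of $e\notin\FV(\phi)$ to reduce $\M_\U\models\Pr j\phi[s(e|e_0)(x|n)]$ to $U_j\models\phi^{s(x|n)}$ is exactly the point that needs care and you treat it correctly. The paper invokes the $S$-$m$-$n$ theorem to produce the index while you invoke Theorem~\ref{completecompact}(2), which is a cosmetic difference (you do not need the uniformity that $S$-$m$-$n$ provides, only the existence of one index). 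One small slip: your justification that $[S]_i$ is r.e.\ ``since the predicate `$\phi$ is valid' is r.e.'' is misplaced --- $j$-SMT imposes no validity condition on $\phi$ (you may be momentarily conflating it with Assigned Validity from Lemma~\ref{firstutilbagvalidity}). The conclusion is still right, and in fact easier: the set of instances of $j$-SMT is plainly r.e.\ because the defining conditions ($e\notin\FV(\phi)$ and the syntactic shape of the sentence) are decidable, with no appeal to Theorem~\ref{completecompact} needed.
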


\begin{proof}
Suppose $\U=(U_i)_{i\in\omega}$ is a closed r.e.~family of
$\LPA(\omega)$-theories and $\U\supseteq [S]_i$ where $S$ is $j$-SMT.
We must show $\M_{\U}\models [S]_i$.
That is, given $\phi$ with $e\not\in\FV(\phi)$, we must show
$\M_{\U}\models \ucl{\exists e \forall x (\Pr j \phi \leftrightarrow x\in W_e)}$.
Let $s$ be an assignment and let $x_1,\ldots,x_k=\FV(\phi)\backslash\{x\}$.
Since $U_j$ is r.e., by the $S$-$m$-$n$ theorem there is some $n$
such that $W_n=\{m\,:\,U_j\models \phi(x|\overline{m})(x_1|\overline{s(x_1)})
\cdots (x_k|\overline{s(x_k)})\}$.  Since $e\not\in\FV(\phi)$, and $\M_{\U}$ has
standard first-order part, it follows that
$\M_{\U}\models \forall x(\Pr j \phi\leftrightarrow x\in W_e)[s(e|n)]$.
\end{proof}

Finally, the following lemma offers a way to obtain new building blocks from old.
This can be combined with Lemma \ref{firstutilbagsmt}
to advance from ``I am a machine'' to ``I know I am a machine''.

\begin{lemma}
\label{firstutilbagclosure}
For any $i,j\in\omega$ and any closed-r.e.-generic family $\T=(T_k)_{k\in\omega}$,
$\T\cup [S]_i$ is closed-r.e.-generic,
where $S$ is the schema: $\Pr j\phi$ ($\phi\in T_j$).
\end{lemma}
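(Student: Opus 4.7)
The plan is to verify the two defining clauses of Definition \ref{closedregenericdefn} for $\T\cup[S]_i$: that it is r.e., and that $\M_\U\models\T\cup[S]_i$ for every closed r.e.\ family $\U\supseteq\T\cup[S]_i$. Recursive enumerability is immediate: $\T$ is r.e.\ by hypothesis, and $S=\{\Pr j\phi\,:\,\phi\in T_j\}$ is r.e.\ because the $j$-th component $T_j$ of the r.e.\ family $\T$ is itself r.e.

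For the main clause, fix a closed r.e.\ family $\U=(U_k)_{k\in\omega}$ with $\U\supseteq\T\cup[S]_i$. Since $\U\supseteq\T$ and $\T$ is closed-r.e.-generic by hypothesis, we immediately get $\M_\U\models\T$. It remains to show $\M_\U\models\Pr j\phi$ for each $\phi\in T_j$. The key observation is that $\phi$ is a sentence (theories consist of sentences), so $\phi^s\equiv\phi$ for every assignment $s$; and from $\T\subseteq\U$ we have $\phi\in T_j\subseteq U_j$, whence $U_j\models\phi^s$, which by Definition \ref{intendedstructdefn} is exactly $\M_\U\models\Pr j\phi[s]$.

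I do not expect any real difficulty here. The only subtlety worth flagging is that the semantics of $\Pr j$ reads directly off semantic consequence from $U_j$, so mere containment $T_j\subseteq U_j$ already forces $\M_\U$ to satisfy $\Pr j\phi$ for every $\phi\in T_j$; no appeal to deduction, validity, or introspection is required, and the $\Pr i$-closure of $U_i$ (guaranteed by $\U$ being closed) is not invoked.
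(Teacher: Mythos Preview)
Your proof is correct and follows essentially the same approach as the paper's: use closed-r.e.-genericity of $\T$ to get $\M_\U\models\T$, then for $\phi\in T_j$ use that $\phi$ is a sentence (so $\phi\equiv\phi^s$) and $T_j\subseteq U_j$ to conclude $U_j\models\phi^s$, i.e., $\M_\U\models\Pr j\phi[s]$. Your explicit verification of recursive enumerability is a small addition the paper leaves implicit, but otherwise the arguments are the same.
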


\begin{proof}
Suppose $\U=(U_i)_{i\in\omega}\supseteq \T\cup [S]_i$ is closed and r.e.
Right away $\M_{\U}\models \T$
because $\T$ is closed-r.e.-generic.  It remains to show that
$\M_{\U}\models [S]_i$, i.e., that $\M_{\U}\models S$.
Fix $\phi\in T_j$ and let $s$ be any assignment.  Since $\phi$ is a sentence,
$\phi\equiv\phi^s$ and thus
$T_j\models\phi^s$.  Since $U_j\supseteq T_j$, $U_j\models\phi^s$.
By definition of $\M_{\U}$, $\M_{\U}\models \Pr j\phi[s]$.
\end{proof}

We gather Lemmas \ref{firstutilbagdeduction}--\ref{firstutilbagclosure}
together into the following summary.

\begin{corollary}
\label{closedregenericsummary}
For any $i,j\in\omega$, each of the following families is closed-r.e.-generic.
\begin{enumerate}
    \item
    $[\mbox{$j$-Deduction}]_i$.
    \item
    $[\mbox{Assigned Validity}]_i$.
    \item
    $[\mbox{Assigned Validity}]_i \cup [\mbox{$i$-Validity}]_j$.
    \item
    $[\mbox{Assigned Validity}]_i \cup [\mbox{$i$-Validity}]_i
    \cup [\mbox{$i$-Deduction}]_i \cup [\mbox{$i$-Introspection}]_j$.
    \item
    $[S]_i$ where $S$ is the set of axioms of Peano arithmetic for $\LPA(\omega)$.
    \item
    $[\mbox{$j$-SMT}]_i$.
    \item $\T\cup [S]_i$, for any closed-r.e.-generic $\T$,
    where $S$ is the schema: $\Pr j\phi$ ($\phi\in T_j$).
\end{enumerate}
\end{corollary}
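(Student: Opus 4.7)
The plan here is pure bookkeeping: the corollary is designed as a consolidated index of the building blocks established in the preceding lemmas, so the proof amounts to matching each of its seven clauses with the lemma that already proved it. First I would observe that clause (1), the single-schema family $[j\text{-Deduction}]_i$, is precisely the content of Lemma \ref{firstutilbagdeduction}. Clauses (2) and (3) are parts (1) and (2), respectively, of Lemma \ref{firstutilbagvalidity}, which separately handles Assigned Validity alone and Assigned Validity paired with $i$-Validity.

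Next I would dispatch the remaining four clauses in order: clause (4), the combination of Assigned Validity, $i$-Validity, $i$-Deduction, and $i$-Introspection, is Lemma \ref{firstutilbagintrospection}; clause (5), the family assigning the axioms of Peano arithmetic for $\LPA(\omega)$ to a single index, is Lemma \ref{firstutilbagarithmetic}; clause (6), the $j$-SMT family, is Lemma \ref{firstutilbagsmt}; and clause (7), the closure-under-$\Pr{j}$ operation starting from a given closed-r.e.-generic $\T$, is Lemma \ref{firstutilbagclosure}.

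Since each clause is verbatim the statement of a previously proved lemma, there is no genuine obstacle. The only thing worth being a little careful about is that each lemma was stated ``for any $i,j\in\omega$'', matching the universal quantifier in the corollary, so the bare citations really do cover every instance. I would write the proof as a single sentence of the form ``Immediate from Lemmas \ref{firstutilbagdeduction}, \ref{firstutilbagvalidity}, \ref{firstutilbagintrospection}, \ref{firstutilbagarithmetic}, \ref{firstutilbagsmt}, and \ref{firstutilbagclosure}.''
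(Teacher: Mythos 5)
Your proposal is correct and matches the paper exactly: the paper introduces this corollary with the sentence ``We gather Lemmas \ref{firstutilbagdeduction}--\ref{firstutilbagclosure} together into the following summary'' and gives no separate proof. Your clause-by-clause matching (1 to Lemma \ref{firstutilbagdeduction}; 2 and 3 to parts 1 and 2 of Lemma \ref{firstutilbagvalidity}; 4 to Lemma \ref{firstutilbagintrospection}; 5 to Lemma \ref{firstutilbagarithmetic}; 6 to Lemma \ref{firstutilbagsmt}; 7 to Lemma \ref{firstutilbagclosure}) is precisely right.
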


The above building blocks are not exhaustive. In choosing building blocks,
a primary concern was to facilitate creation
of background provability theories strong enough to make our consistency
result (Theorem \ref{onethreethree}) generalize Carlson's
consistency result \cite{carlson2000}. If that were our lone motivation,
we could restrict Corollary \ref{closedregenericsummary} to only those families
where $i=j$, but a secondary motivation was to provide inter-theory versions of
those restricted building blocks.
It would be interesting to
investigate questions about whether the above building-blocks are minimal. For example,
in Lemma \ref{firstutilbagintrospection}, is it really necessary to bundle
$j$-Introspection with all three other schemas? For now, we will leave those questions
open.

\section{First Consistency Result: Prioritizing Exact Codes}

The following theorem fulfills the first promise from the introduction.

\begin{theorem}
\label{onethreethree}
Suppose $\prec$ is an r.e.~well-founded
partial order on $\omega$
and $\T^0=(T^0_i)_{i\in \omega}$ is closed-r.e.-generic.
For each $n\in\N$,
let $\T(n)=(T_i(n))_{i\in \omega}$
where each $T_i(n)$ is the smallest $\Pr i$-closed theory
containing the following:
\begin{enumerate}
\item The axioms in $T^0_i$.
\item $\forall x(\Pr j\phi\leftrightarrow\langle\overline{\ulcorner\phi\urcorner},\overline j,x\rangle\in W_{\overline n})$
whenever $j\in\omega$, 
$\FV(\phi)\subseteq\{x\}$.
\item $\ucl{\Pr j\phi\rightarrow\phi}$ whenever $j\prec i$.
\end{enumerate}
There is some $n\in\N$ such that $\T(n)$ is true.
\end{theorem}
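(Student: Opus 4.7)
The plan is to combine a Kleene-style recursion-theoretic fixed point for the Turing index $n$ with a transfinite induction along $\prec$. First, I would set up the fixed point. For each $n$, the base axioms used to build $T_i(n)$ are uniformly r.e.~in $n$ and $i$: clause 1 is r.e.~by the hypothesis that $\T^0$ is closed-r.e.-generic; clause 2 is a computable function of $n,i,j,\phi$; and clause 3 is r.e.~because $\prec$ is. Closing under the rule $\phi\mapsto\Pr i\phi$ is an effective operation, so $\{(\phi,i):\phi\in T_i(n)\}$ is r.e.~uniformly in $n$. Applying Theorem \ref{completecompact}(3) uniformly, the set
\[
A_n = \{\langle\ulcorner\phi\urcorner,j,m\rangle : \FV(\phi)\subseteq\{x\},\ T_j(n)\models\phi(x|\overline m)\}
\]
is r.e.~uniformly in $n$, so by $\smn$ there is a total computable $f$ with $W_{f(n)}=A_n$. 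Kleene's recursion theorem then produces $n^*$ with $W_{n^*}=W_{f(n^*)}=A_{n^*}$.

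Write $\T=\T(n^*)$ and $T_i=T_i(n^*)$. It remains to verify $\M_\T\models T_i$ for every $i\in\omega$. Since each $T_i$ is $\Pr i$-closed by construction, $\T$ is a closed r.e.~family containing $\T^0$; hence by closed-r.e.-genericity of $\T^0$, $\M_\T$ satisfies clause 1. Clause 2 follows directly from the fixed-point identity $W_{n^*}=A_{n^*}$: unpacking via Lemma \ref{howintendedworks}, each instance $\forall x(\Pr j\phi\leftrightarrow\langle\overline{\ulcorner\phi\urcorner},\overline j,x\rangle\in W_{\overline{n^*}})$ reduces to checking, for every $m\in\N$, that $T_j\models\phi(x|\overline m)$ iff $\langle\ulcorner\phi\urcorner,j,m\rangle\in W_{n^*}$, which is exactly the fixed-point identity. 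The added $\Pr i$-closure sentences are also immediate: for any $\psi\in T_i$, $\M_\T\models\Pr i\psi$ iff $T_i\models\psi$, which holds since $\psi\in T_i$.

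The one delicate clause is clause 3, the local soundness schema $\ucl{\Pr j\phi\rightarrow\phi}$ for $j\prec i$. I would handle this by $\prec$-induction on $i$, proving simultaneously the stronger statement that $\M_\T\models T_i$ for all $i$. Assume inductively that $\M_\T\models T_j$ for every $j\prec i$. The preceding paragraph already supplies clauses 1, 2, and the $\Pr i$-closures in $\M_\T$, so only clause 3 of $T_i$ remains. Fix $j\prec i$ and an assignment $s$ with $\M_\T\models\Pr j\phi[s]$; then $T_j\models\phi^s$ by definition of $\M_\T$, and by the induction hypothesis together with soundness of semantic entailment in the base logic, $\M_\T\models\phi^s$, which by Lemma \ref{howintendedworks} yields $\M_\T\models\phi[s]$. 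Well-foundedness of $\prec$ legitimizes the induction, giving $\M_\T\models T_i$ for all $i$ and completing the argument.

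The main obstacle I anticipate is the bookkeeping around the fixed point: one must confirm that the map $n\mapsto$ (an r.e.~index for $A_n$) is uniformly computable, which requires that $T_i(n)$ be uniformly r.e.~in $n,i$ and that Theorem \ref{completecompact}(3) be applied uniformly in $n$. Once that is in hand, the truth verification proceeds cleanly, with well-foundedness of $\prec$ doing the work for clause 3 and closed-r.e.-genericity absorbing the arbitrary background theory of provability $\T^0$.
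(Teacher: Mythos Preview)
Your proposal is correct and follows essentially the same approach as the paper: obtain the fixed point $n$ via $S$-$m$-$n$ plus Kleene recursion, then verify $\M_\T\models T_i$ by $\prec$-induction on $i$, handling the four kinds of axioms (the $T^0_i$-axioms via closed-r.e.-genericity, the index axioms via the fixed-point identity, the truth axioms via the induction hypothesis, and the $\Pr i$-closure axioms directly). The only difference is expository: you factor the three ``absolute'' cases out before stating the induction, whereas the paper treats all four cases uniformly inside the inductive step.
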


\begin{proof}
By the $S$-$m$-$n$ Theorem, there is a total computable $f:\N\to\N$ such that $\forall n\in\N$,
\[
W_{f(n)}=\{\langle\ulcorner\phi\urcorner,i,m\rangle\,:\,
\mbox{$\FV(\phi)\subseteq\{x\}$ and $T_i(n)\models \phi(x|\overline m)$}\}.\]
Using the Recursion Theorem, fix $n\in\N$ such that
$W_{f(n)}=W_n$.
For brevity write $\T$ for $\T(n)$ and $T_i$ for $T_i(n)$.
We will show $\M_\T\models \T$.  This is a self-referential statement: to show $T_i$
is true includes
showing $\M_\T\models \ucl{\Pr j\phi\rightarrow\phi}$, which is essentially the
statement that $T_j$ is true.  Hence the restriction $j\prec i$, which allows induction
since $\prec$ is well founded.
We will show, by $\prec$-induction on $i$, that $\M_\T\models T_i$
for every $i\in\omega$.
Fix $i\in \omega$ and assume $\M_\T\models T_j$ for all $j\prec i$.
Suppose $\sigma\in T_i$, we will show $\M_\T\models\sigma$.

\item
\case1
$\sigma\in T^0_i$.
Then $\M_\T\models\sigma$ because $\T^0$ is closed-r.e.-generic and
$\T\supseteq\T^0$ is closed r.e.

\item
\case2
$\sigma$ is $\forall x(\Pr j\phi\leftrightarrow
\langle\overline{\ulcorner\phi\urcorner},\overline j,x\rangle\in W_{\overline n})$
for some $j\in\omega$, $\FV(\phi)\subseteq\{x\}$.
Let $s$ be an assignment, $m\in\N$.
The following are equivalent.
\begin{align*}
\M_{\T} &\models \Pr{j}\phi[s(x|m)]\\
T_j &\models \phi^{s(x|m)}
 &\mbox{(Definition of $\M_\T$)}\\
T_j &\models \phi(x|\overline m)
 &\mbox{(Since $\FV(\phi)\subseteq\{x\}$)}\\
\langle \ulcorner\phi\urcorner,j,m\rangle &\in W_n
 &\mbox{(By definition of $n$)}\\
\M_{\T} &\models \langle\overline{\ulcorner\phi\urcorner},\overline j,\overline m\rangle\in W_{\overline n}
 &\mbox{($\M_{\T}$ has standard first-order part)}\\
\M_{\T} &\models \langle\overline{\ulcorner\phi\urcorner},\overline j,x\rangle\in W_{\overline n}[s(x|m)].
 &\mbox{(Lemma \ref{howintendedworks})}
\end{align*}

\item
\case3
$\sigma$ is $\ucl{\Pr j\phi\rightarrow\phi}$ for some $j\prec i$.
Let $s$ be an assignment and assume $\M_\T\models\Pr j\phi[s]$.
This means $T_j\models\phi^s$.
By our $\prec$-induction hypothesis, 
$\M_\T\models T_j$,
so
$\M_\T\models \phi^s$.  By Lemma \ref{howintendedworks},
$\M_\T\models \phi[s]$.

\item
\case4
$\sigma$ is only present in $T_i$ because of the clause that $T_i$
is $\Pr i$-closed.
Then $\sigma$ is $\Pr i\sigma_0$ for some $\sigma_0\in T_i$.
Being in $T_i$, $\sigma_0$ is a sentence, so for any assignment $s$,
$\sigma_0\equiv\sigma^s_0$, $T_i\models\sigma^s_0$,
and finally $\M_\T\models \Pr i\sigma_0[s]$.

\item
By $\prec$-induction, $\M_\T\models T_i$ for all $i\in\omega$.
This shows $\M_\T\models\T$, that is, $\T$ is true.
\end{proof}

The first promise from the introduction is met: for any
r.e.~well-founded partial order $\prec$ on $\omega$, there are theories
$(T_n)_{n\in\omega}$ such that $\forall i,j,k\in\omega$ with $j\prec i$, $T_i$ expresses
the truth of $T_j$, and $T_i$ expresses a G\"odel number of $T_k$.
In order to fulfill the second promise
we will extend Carlson's notion of \emph{stratification} to the case of multiple
operators, and introduce \emph{stratifiers}, a tool used to deal with subtleties
that arise when multiple self-referential theories refer to one another.

In \cite{alexandercode} the technique behind Theorem \ref{onethreethree}
was used to exhibit a machine that knows its own code.

\section{Stratification}
\label{stratificationsection}

For the second promise from the introduction, we need to prove a result
like Theorem \ref{onethreethree} where $T_i$ includes
$\ucl{\Pr j\phi\rightarrow\phi}$ for all $j\preceq i$, not just $j\prec i$.
This rules out the direct $\prec$-induction
of the type used above.  Induction on formula complexity will not work either:
we would need to show all of
$T_i$ consistent just to show $\M_\T\models\Pr i(1=0)\rightarrow(1=0)$.
Instead, we will use ordinal induction.
But there are no ordinals anywhere in sight. To obtain ordinals
to induct on, we will modify the theories we care about, in a process
called \emph{stratification}. We will start with some informal motivational
remarks. Readers who would like to advance directly to the formal definitions
can safely skip Subsection \ref{StratificationMotivationSubsection}.

\subsection{Motivation for Stratification}
\label{StratificationMotivationSubsection}

As explained above, we would like to invoke ordinal induction, but there
are no ordinals in sight.
In order to make ordinal
induction relevant, we will do the following.
We will extend the background language to contain not only
the operators $\Pr i$ ($i\in\omega$), but also operators $\Prr \alpha i$
($i\in\omega$, $\alpha\in\epom$). And instead of focusing directly on $T_i$,
we will focus
on a theory $U_i$ such that the result $U^-_i$ of erasing superscripts from
$U_i$ is $U^-_i=T_i$.
The intended interpretation
of $\Prr \alpha i \phi[s]$ will be
$U_i\cap\alpha\models\phi^s$,
where $U_i\cap\alpha$ is the set of axioms of $U_i$ whose superscripts are
$<\alpha$.
Thus, we may think of $U_i$ as a version of $T_i$ with extra information about
the structure of $T_i$.
We will show (Theorem \ref{stratificationtheorem}), for certain
formulas $\phi$ whose superscripts are positive multiples of $\epsilon_0$,
that $\phi$ holds (in the intended interpretation) if and only if
$\phi^-$ holds.
We will use this, after proving that $U_i$ holds, to conclude that
$T_i$ also holds.

Suppose we would like $T_i$ to contain the axiom $\Pr i(1+1=2)$.
Then, as we carry out the procedure in the above paragraph, we would ensure that
$U_i$ contain all sentences of the form $\Prr \alpha i (1+1=2)$.
This would have the side effect that for any $\beta>\alpha$,
$U_i\cap\beta\models\Prr \alpha i (1+1=2)$,
so that $\Prr {\beta} i \Prr \alpha i (1+1=2)$
would hold in structures with the intended interpretation.

Next, suppose that for every arithmetical sentence $\phi$,
we would like $T_i$ to include
\[\Pr i\phi\rightarrow \Pr i\Pr i\phi.\]
Then we would arrange that
$U_i$ contain
\[\Prr {\alpha} i\phi \rightarrow \Prr {\beta} i \Prr {\alpha} i \phi\]
(whenever $\beta>\alpha$).
The reason for the $\beta$ is as follows.
The intended interpretation of $\Prr {\alpha} i \phi$ shall be
$U_i\cap\alpha\models \phi$. Thus, it would make no sense to put
the axiom
$\Prr {\alpha} i\phi \rightarrow \Prr {\alpha} i \Prr {\alpha} i \phi$
into $U_i$: the fact that $U_i\cap\alpha\models \phi$
does not generally imply that
$U_i\cap\alpha\models\Prr \alpha i\phi$,
since $U_i\cap\alpha$ is limited to formulas in which all superscripts
are $<\alpha$.
At least
$\Prr {\alpha} i\phi \rightarrow \Prr {\beta} i \Prr {\alpha} i \phi$
is plausible.

Again, suppose that for some $j\prec i$, we would like for $T_i$ to include
\[
\Pr i(\Pr j (1=0)\rightarrow (1=0)).
\]
We would arrange
that $U_i$ contain (for all $\alpha$):
\[
\Prr {\alpha} i(\Pr j (1=0)\rightarrow (1=0)).
\]
Note the lack of superscript on $\Pr j$.
The intuition is that $U_i$ is a
version of $T_i$ with extra information about the structure of
$T_i$ (namely, that said structure arises from an
increasing family of theories), but without any additional information
about the structure of $T_j$.

Similarly, suppose we would like $T_i$ to include
\[
\Pr j(\Pr i (1=0))\rightarrow \Pr i(1=0).
\]
We would arrange that $U_i$ contain
(for each $\alpha$):
\[
\Pr j(\Pr i(1=0))\rightarrow \Prr {\alpha} i(1=0).
\]
Note the lack of superscript on the $\Pr i$ within the scope of $\Pr j$.
As above, the intuition is that $U_i$
is a version of $T_i$ with extra information about the structure of
$T_i$. It does not have any extra information about the
structure of $T_j$---not even about what $T_j$ says about $T_i$.
This is important because, when $j\prec i$, we would like $T_i$ to
contain axioms declaring, essentially, the G\"odel number of
$T_j$. This G\"odel number would be hardcoded into such axioms,
and thus there would be no hope of such axioms remaining true if
$T_j$ were changed.

\subsection{Stratification Formal Details}
\label{StratificationDetailsSection}

To get a foothold for induction, instead of considering a particular theory
$T_i$, we will be considering
copies of $T_i$ with ordinal-number superscripts added.
To recover information about the original $T_i$ from these
modified theories, we will need to use sophisticated results
from \cite{carlson1999} about the structure of the ordinals.

\begin{definition}
We define a binary relation $\leq_1$ on $\mathrm{Ord}$ by transfinite recursion
so that for all $\alpha,\beta\in\mathrm{Ord}$, $\alpha\leq_1\beta$ if and only if
$\alpha\leq\beta$ and $(\alpha,\leq,\leq_1)$ is a $\Sigma_1$-elementary substructure
of $(\beta,\leq,\leq_1)$.
\end{definition}

The following theorem is based on calculations from \cite{carlson1999}.
It was used by Carlson to prove Reinhardt's conjecture \cite{carlson2000}.
We state it here without proof.

\begin{theorem}
\label{blackbox}
\item
\begin{enumerate}
\item The binary relation $\leq_1$ is a recursive partial ordering on $\epom$.
\item For all positive integers $m\leq n$, $\epsilon_0\cdot m\leq_1\epsilon_0\cdot n$.
\item For any $\alpha\leq\beta\in\mathrm{Ord}$, $\alpha\leq_1\beta$ if and only if
the following statement is true.  For every finite set $X\subseteq\alpha$
and every finite set $Y\subseteq[\alpha,\beta)$, there is a set
$X<\widetilde Y<\alpha$ such that $X\cup\widetilde Y\cong_{(\leq,\leq_1)}X\cup Y$.
\end{enumerate}
\end{theorem}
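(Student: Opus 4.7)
The plan is to tackle the three parts in the order (3), (2), (1), since the combinatorial characterization in (3) is the tool that makes both (2) and (1) accessible. For (3), I would argue by transfinite induction on $\beta$, interleaved with the recursive definition of $\leq_1$. The easy direction---from the isomorphism-embedding property to $\Sigma_1$-elementarity---is that any $\Sigma_1$ assertion with parameters from $X\subseteq\alpha$ witnessed by a tuple $Y\subseteq[\alpha,\beta)$ pulls back along the isomorphism $X\cup\widetilde Y\cong_{(\leq,\leq_1)}X\cup Y$ to a witness inside $\alpha$. The converse is the hard direction: assuming $\alpha\leq_1\beta$, one must express ``there is a $\widetilde Y<\alpha$ with $X\cup\widetilde Y\cong X\cup Y$'' as a $\Sigma_1$ statement in parameters from $X$ and then invoke $\Sigma_1$-elementarity. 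The main obstacle at this stage is that this demands the $(\leq,\leq_1)$-isomorphism type of a finite tuple to be finitely describable; this is exactly the place where the detailed ordinal analysis of \cite{carlson1999} has to be invoked.

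Given (3), part (2) reduces to a combinatorial compression lemma: for any finite $X\subseteq\epsilon_0\cdot m$ and finite $Y\subseteq[\epsilon_0\cdot m,\epsilon_0\cdot n)$, exhibit $\widetilde Y$ with $X<\widetilde Y<\epsilon_0\cdot m$ and $X\cup\widetilde Y\cong_{(\leq,\leq_1)}X\cup Y$. The natural strategy is induction on $n-m$, compressing one $\epsilon_0$-block at a time via an additive shift $\gamma\mapsto\gamma-\epsilon_0$ (refined through Cantor normal forms so as to land cleanly in the previous block). The subtle point is that this translation has to preserve $\leq_1$, not merely $\leq$; I would verify this by establishing invariance of $\leq_1$ under the relevant additive decompositions of ordinals below $\epom$, reducing each such verification via (3) to a finite check on isomorphism types.

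For (1), reflexivity and transitivity are immediate from the $\Sigma_1$-elementarity form of the definition, and antisymmetry descends from $\leq$. The substantive content is recursiveness. I would code ordinals below $\epom$ by iterated Cantor normal forms in base $\omega$ with a leading $\epsilon_0\cdot k$ term, so that $\leq$ becomes decidable. To decide $\alpha\leq_1\beta$ I would appeal to (3) and argue that only finitely many candidate tuples $Y$ need be tested, with a bound computed from the normal-form complexities of $\alpha$ and $\beta$; once such a bound is in hand, a finite search finishes the job. I expect this bounding step to be the principal obstacle overall: the recursion in the definition of $\leq_1$ is well-founded by ordinal rank, but converting that well-foundedness into a computable bound on search depth is precisely the technical heart of the calculations referenced from \cite{carlson1999}.
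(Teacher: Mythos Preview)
The paper does not prove this theorem at all. Immediately after stating it, the paper says: ``We state it here without proof,'' attributing the result to calculations in \cite{carlson1999}. So there is no proof in the paper to compare your proposal against; the theorem is treated as a black box imported from Carlson's earlier work.

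Your outline is a reasonable high-level sketch of how such a proof might be organized, and you are right that the combinatorial characterization in part (3) is the engine behind the other two parts. But you should be aware that what you are sketching is not a gap you can close in a paragraph: the finite describability of $(\leq,\leq_1)$-isomorphism types of tuples, the preservation of $\leq_1$ under the block-compression maps you describe for (2), and especially the computable bound on search depth for (1) are precisely the technical content of \cite{carlson1999}, and the present paper makes no attempt to reproduce any of that. In other words, your proposal is not so much a proof as a correct identification of where the real work lies, together with a pointer to the same external reference the paper already invokes.
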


The usefulness of Theorem \ref{blackbox} will appear
in Theorem \ref{collapsetheorem}, but first we need some machinery.

\begin{definition}
Let $\indset=((\epom)\times\omega)\sqcup\omega$.
Thus $\LPA(\indset)$ contains operators $\Pr{(\alpha,i)}$ for all $\alpha\in\epom$, $i\in\omega$,
along with operators $\Pr{i}$ for all $i\in\omega$.
As abbreviation,
we write $\Prr{\alpha}{i}$ for $\Pr{(\alpha,i)}$,
and refer to $\alpha$ as its \emph{superscript}.
\end{definition}

\begin{definition}
For any $\LPA(\indset)$-formula $\phi$, $\onset(\phi)\subseteq\epom$ denotes
the set of superscripts appearing in $\phi$.
\end{definition}

\begin{definition}
\label{IStratifiedDefinition}
Suppose $i\in\omega$.
The \emph{$i$-stratified} formulas of $\LPA(\indset)$ are defined as follows (where $\phi$ ranges over $\LPA(\indset)$-formulas).
\begin{enumerate}
\item If $\phi$ is $\Pr j\phi_0$ for some $j\not=i$, then $\phi$ is $i$-stratified if and only if $\phi$ is an $\LPA(\omega)$-formula.
\item If $\phi$ is $\Prr\alpha j\phi_0$ for some $j\not=i$, then $\phi$ is not $i$-stratified.
\item If $\phi$ is $\Pr i\phi_0$, then $\phi$ is not $i$-stratified.
\item If $\phi$ is $\Prr\alpha i\phi_0$, then $\phi$ is $i$-stratified if and only if $\phi_0$ is $i$-stratified and
$\alpha>\onset(\phi_0)$.
\item If $\phi$ is $\neg\phi_0$, $\phi_1\rightarrow\phi_2$, or $\forall x\phi_0$,
then $\phi$ is $i$-stratified if and only if its immediate subformula(s) are.
\item If $\phi$ is atomic, then $\phi$ is $i$-stratified.
\end{enumerate}
An $\LPA(\indset)$-theory $T$ is $i$-stratified if $\phi$ is $i$-stratified whenever $\phi\in T$.
An $\LPA(\indset)$-formula $\phi$ is \emph{very $i$-stratified} if $\phi$ is $i$-stratified
and $\onset(\phi)\subseteq\{\epsilon_0\cdot 1,\epsilon_0\cdot 2,\ldots\}$.
\end{definition}

For example:
\begin{itemize}
\item $\Prr{\omega}7 \Prr 5 7(1=0)\rightarrow \Pr 8(1=0)$ is $7$-stratified but not $6$- or $8$-stratified.
\item $\Prr 5 7 \Prr\omega 7(1=0)$ is not $7$-stratified, nor is $\Prr 5 7 \Pr 7(1=0)$.
\item $\Prr 5 7 \Pr 8 \Pr 7(1=0)$ is $7$-stratified but $\Prr 5 7 \Pr 8 \Prr 4 7(1=0)$ is not.
\end{itemize}

We will not make use of the following lemma, but we state it to further illuminate
Definition \ref{IStratifiedDefinition}.

\begin{lemma}
Suppose $\phi$ is an $\LPA(\indset)$-formula, $i\in\omega$.
Then $\phi$ is $i$-stratified if and only if all of the following conditions hold.
\begin{enumerate}
    \item
    For all $j\in\omega$ and $\alpha\in\epom$, if $\Prr\alpha j$ occurs in $\phi$,
    then $j=i$.
    \item
    Every occurrence of $\Pr i$ in $\phi$ is inside the scope of $\Pr j$ for some
    $j\not=i$.
    \item
    $\Prr\alpha i$ never occurs in $\phi$ inside the scope of $\Pr j$, for any
    $\alpha\in\epom$ or any $j\in\omega$.
    \item
    For all $\alpha,\beta\in\epom$,
    if $\Prr\alpha i$ occurs in $\phi$ inside the scope of $\Prr\beta i$,
    then $\beta>\alpha$.
\end{enumerate}
\end{lemma}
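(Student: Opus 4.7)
The plan is to prove both directions by structural induction on $\phi$, simultaneously. The atomic, Boolean, and quantifier cases are essentially automatic, since the four conditions refer only to occurrences of operators in $\phi$ and pass transparently through the immediate subformulas, matching the corresponding clause of Definition \ref{IStratifiedDefinition}. So I would focus attention on the four operator cases.

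For the forward direction ($i$-stratified $\Rightarrow$ conditions 1--4), the only nontrivial cases are $\phi\equiv\Pr j\phi_0$ with $j\neq i$ and $\phi\equiv\Prr{\alpha}{i}\phi_0$, since by the definition $\Prr{\alpha}{j}\phi_0$ with $j\neq i$ and $\Pr i\phi_0$ are never $i$-stratified. In the first case $\phi\in\LPA(\omega)$, so conditions 1, 3, 4 hold vacuously, and every $\Pr i$ in $\phi$ sits inside the outer $\Pr j$, giving condition 2. In the second case I would invoke the inductive hypothesis on $\phi_0$ to inherit conditions 1--4 restricted to operators of $\phi_0$; the only new facts to verify are those introduced by the outer $\Prr{\alpha}{i}$ itself. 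Conditions 1 and 2 are immediate, condition 3 holds because adding an outer $\Prr{\alpha}{i}$ does not place any $\Prr{\beta}{i}$ in the scope of a new $\Pr j$ (a $\Prr{\alpha}{i}$ is not a $\Pr j$), and condition 4 for a pair (outer $\Prr{\alpha}{i}$, inner $\Prr{\gamma}{i}$ from $\phi_0$) follows from the hypothesis $\alpha>\onset(\phi_0)$.

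For the backward direction (conditions 1--4 $\Rightarrow$ $i$-stratified), I would use conditions 1 and 2 to rule out the disallowed outer operator shapes: if $\phi\equiv\Pr i\phi_0$, the outermost $\Pr i$ is not inside the scope of anything, contradicting condition 2; if $\phi\equiv\Prr{\alpha}{j}\phi_0$ with $j\neq i$, the outermost operator contradicts condition 1. In the remaining case $\phi\equiv\Pr j\phi_0$ with $j\neq i$, I must show $\phi$ is an $\LPA(\omega)$-formula; any occurrence of $\Prr{\beta}{k}$ in $\phi$ would force $k=i$ by condition 1 and then lie inside the scope of $\Pr j$, contradicting condition 3. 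Finally, for $\phi\equiv\Prr{\alpha}{i}\phi_0$, I would check that conditions 1--4 restrict to $\phi_0$ (straightforward, since adding the outer $\Prr{\alpha}{i}$ creates no new $\Pr j$-scopes) and apply the inductive hypothesis to conclude $\phi_0$ is $i$-stratified. For $\alpha>\onset(\phi_0)$, given any $\gamma\in\onset(\phi_0)$, some $\Prr{\gamma}{k}$ occurs in $\phi_0$; condition 1 forces $k=i$, and condition 4 applied to $\Prr{\alpha}{i}$ around $\Prr{\gamma}{i}$ gives $\alpha>\gamma$.

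There is no real obstacle here; the lemma is a bookkeeping exercise verifying that the recursive clauses of Definition \ref{IStratifiedDefinition} unfold to precisely the four global conditions. The only point requiring minor care is the distinction between $\Pr j$ and $\Prr{\beta}{j}$ in conditions 2 and 3: since those conditions speak about scopes of unsuperscripted $\Pr j$, wrapping a formula in $\Prr{\alpha}{i}$ never changes whether an operator occurrence is in the scope of some $\Pr j$, which is what makes the inductive step for $\Prr{\alpha}{i}\phi_0$ work cleanly in both directions.
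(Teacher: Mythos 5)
Your proof is correct: a careful structural induction on $\phi$, handling both directions, verifying the four global conditions case-by-case against the recursive clauses of Definition \ref{IStratifiedDefinition}. The paper gives no explicit proof (it is dismissed as ``Straightforward''), and your induction is exactly the straightforward argument the author had in mind.
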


\begin{proof}
Straightforward.
\end{proof}

\begin{definition}
\label{applyinghtophi}
Suppose $X\subseteq\epom$ and $h:X\to \epom$ is order preserving.
For each $\LPA(\indset)$-formula $\phi$,
define an $\LPA(\indset)$-formula $h(\phi)$ inductively as follows:
\begin{enumerate}
    \item If $\phi$ is $\neg\phi_0$, $\phi_1\rightarrow\phi_2$, or
    $\forall x\phi_0$, then $h(\phi)$ is
    $\neg h(\phi_0)$, $h(\phi_1)\rightarrow h(\phi_2)$, or
    $\forall x h(\phi_0)$, respectively.
    \item If $\phi$ is atomic or $\Pr i\phi_0$, then $h(\phi)\equiv \phi$.
    \item If $\phi$ is $\Prr\alpha i\phi_0$ where $\alpha\in X$, then
    $h(\phi)\equiv \Prr{h(\alpha)}i h(\phi_0)$.
    \item If $\phi$ is $\Prr\alpha i\phi_0$ where $\alpha\not\in X$,
    then $h(\phi)\equiv \Prr\alpha i h(\phi_0)$.
\end{enumerate}
\end{definition}

In practice, we will mainly be interested in $\phi$ when $\phi$ is $i$-stratified
for some $i$, in which case $\Prr\alpha j$ cannot occur within the scope of $\Pr k$
in $\phi$ for any $k,j$. For such $\phi$, $h(\phi)$ is simply the result of applying
$h$ to every superscript in $\phi$ that is in $X$.

For example if $X=\{1,\omega\}$, $h(1)=0$, and $h(\omega)=\omega\cdot 2+1$,
then
\[
h\left(\Prr0i(1=0)\rightarrow\Prr1i(1=0)\rightarrow\Prr{\omega} i(1=0)\right)
\,\,\,\,\equiv\,\,\,\,
\Prr0i(1=0)\rightarrow\Prr0i(1=0)\rightarrow\Prr{\omega\cdot 2+1}i(1=0).
\]

In practice, we will primarily be interested in applying
Definition \ref{applyinghtophi} in the case where $\onset(\phi)\subseteq X$.

\begin{definition}
Suppose $X\subseteq\epom$ and $h:X\to\epom$ is order preserving.
For any $\LPA(\indset)$-structure $\mathscr N$,
we define an $\LPA(\indset)$-structure $h(\mathscr N)$ that
has the same universe as $\mathscr N$,
agrees with $\mathscr N$ on $\LPA(\omega)$, and
interprets $\LPA(\indset)\backslash\LPA(\omega)$
so that
\[
\mbox{$h(\mathscr N)\models \Prr\alpha i\phi[s]$
if and only if $\mathscr N\models h(\Prr\alpha i\phi)[s]$.}
\]
\end{definition}

\begin{lemma}
\label{hcommutativitylemma}
Suppose $X\subseteq\epom$, $h:X\to\epom$ is order preserving,
and $\mathscr N$ is an $\LPA(\indset)$-structure.
For any $\LPA(\indset)$-formula $\phi$
and assignment $s$,
$h(\mathscr N)\models\phi[s]$
if and only if $\mathscr N\models h(\phi)[s]$.
\end{lemma}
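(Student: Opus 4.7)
The plan is to proceed by induction on the structural complexity of $\phi$, where I treat both $\Pr i \phi_0$ and $\Prr\alpha i \phi_0$ as one level deeper than $\phi_0$. The base case ($\phi$ atomic) is immediate: clause (2) of Definition \ref{applyinghtophi} gives $h(\phi) \equiv \phi$, and $h(\mathscr N)$ agrees with $\mathscr N$ on all of $\LPA(\omega)$, which covers the interpretation of the atomic predicates. The Boolean and quantifier cases ($\neg$, $\rightarrow$, $\forall x$) are routine: $h$ commutes with these connectives by clause (1), and since $h(\mathscr N)$ shares its universe and its first-order interpretations with $\mathscr N$, the satisfaction of such a formula is determined compositionally from its subformula(s), to which the inductive hypothesis applies directly.

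The operator cases are where the definition of $h(\mathscr N)$ is actually unpacked. For $\phi \equiv \Pr i \phi_0$, clause (2) fixes $h(\phi) \equiv \phi$, and by construction $h(\mathscr N)$ interprets $\Pr i$ exactly as $\mathscr N$ does (since $\Pr i \in \LPA(\omega)$), so $h(\mathscr N) \models \Pr i \phi_0[s]$ iff $\mathscr N \models \Pr i \phi_0[s] \equiv \mathscr N \models h(\phi)[s]$. For $\phi \equiv \Prr\alpha i \phi_0$ with $\alpha \in X$, clause (3) gives $h(\phi) \equiv \Prr{h(\alpha)}i h(\phi_0)$, and the defining clause of $h(\mathscr N)$ literally says $h(\mathscr N) \models \Prr\alpha i \phi_0[s]$ iff $\mathscr N \models h(\Prr\alpha i \phi_0)[s]$. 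The case $\alpha \notin X$ is identical, using clause (4). Thus these cases dispatch immediately.

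The only conceptual point worth highlighting is that the operator cases do \emph{not} invoke the inductive hypothesis on $\phi_0$ at all. In the base logic, an operator's interpretation is a function on whole formulas (constrained only by the three conditions in Definition \ref{baselogicdefn}), not a compositional construction from the truth value of $\phi_0$; consequently, once the definition of $h(\mathscr N)$ has specified what $\Prr\alpha i \phi_0$ means, the biconditional we want is essentially tautological. The induction is doing real work only in the logical-connective cases. So there is no genuine obstacle here; the lemma is mostly a matter of matching the clauses of Definition \ref{applyinghtophi} against the defining clauses of $h(\mathscr N)$, which is why the paper would reasonably dispatch this with a one-line proof.
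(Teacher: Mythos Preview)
Your proposal is correct and is exactly the approach the paper takes; the paper's own proof is simply ``By induction,'' and you have supplied the straightforward details. Your observation that the operator cases are immediate from the defining clause of $h(\mathscr N)$ (and do not use the inductive hypothesis) is accurate and worth noting.
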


\begin{proof}
By induction.
\end{proof}

\begin{corollary}
\label{hpreserversvalidity}
Suppose $X\subseteq\epom$ and $h:X\to\epom$ is order preserving.
For any valid $\LPA(\indset)$-formula $\phi$,
$h(\phi)$ is valid.
\end{corollary}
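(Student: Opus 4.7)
The plan is to derive this as an immediate consequence of Lemma \ref{hcommutativitylemma}; essentially all the work has already been done there. The point is that although $h(\phi)$ has modified superscripts, the relationship between $h(\phi)$ and $\phi$ at the syntactic level is mirrored by the relationship between $\mathscr{N}$ and $h(\mathscr{N})$ at the semantic level, so validity, which quantifies over all structures and all assignments, simply transfers.

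In detail, to show $h(\phi)$ is valid, I would let $\mathscr{N}$ be an arbitrary $\LPA(\indset)$-structure and $s$ an arbitrary assignment, and show $\mathscr{N}\models h(\phi)[s]$. By Lemma \ref{hcommutativitylemma}, this is equivalent to $h(\mathscr{N})\models\phi[s]$. But $h(\mathscr{N})$ is itself an $\LPA(\indset)$-structure, so the validity of $\phi$ (Definition \ref{stddefn}(9)) yields $h(\mathscr{N})\models\phi[s]$ immediately. Since $\mathscr{N}$ and $s$ were arbitrary, $h(\phi)$ is valid.

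I do not anticipate any genuine obstacle, since the corollary is really just a reformulation of Lemma \ref{hcommutativitylemma} in terms of validity. The only nontrivial step --- the inductive passage between $\mathscr{N}\models h(\phi)[s]$ and $h(\mathscr{N})\models\phi[s]$ --- has already been packaged into the lemma, and the well-definedness of the structure $h(\mathscr{N})$ (in particular, that its operator-interpretation satisfies the three requirements of Definition \ref{baselogicdefn}) is built into its construction in the preceding definition.
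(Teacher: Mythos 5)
Your proof is correct and is essentially identical to the paper's: both start with validity of $\phi$ applied to the structure $h(\mathscr N)$ and then invoke Lemma \ref{hcommutativitylemma} to transfer back to $\mathscr N\models h(\phi)[s]$. No differences in approach.
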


\begin{proof}
For any $\LPA(\indset)$-structure $\mathscr N$ and assignment $s$,
$h(\mathscr N)\models\phi[s]$ by validity, so $\mathscr N\models h(\phi)[s]$
by Lemma \ref{hcommutativitylemma}.
\end{proof}

\begin{definition}
If $X\subseteq\Ord$ and $h:X\to\Ord$, we call $h$ a \emph{covering}
if $h$ is order preserving and whenever $x,y\in X$
and $x\leq_1 y$, $h(x)\leq_1 h(y)$.
\end{definition}

\begin{definition}
Suppose $i\in\omega$.
An $\LPA(\indset)$-theory $T$ is \emph{$i$-unistratified}
if the following conditions hold:
\begin{enumerate}
\item $T$ is $i$-stratified.
\item (Uniformity) Whenever $\phi\in T$,
$X\subseteq\epom$, $\onset(\phi)\subseteq X$, and $h:X\to\epom$ is a covering,
then $h(\phi)\in T$.
\end{enumerate}
\end{definition}

\begin{definition}
If $T$ is an $\LPA(\indset)$-theory and $\alpha\in\epom$,
let $T\cap\alpha$ be the set $\{\phi\in T\,:\,\onset(\phi)\subseteq\alpha\}$ of sentences in $T$ that do not contain
any superscripts $\geq\alpha$.
\end{definition}

\begin{theorem}
\label{collapsetheorem}
(The Collapse Theorem)
Suppose $T$ is an $i$-unistratified $\LPA(\indset)$-theory.
\begin{enumerate}
\item If $n$ is a positive integer and $\onset(\phi)\subseteq\epsilon_0\cdot n$, then $T\models\phi$
if and only if $T\cap(\epsilon_0\cdot n)\models\phi$.
\item If $\alpha\leq_1\beta$ and $\onset(\phi)\subseteq\alpha$, then $T\cap\alpha\models\phi$
if and only if $T\cap\beta\models\phi$.
\end{enumerate}
\end{theorem}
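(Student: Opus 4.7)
The plan is to reduce part 1 to part 2 via compactness and Theorem \ref{blackbox}(2), and to prove part 2 itself by invoking Theorem \ref{blackbox}(3) together with the uniformity clause of an $i$-unistratified theory. The key idea is that a deduction of $\phi$ from $T\cap\beta$ can be pulled down to a deduction from $T\cap\alpha$ by applying an order isomorphism that fixes $\onset(\phi)$ pointwise and pushes the superscripts in $[\alpha,\beta)$ strictly below $\alpha$.

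\textbf{Part 1.} The $\Leftarrow$ direction is immediate since $T\cap(\epsilon_0\cdot n)\subseteq T$. For the $\Rightarrow$ direction, assume $T\models\phi$ and use Theorem \ref{completecompact}(4) to pick $\sigma_1,\ldots,\sigma_k\in T$ with $\sigma_1\rightarrow\cdots\rightarrow\sigma_k\rightarrow\phi$ valid. Choose a positive integer $N\geq n$ large enough that every superscript appearing in $\phi$ or in any $\sigma_j$ is $<\epsilon_0\cdot N$, so $T\cap(\epsilon_0\cdot N)\models\phi$. Theorem \ref{blackbox}(2) gives $\epsilon_0\cdot n\leq_1\epsilon_0\cdot N$, and since $\onset(\phi)\subseteq\epsilon_0\cdot n$, part 2 applied to $\alpha=\epsilon_0\cdot n$ and $\beta=\epsilon_0\cdot N$ yields $T\cap(\epsilon_0\cdot n)\models\phi$.

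\textbf{Part 2.} The $\Rightarrow$ direction is trivial because $T\cap\alpha\subseteq T\cap\beta$. For the $\Leftarrow$ direction, assume $T\cap\beta\models\phi$ and apply Theorem \ref{completecompact}(4) to fix $\sigma_1,\ldots,\sigma_k\in T\cap\beta$ with $\sigma_1\rightarrow\cdots\rightarrow\sigma_k\rightarrow\phi$ valid. Let $S$ be the finite set of all superscripts occurring in $\phi$ or in any $\sigma_j$, and split $S=X_0\cup Y$ with $X_0=S\cap\alpha$ and $Y=S\cap[\alpha,\beta)$. Theorem \ref{blackbox}(3), applied to $\alpha\leq_1\beta$ with these $X_0,Y$, produces a finite $\widetilde Y\subseteq\alpha$ with $X_0<\widetilde Y$ together with an $(\leq,\leq_1)$-isomorphism between $X_0\cup Y$ and $X_0\cup\widetilde Y$ that fixes $X_0$ pointwise; call the map from $X_0\cup Y$ to $X_0\cup\widetilde Y$ the covering $h$. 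Because $\onset(\sigma_j)\subseteq S\subseteq X_0\cup Y$, the uniformity clause of $i$-unistratification gives $h(\sigma_j)\in T$; and because every superscript of $h(\sigma_j)$ lies in $X_0\cup\widetilde Y\subseteq\alpha$, in fact $h(\sigma_j)\in T\cap\alpha$. Since $\onset(\phi)\subseteq X_0$ and $h$ fixes $X_0$, $h(\phi)\equiv\phi$. Applying Corollary \ref{hpreserversvalidity} to $\sigma_1\rightarrow\cdots\rightarrow\sigma_k\rightarrow\phi$ then shows that $h(\sigma_1)\rightarrow\cdots\rightarrow h(\sigma_k)\rightarrow\phi$ is valid, witnessing $T\cap\alpha\models\phi$.

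\textbf{Main obstacle.} The hard part of the argument is orchestrating the covering $h$: it has to \emph{simultaneously} fix $\onset(\phi)$, send every element of $Y$ strictly below $\alpha$, and preserve $\leq_1$ (so that it qualifies as a covering and thereby triggers uniformity). All three requirements are precisely what Theorem \ref{blackbox}(3) delivers in a single package, so the proof really rests on that ordinal-combinatorial result; the remaining work is to bundle $\onset(\phi)$ together with the superscripts of all the $\sigma_j$ into one finite input before invoking it, and then to verify that Corollary \ref{hpreserversvalidity} transports the validity of the implication chain across the substitution $\sigma_j\mapsto h(\sigma_j)$.
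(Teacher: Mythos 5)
Your proposal is correct and uses essentially the same machinery as the paper: compactness to obtain a finite deduction, Theorem \ref{blackbox}(3) to produce a covering $h$ that fixes $\onset(\phi)$ pointwise while pushing the higher superscripts below the threshold, the uniformity clause of unistratification to keep the $h(\sigma_j)$ inside the theory, and Corollary \ref{hpreserversvalidity} to transport validity. The only organizational difference is that you prove part 2 directly and then derive part 1 from it via Theorem \ref{blackbox}(2), whereas the paper proves part 1 directly (embedding the $\leq_1$ step internally) and remarks that part 2 is similar; your decomposition is a clean and arguably tidier alternative.
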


\begin{proof}
Note that since $T$ is $i$-unistratified, in particular $T$ is $i$-stratified.
We will prove (1), the proof of (2) is similar.
\item
($\Leftarrow$) Immediate since $T\cap(\epsilon_0\cdot n)\subseteq T$.

\item
($\Rightarrow$)
Assume $T\models\phi$.
By Theorem \ref{completecompact} there are $\sigma_1,\ldots,\sigma_k\in T$
such that
\[
\Phi \,\equiv\, \sigma_1\rightarrow\cdots\rightarrow \sigma_k\rightarrow\phi
\]
is valid.
Let $X=\onset(\Phi)\cap(\epsilon_0\cdot n)$, $Y=\onset(\Phi)\cap[\epsilon_0\cdot n,\infty)$, note $|X|,|Y|<\infty$.

Since $Y$ is finite, there is some integer $n'>n$ such that $Y\subseteq\epsilon_0\cdot n'$.
By Theorem \ref{blackbox} part 2, $\epsilon_0\cdot n\leq_1\epsilon_0\cdot n'$.
By Theorem \ref{blackbox} part 3, there is some $X<\widetilde Y<\epsilon_0\cdot n$ such that
$X\cup\widetilde Y\cong_{(\leq,\leq_1)}X\cup Y$.

Let $h:X\cup Y\to X\cup\widetilde Y$ be a $(\leq,\leq_1)$-isomorphism.
Since $\onset(\phi)\subseteq\epsilon_0\cdot n$, $h(\phi)=\phi$.
By Corollary \ref{hpreserversvalidity},
\[
h(\Phi)\,\equiv\, h(\sigma_1)\rightarrow\cdots\rightarrow h(\sigma_k)\rightarrow \phi
\]
is valid.
Since $T$ is $i$-unistratified, $h(\sigma_1),\ldots,h(\sigma_k)\in T$.
Finally since $\mathrm{range}(h)<\epsilon_0\cdot n$, $h(\sigma_1),\ldots,h(\sigma_k)\in T\cap(\epsilon_0\cdot n)$,
showing $T\cap(\epsilon_0\cdot n)\models \phi$.
\end{proof}

Loosely speaking, what we have done in Theorem \ref{collapsetheorem} is we have
taken a proof of $\phi$ and we have \emph{collapsed} the proof, shrinking its ordinals
by using Theorem \ref{blackbox} part 3.

\begin{definition}
For every $i\in\omega$ we define the following $\LPA(\indset)$-schema:
\begin{itemize}
\item ($i$-Collapse) $\ucl{\Prr\alpha i\phi\leftrightarrow\Prr\beta i\phi}$ whenever
$\Prr\alpha i\phi$ is $i$-stratified and $\alpha\leq_1\beta$.
\end{itemize}
\end{definition}

\begin{definition}
For any $\LPA(\indset)$-formula $\phi$, $\phi^-$ is the result of erasing 
all superscripts from $\phi$.
If $T$ is an $\LPA(\indset)$-theory, $T^-=\{\sigma^-\,:\,\sigma\in 
T\}$.
\end{definition}

For example, if $\phi$ is $\Prr{\omega}{5}(1=0)\rightarrow\Prr{\omega+1}{5}\Prr{\omega}{5}(1=0)$,
then $\phi^-$ is $\Pr{5}(1=0)\rightarrow\Pr{5}\Pr{5}(1=0)$.

\begin{lemma}
\label{verystratifiableaxioms}
If $T$ is $i$-unistratified then for every $\phi\in T$ there is some $\psi\in T$
such that $\psi$ is very $i$-stratified and $\psi^-\equiv\phi^-$.
\end{lemma}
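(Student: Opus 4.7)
The plan is to construct $\psi$ as the image of $\phi$ under a carefully chosen order-preserving map $h$ applied to its superscripts, invoking the uniformity clause in the definition of $i$-unistratified to conclude that $\psi \in T$. Specifically, given $\phi \in T$, let $X = \onset(\phi)$. Since $\phi$ is a finite formula, $X$ is finite, say $X = \{\alpha_1 < \alpha_2 < \cdots < \alpha_k\}$. Define $h \colon X \to \epom$ by $h(\alpha_j) = \epsilon_0 \cdot j$, and set $\psi \equiv h(\phi)$.

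To apply the uniformity clause, I must verify that $h$ is a covering. Order-preservation is immediate from the definition of $h$. For the $\leq_1$-preservation, suppose $\alpha_j \leq_1 \alpha_l$ for some $j, l \in \{1,\ldots,k\}$. Since $\leq_1$ refines $\leq$, we have $\alpha_j \leq \alpha_l$, hence $j \leq l$. By Theorem \ref{blackbox} part 2, $\epsilon_0 \cdot j \leq_1 \epsilon_0 \cdot l$, i.e., $h(\alpha_j) \leq_1 h(\alpha_l)$. So $h$ is indeed a covering, and since $\onset(\phi) \subseteq X$, the uniformity clause of $i$-unistratification yields $\psi = h(\phi) \in T$.

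It remains to check that $\psi$ is very $i$-stratified and that $\psi^- \equiv \phi^-$. For the first, $\psi \in T$ and $T$ is $i$-stratified, so $\psi$ is $i$-stratified; moreover $\onset(\psi) \subseteq \mathrm{range}(h) = \{\epsilon_0, \epsilon_0 \cdot 2, \ldots, \epsilon_0 \cdot k\} \subseteq \{\epsilon_0 \cdot n : n \geq 1\}$, so $\psi$ is very $i$-stratified. For the second, $h$ only modifies superscripts (by Definition \ref{applyinghtophi}), so erasing all superscripts gives the same formula as erasing superscripts from $\phi$, i.e., $\psi^- \equiv \phi^-$.

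The only substantive step is verifying the covering condition, which reduces cleanly to Theorem \ref{blackbox} part 2 thanks to the fact that the image set $\{\epsilon_0 \cdot n : n \geq 1\}$ is totally $\leq_1$-ordered in the expected way; no deeper combinatorial input from Theorem \ref{blackbox} part 3 is needed here.
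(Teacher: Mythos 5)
Your proposal is correct and matches the paper's proof essentially step for step: same choice of $X = \onset(\phi)$, same map $h(\alpha_j) = \epsilon_0\cdot j$, same appeal to Theorem \ref{blackbox} part 2 to verify the covering condition, and same use of the uniformity clause. You simply spell out the details (e.g., the verification that $\leq_1$ is preserved and that $\psi$ is very $i$-stratified) which the paper leaves as ``clearly.''
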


\begin{proof}
Let $X=\onset(\phi)=\{\alpha_1<\cdots<\alpha_n\}$, $Y=\{\epsilon_0\cdot 1,\ldots,\epsilon_0\cdot n\}$,
and define $h:X\to Y$ by $h(\alpha_j)=\epsilon_0\cdot j$.
Clearly $h$ is order preserving; by Theorem \ref{blackbox} part 2, $h$ is a covering.
Since $T$ is $i$-unistratified, $T$ contains $\psi\equiv h(\phi)$.  Clearly $\psi$ is very $i$-stratified and
$\psi^-\equiv\phi^-$.
\end{proof}

\begin{definition}
For any $\LPA(\omega)$-structure $\mathscr N$,
we define an $\LPA(\indset)$-structure $\mathscr N^-$
that has the same universe as $\mathscr N$, agrees with $\mathscr N$
on $\LPA(\omega)$,
and interprets $\LPA(\indset)\backslash\LPA(\omega)$ as follows.
For any $\LPA(\indset)$-formula $\phi$, $\alpha\in\epom$, $i\in\N$, and assignment $s$,
\[
\mbox{
$\mathscr N^- \models \Prr\alpha i\phi[s]$ if and only if $\mathscr N\models (\Prr\alpha i\phi)^-[s]$.}
\]
\end{definition}

\begin{lemma}
\label{structuregrowingmagic}
Suppose $\mathscr N$ is an $\LPA(\omega)$-structure.
For every $\LPA(\indset)$-formula $\phi$ and assignment $s$,
$\mathscr N^-\models\phi[s]$ if and only if $\mathscr N\models\phi^-[s]$.
\end{lemma}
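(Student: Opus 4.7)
The plan is to prove the equivalence by straightforward structural induction on the $\LPA(\indset)$-formula $\phi$, using the fact that $\mathscr N^-$ was tailor-made to make the operator cases work by definition. The only mildly subtle point, which I would resolve up front, is fixing a reading of how $\mathscr N^-$ interprets the old operator $\Pr i$ on formulas that contain new operators. The phrase ``agrees with $\mathscr N$ on $\LPA(\omega)$'' pins down $\mathscr N^-\models\Pr i\psi[s]$ for $\LPA(\omega)$-formulas $\psi$; for a general $\LPA(\indset)$-formula $\psi$ the intended and only sensible extension is $\mathscr N^-\models\Pr i\psi[s]$ iff $\mathscr N\models \Pr i\psi^-[s]$. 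I would state this as a preliminary unpacking so the operator induction step has a clean handle.

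The base case (atomic $\phi$) is immediate since $\phi^-\equiv\phi$ and $\mathscr N^-$ has the same first-order part as $\mathscr N$. The inductive step for $\neg\phi_0$, $\phi_1\rightarrow\phi_2$, and $\forall x\,\phi_0$ is entirely mechanical: erasure commutes with connectives and quantifiers, i.e.\ $(\neg\phi_0)^-\equiv\neg\phi_0^-$, $(\phi_1\rightarrow\phi_2)^-\equiv\phi_1^-\rightarrow\phi_2^-$, $(\forall x\,\phi_0)^-\equiv\forall x\,\phi_0^-$, and free variables are unaffected by erasure, so the inductive hypothesis on the immediate subformula(s), applied at the relevant assignments $s$ (and at $s(x|u)$ for each $u$ in the universe in the $\forall$ case), yields the equivalence.

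The two operator cases are dispatched directly from the definition of $\mathscr N^-$, not from the inductive hypothesis. If $\phi$ is $\Pr i\phi_0$ then $\phi^-\equiv\Pr i\phi_0^-$, and by the convention fixed above, $\mathscr N^-\models\Pr i\phi_0[s]$ iff $\mathscr N\models\Pr i\phi_0^-[s]$, which is $\mathscr N\models\phi^-[s]$. If $\phi$ is $\Prr{\alpha}{i}\phi_0$ then the defining equation of $\mathscr N^-$ gives $\mathscr N^-\models\Prr{\alpha}{i}\phi_0[s]$ iff $\mathscr N\models(\Prr{\alpha}{i}\phi_0)^-[s]$, and $(\Prr{\alpha}{i}\phi_0)^-\equiv\Pr i\phi_0^-\equiv\phi^-$.

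There is no real obstacle: the lemma is essentially a sanity check that erasure of superscripts commutes with semantics in $\mathscr N^-$, and $\mathscr N^-$ was defined precisely to make this so at the operator level. The only thing to be careful about is the preliminary bookkeeping item (clarifying $\mathscr N^-$'s interpretation of $\Pr i$ on formulas that mention the new operators), since without that the operator case for $\Pr i$ would not have a referent. Once that is fixed, the induction writes itself.
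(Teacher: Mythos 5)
Your proof is correct and is exactly the induction the paper has in mind — the paper's own proof consists of the two words ``By induction,'' and your write-up fleshes out precisely the structural induction being waved at. Your preliminary remark is the one thing worth flagging: the paper's definition of $\mathscr N^-$ only explicitly specifies the interpretation of $\Prr{\alpha}{i}$ and says $\mathscr N^-$ ``agrees with $\mathscr N$ on $\LPA(\omega)$,'' which strictly speaking leaves the behavior of $\Pr i$ on $\LPA(\indset)$-formulas containing new operators unspecified (since $\mathscr N$, being an $\LPA(\omega)$-structure, has no opinion on those). Your reading, $\mathscr N^-\models\Pr i\psi[s]$ iff $\mathscr N\models\Pr i\psi^-[s]$, is the only one under which the lemma holds, reduces to the paper's stipulation when $\psi$ is already an $\LPA(\omega)$-formula, and is clearly the intended one; making it explicit, as you do, is the right move, and it is also what makes the $\Pr i$ case of the induction go through by definition rather than by the inductive hypothesis (as you correctly note, neither operator case uses the IH, which is a feature of how base-logic structures interpret operator formulas as atoms).
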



\begin{proof}
By induction.
\end{proof}

\begin{corollary}
\label{minuspreservesvalidity}
If $\phi$ is a valid $\LPA(\indset)$-formula, then $\phi^-$ is a valid $\LPA(\omega)$-formula.
\end{corollary}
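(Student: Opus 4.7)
The plan is to use Lemma \ref{structuregrowingmagic} directly: that lemma gives a bridge between validity in the extended language $\LPA(\indset)$ and validity in $\LPA(\omega)$, via the structure-lifting operation $\mathscr N \mapsto \mathscr N^-$.

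Concretely, I would let $\mathscr N$ be an arbitrary $\LPA(\omega)$-structure and $s$ an arbitrary assignment into its universe, and aim to show $\mathscr N \models \phi^-[s]$. First I form the $\LPA(\indset)$-structure $\mathscr N^-$ associated with $\mathscr N$. Since $\phi$ is assumed to be a valid $\LPA(\indset)$-formula, validity applied to the specific structure $\mathscr N^-$ and the specific assignment $s$ yields $\mathscr N^- \models \phi[s]$. Now invoke Lemma \ref{structuregrowingmagic} in the direction $\mathscr N^-\models\phi[s] \Rightarrow \mathscr N\models\phi^-[s]$ to conclude $\mathscr N \models \phi^-[s]$. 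Because $\mathscr N$ and $s$ were arbitrary, $\phi^-$ is valid as an $\LPA(\omega)$-formula.

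There is really no obstacle here; the corollary is a one-line consequence of Lemma \ref{structuregrowingmagic} once one notices that every $\LPA(\omega)$-structure is of the form to which the lemma applies (and note that $s$, being an assignment into the shared universe, is simultaneously an assignment for $\mathscr N$ and for $\mathscr N^-$, since the two share their universe by construction). The only mildly subtle point is a naming one: the corollary's claim is about validity over all $\LPA(\omega)$-structures, whereas the hypothesis is about validity over all $\LPA(\indset)$-structures, so we must make sure we use the hypothesis by evaluating at the particular $\LPA(\indset)$-structure $\mathscr N^-$, not at $\mathscr N$ itself.
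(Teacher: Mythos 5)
Your proof is correct and is precisely the argument the paper intends: the paper's proof says ``Similar to the proof of Corollary \ref{hpreserversvalidity},'' and that proof is exactly the pattern you follow, with $\mathscr N^-$ and Lemma \ref{structuregrowingmagic} playing the roles of $h(\mathscr N)$ and Lemma \ref{hcommutativitylemma}.
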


\begin{proof}
Similar to the proof of Corollary \ref{hpreserversvalidity}.
\end{proof}

A converse-like statement holds for Corollary \ref{minuspreservesvalidity} as well.

\begin{lemma}
\label{validitylevator}
For any valid $\LPA(\omega)$-sentence $\phi$ and $i\in\omega$,
there is a valid very $i$-stratified $\LPA(\indset)$-sentence $\psi$
such that $\psi^-\equiv\phi$.
\end{lemma}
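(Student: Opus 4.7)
The plan is to give a canonical \emph{stratifying translation} $\phi\mapsto\widehat\phi$ from $\LPA(\omega)$-sentences to very $i$-stratified $\LPA(\indset)$-sentences with $\widehat\phi^-\equiv\phi$, and then show that $\widehat\phi$ inherits validity from $\phi$. Letting $d(\phi)$ denote the maximum $\Pr i$-nesting depth in $\phi$, I would define $\widehat\phi$ by recursion: commute with connectives and quantifiers, leave $\Pr j\phi_0$ unchanged when $j\neq i$ (which is legitimate because any $\LPA(\omega)$-formula is automatically $i$-stratified by clause~(1) of Definition~\ref{IStratifiedDefinition}), and set $\widehat{\Pr i\phi_0}\equiv\Prr{\epsilon_0\cdot(d(\phi_0)+1)}{i}\widehat{\phi_0}$. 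Since every element of $\onset(\widehat{\phi_0})$ has the form $\epsilon_0\cdot k$ with $k\leq d(\phi_0)$, clause~(4) is satisfied, and $\widehat\phi$ is visibly very $i$-stratified with $\widehat\phi^-\equiv\phi$.

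The crux of the argument is showing that every $\LPA(\indset)$-structure $\mathscr N$ induces an $\LPA(\omega)$-structure $\mathscr M$ having the same universe and first-order part, agreeing with $\mathscr N$ on each $\Pr j$ for $j\neq i$, and with $\Pr i$ defined by declaring $\mathscr M\models\Pr i\phi_0[s]$ to hold iff $\mathscr N\models\Prr{\epsilon_0\cdot(d(\phi_0)+1)}{i}\widehat{\phi_0}[s]$. The main obstacle I anticipate is verifying that this definition actually yields a legitimate $\LPA(\omega)$-structure, i.e., that the three conditions of Definition~\ref{baselogicdefn} hold for $\Pr i$ in $\mathscr M$. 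Each condition reduces, via the corresponding condition on $\Prr{\alpha}{i}$ in $\mathscr N$, to a syntactic compatibility of $d$ and $\widehat{\,}$: namely $\FV(\widehat{\phi_0})=\FV(\phi_0)$, invariance of $d$ under alphabetic variance and under substitution of free variables, the fact that $\widehat{\phi_0}$ and $\widehat{\phi_0'}$ are alphabetic variants when $\phi_0,\phi_0'$ are, and $\widehat{\phi_0(x|y)}\equiv\widehat{\phi_0}(x|y)$ when $y$ is substitutable for $x$ in $\phi_0$. These are all routine inductions on $\phi_0$.

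Once $\mathscr M$ is established as a legitimate $\LPA(\omega)$-structure, a further straightforward induction on $\phi$ yields the semantic correspondence $\mathscr N\models\widehat\phi[s]$ iff $\mathscr M\models\phi[s]$: atomic formulas are trivial, connectives and quantifiers commute through by the inductive hypothesis, and the two operator cases hold directly by the definition of $\mathscr M$ applied to $\phi_0$. Assuming $\phi$ is a valid $\LPA(\omega)$-sentence, $\mathscr M\models\phi[s]$ for every assignment $s$, so $\mathscr N\models\widehat\phi[s]$ for every $s$; since $\mathscr N$ was arbitrary, $\widehat\phi$ is a valid, very $i$-stratified $\LPA(\indset)$-sentence with $\widehat\phi^-\equiv\phi$, as required.
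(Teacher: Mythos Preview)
Your proposal is correct and takes essentially the same approach as the paper: you define a canonical stratifying map $\widehat{\phi}$ (the paper calls it $\phi^+$, and it coincides with the $i$-veristratifier of Definition~\ref{stratifierdefn}), then build an $\LPA(\omega)$-structure $\mathscr M$ from an arbitrary $\LPA(\indset)$-structure $\mathscr N$ so that $\mathscr N\models\widehat\phi[s]$ iff $\mathscr M\models\phi[s]$, and transfer validity. Your choice of superscript $\epsilon_0\cdot(d(\phi_0)+1)$ based on $\Pr i$-nesting depth is in fact the same as the paper's $\min\{m:\epsilon_0\cdot m>\onset(\phi_0^+)\}$ once one observes that $\onset(\widehat{\phi_0})\subseteq\{\epsilon_0\cdot k:k\le d(\phi_0)\}$ with equality of maxima; you are also more careful than the paper in explicitly checking the three structural conditions of Definition~\ref{baselogicdefn} for the induced interpretation of $\Pr i$.
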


\begin{proof}
  Let $\psi\mapsto\psi^+$ be the function taking $\LPA(\omega)$-formulas to
  $\LPA(\indset)$-formulas defined as follows.
  \begin{enumerate}
    \item If $\psi$ is atomic, or of the form $\Pr j\psi_0$ with $j\neq i$,
    then $\psi^+\equiv\psi$.
    \item If $\psi$ is $\Pr i\psi_0$, then $\psi^+ \equiv \Prr {\epsilon_0\cdot n} i \psi_0^+$,
    where $n=\min\{m\in\N\,:\,\epsilon_0\cdot m > \onset(\psi_0^+)\}$.
    \item If $\psi$ is $\neg\psi_0$, $\psi_0\rightarrow\psi_1$, or $\forall x \psi_0$,
    then $\psi^+$ is $\neg\psi_0^+$, $\psi_1^+\rightarrow\psi_2^+$, or $\forall x \psi_0^+$,
    respectively.
  \end{enumerate}
  It is straightforward to show $\phi^+$ is very $i$-stratified.
  We claim $\phi^+$ is valid. Let $\mathscr M$ be any $\LPA(\indset)$-structure,
  we will show $\mathscr M\models\phi^+$.  Let $\mathscr M^+$ be the $\LPA(\omega)$-structure
  with the same universe as $\mathscr M$, which agrees with $\mathscr M$ on the interpretation
  of arithmetic and of $\Pr j$ for $j\not=i$, and which interprets $\Pr i$ as follows:
  \[
    \mbox{
      $\mathscr M^+\models \Pr i \psi[s]$ if and only if $\mathscr M\models (\Pr i\psi)^+[s]$.
    }
  \]
  Since $\phi$ is valid, $\mathscr M^+\models\phi$.
  It follows that $\mathscr M\models\phi^+$.
\end{proof}

\begin{definition}
\label{stratschemasdefn}
Let $i\in\omega$.
We define the following $\LPA(\indset)$-schemas.
\begin{itemize}
\item ($i$-Strativalidity) $\ucl{\Prr\alpha i\phi}$ whenever $\phi$ is a valid $\LPA(\indset)$-formula
and $\Prr\alpha i\phi$ is $i$-stratified.
\item ($i$-Stratideduction) $\ucl{\Prr\alpha i(\phi\rightarrow\psi)\rightarrow\Prr\alpha i\phi\rightarrow \Prr\alpha i\psi}$
whenever this formula is $i$-stratified.
\end{itemize}
\end{definition}

\begin{definition}
\label{straticloseddefn}
An $\LPA(\indset)$-theory $T$ is \emph{$i$-straticlosed} if
the following conditions hold:
\begin{enumerate}
\item $T$ is $i$-unistratified.
\item $T$ includes $i$-Strativalidity, $i$-Stratideduction and $i$-Collapse.
\item For every $\phi\in T$, if $\Prr\alpha i\phi$ is $i$-stratified then
$\Prr\alpha i\phi\in T$.
\end{enumerate}
A family $\T=(T_i)_{i\in\omega}$ is \emph{straticlosed} if each $T_i$ is
$i$-straticlosed.
\end{definition}

The following theorem serves as an omnibus of results from Section 5 of \cite{carlson2000}.

\begin{theorem}
\label{proofstratification}
(Proof Stratification)
Suppose $T$ is an $i$-straticlosed $\LPA(\indset)$-theory.
Then:
\begin{enumerate}
\item Whenever $T\cap\alpha\models\phi$,
$\Prr\alpha i\phi$ is an $i$-stratified sentence,
and $\beta>\alpha$, then $T\cap\beta\models\Prr\alpha i\phi$.
\item For any very $i$-stratified $\LPA(\indset)$-sentences $\rho$ and 
$\sigma$, if $\rho^-\equiv\sigma^-$
then $T\models \rho\leftrightarrow\sigma$.
\item
For any very $i$-stratified $\LPA(\indset)$-sentence $\phi$,
$T\models\phi$ if and only if $T^-\models\phi^-$.
\end{enumerate}
\end{theorem}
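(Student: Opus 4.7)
Part (1) follows from compactness plus the closure properties of $T$. By Theorem \ref{completecompact}, $T\cap\alpha\models\phi$ yields $\sigma_1,\ldots,\sigma_n\in T\cap\alpha$ such that $\sigma_1\rightarrow\cdots\rightarrow\sigma_n\rightarrow\phi$ is valid. Since each $\onset(\sigma_j)\subseteq\alpha$ and $\alpha>\onset(\phi)$ (the latter because $\Prr\alpha i\phi$ is $i$-stratified), the formula $\Prr\alpha i(\sigma_1\rightarrow\cdots\rightarrow\sigma_n\rightarrow\phi)$ is $i$-stratified; $i$-Strativalidity puts it in $T$, with onset contained in $\alpha+1\subseteq\beta$, so it lies in $T\cap\beta$. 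Iterated $i$-Stratideduction then yields $\Prr\alpha i\sigma_1\rightarrow\cdots\rightarrow\Prr\alpha i\sigma_n\rightarrow\Prr\alpha i\phi$ inside $T\cap\beta$, and each $\Prr\alpha i\sigma_j$ itself lies in $T\cap\beta$ by condition 3 of Definition \ref{straticloseddefn}. Modus ponens delivers $T\cap\beta\models\Prr\alpha i\phi$.

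For part (2), I would prove the strengthened statement that some $\delta\in\epom$ satisfies $T\cap\delta\models\ucl{\rho\leftrightarrow\sigma}$, by induction on the complexity of the common formula $\rho^-\equiv\sigma^-$. When the outermost connective is atomic or $\Pr j$ with $j\neq i$, very $i$-stratification forces $\rho\equiv\sigma$, so the claim is trivial; the $\neg$, $\rightarrow$, and $\forall$ cases reduce routinely to the inductive hypothesis. The only substantial case is when the top connective is $\Pr i$, i.e., $\rho\equiv\Prr\alpha i\rho_1$ and $\sigma\equiv\Prr\beta i\sigma_1$ with $\alpha,\beta\in\{\epsilon_0\cdot n:n\geq 1\}$ and $\rho_1^-\equiv\sigma_1^-$. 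The inductive hypothesis supplies $\delta_0$ with $T\cap\delta_0\models\ucl{\rho_1\leftrightarrow\sigma_1}$; enlarging $\delta_0$ if necessary, I take $\delta_0=\epsilon_0\cdot k$ strictly above $\alpha$, $\beta$, and every superscript of $\rho_1$ or $\sigma_1$. Part (1), combined with routine modal manipulations using $i$-Strativalidity and $i$-Stratideduction (to push $\Prr{\delta_0}i$ under universal quantifiers and to distribute it across biconditionals), yields $T\cap(\delta_0+1)\models\ucl{\Prr{\delta_0}i\rho_1\leftrightarrow\Prr{\delta_0}i\sigma_1}$. Since $\alpha,\beta\leq_1\delta_0$ by Theorem \ref{blackbox} part 2, the $i$-Collapse axioms $\ucl{\Prr\alpha i\rho_1\leftrightarrow\Prr{\delta_0}i\rho_1}$ and $\ucl{\Prr\beta i\sigma_1\leftrightarrow\Prr{\delta_0}i\sigma_1}$ lie in $T\cap(\delta_0+1)$, and chaining the three biconditionals produces $T\cap(\delta_0+1)\models\ucl{\rho\leftrightarrow\sigma}$.

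Part (3) reduces to parts (1) and (2) together with the two lifting lemmas already in the excerpt. The forward direction is immediate from compactness and Corollary \ref{minuspreservesvalidity}: $T$-witnesses for $\phi$ map under $(\cdot)^-$ to $T^-$-witnesses for $\phi^-$. For the converse, compactness supplies $\tau_1,\ldots,\tau_n\in T^-$ with $\tau_1\rightarrow\cdots\rightarrow\tau_n\rightarrow\phi^-$ valid. Lemma \ref{verystratifiableaxioms} promotes each $\tau_j$ to a very $i$-stratified $\sigma_j\in T$ with $\sigma_j^-\equiv\tau_j$, and Lemma \ref{validitylevator} lifts the valid implication to a valid very $i$-stratified sentence $\Psi\equiv\Psi_1\rightarrow\cdots\rightarrow\Psi_n\rightarrow\Phi$ with $\Psi_j^-\equiv\tau_j$ and $\Phi^-\equiv\phi^-$. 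Part (2) identifies $\Psi_j$ with $\sigma_j$ and $\Phi$ with $\phi$ modulo $T$-equivalence; combining with the validity of $\Psi$ and modus ponens yields $T\models\phi$.

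The technical heart is the inductive step of part (2). The two outer superscripts $\alpha$ and $\beta$ need not be $\leq_1$-comparable to each other, and neither need exceed the superscripts sitting inside $\rho_1$ and $\sigma_1$, so $i$-Collapse cannot be applied directly. The fix is to introduce a common intermediate level $\delta_0=\epsilon_0\cdot k$ lying above every relevant ordinal: Theorem \ref{blackbox} part 2 then makes $i$-Collapse available in both directions, while part (1) supplies the necessitation step that lifts the inductive hypothesis past the outermost $\Prr{\cdot}i$. Without Carlson's guarantee that the $\epsilon_0\cdot k$ form a $\leq_1$-chain, this induction would not go through.
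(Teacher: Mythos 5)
Your part (1) and part (3) track the paper's argument closely and are correct: part (1) is the paper's Claim~1, and part (3) is the paper's Claims~3--4, modulo the same appeals to compactness, Corollary \ref{minuspreservesvalidity}, Lemma \ref{validitylevator}, and Lemma \ref{verystratifiableaxioms}. Strengthening part (2) to a formula version with universal closures is a reasonable clarification---the $\forall$ case needs it---and your choice of a common intermediate level $\delta_0=\epsilon_0\cdot k$ together with Theorem \ref{blackbox} part 2 and $i$-Collapse is exactly the paper's mechanism.

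The gap is in the $\Prr\alpha i$ case of part (2), specifically at the phrase ``push $\Prr{\delta_0}i$ under universal quantifiers.'' Your route applies part (1) to the sentence $\ucl{\rho_1\leftrightarrow\sigma_1}$, obtaining $T\cap(\delta_0+1)\models\Prr{\delta_0}i\ucl{\rho_1\leftrightarrow\sigma_1}$, and then tries to pass to $\ucl{\Prr{\delta_0}i\rho_1\leftrightarrow\Prr{\delta_0}i\sigma_1}$. Getting the quantifiers out from under the modal operator is a Barcan-type inference, $\Prr{\delta_0}i\forall\bar x\,\psi\rightarrow\forall\bar x\,\Prr{\delta_0}i\psi$, and this is not valid in the base logic: the interpretation of an operator in Definition \ref{baselogicdefn} is an essentially arbitrary function of the formula and the assignment, subject only to variable-independence, alphabetic variance, and variable substitution. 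Nor is Barcan derivable from $i$-Strativalidity and $i$-Stratideduction (those give K plus necessitation, which does not prove Barcan). So that step does not go through as stated.

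The repair is to never put the universal closure inside the modal operator. Starting from $T\cap\delta_0\models\ucl{\rho_1\leftrightarrow\sigma_1}$, compactness gives sentences $\tau_1,\ldots,\tau_k\in T\cap\delta_0$ with $\tau_1\rightarrow\cdots\rightarrow\tau_k\rightarrow(\rho_1\leftrightarrow\sigma_1)$ valid (the biconditional is an open formula). Then $i$-Strativalidity puts $\ucl{\Prr{\delta_0}i(\tau_1\rightarrow\cdots\rightarrow\tau_k\rightarrow(\rho_1\leftrightarrow\sigma_1))}$ into $T\cap(\delta_0+1)$, iterated $i$-Stratideduction peels off the $\tau_j$ (each $\Prr{\delta_0}i\tau_j$ is a sentence in $T$ by condition 3 of Definition \ref{straticloseddefn}), and you land at $T\models\ucl{\Prr{\delta_0}i(\rho_1\leftrightarrow\sigma_1)}$, with the closure outside the modal throughout. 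Another round of $i$-Strativalidity plus $i$-Stratideduction distributes the biconditional, giving $T\models\ucl{\Prr{\delta_0}i\rho_1\leftrightarrow\Prr{\delta_0}i\sigma_1}$; $i$-Collapse then finishes as you describe. This is precisely the role of the paper's ``Claim~0,'' which your proposal omits and which is the device that makes Barcan unnecessary.
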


\begin{proof}
Note that since $T$ is $i$-straticlosed, in particular $T$
is $i$-unistratified and hence, $i$-stratified.

\item
\claim0
Any time $T\models \Prr\alpha i(\rho\leftrightarrow\sigma)$
and this is $i$-stratified, $T\models \Prr\alpha i\rho
\leftrightarrow \Prr\alpha i\sigma$.

\item
Assume the hypotheses.  By $i$-Strativalidity,
$T\models \Prr\alpha i((\rho\leftrightarrow\sigma)\rightarrow
(\rho\rightarrow\sigma))$.
By $i$-Stratideduction,
\begin{align*}
&T\models \Prr\alpha i((\rho\leftrightarrow\sigma)
\rightarrow(\rho\rightarrow\sigma))\rightarrow
\Prr\alpha i(\rho\leftrightarrow\sigma)\rightarrow
\Prr\alpha i(\rho\rightarrow\sigma)\\
\mbox{and }
&T\models \Prr\alpha i(\rho\rightarrow\sigma)
\rightarrow\Prr\alpha i\rho\rightarrow\Prr\alpha i\sigma.
\end{align*}
It follows that $T\models \Prr\alpha i\rho\rightarrow\Prr\alpha i\sigma$.
The reverse implication is similar.

\item
\claim1
If $T\cap\alpha\models \phi$, $\Prr\alpha i\phi$ is an $i$-stratified sentence,
and $\beta>\alpha$,
then $T\cap\beta\models\Prr\alpha i\phi$.

\item
Given $T\cap\alpha\models\phi$, there are 
$\sigma_1,\ldots,\sigma_n\in T\cap\alpha$ such that
$\sigma_1\rightarrow\cdots\rightarrow\sigma_n\rightarrow\phi$
is valid.
By instances of $i$-Strativalidity and $i$-Stratideduction
contained in $T\cap\beta$,
$T\cap\beta\models \Prr\alpha i\phi$.

\item
\claim2
If $\rho$ and $\sigma$ are very $i$-stratified $\LPA(\indset)$-sentences 
and $\rho^-\equiv\sigma^-$, then $T\models\rho\leftrightarrow\sigma$.

\item
By induction on $\rho$.  Note that $\rho$ is not of the form 
$\Prr\alpha j\rho_0$ (with $j\not=i$), as that is not $i$-stratified.
If $\rho$ is $\Pr j\rho_0$ then $\rho\equiv\rho^-\equiv\sigma^-\equiv\sigma$ and
the claim is immediate.

The only nontrivial remaining case is when $\rho$ is $\Prr\alpha i\rho_0$.
Since $\rho$ is very $i$-stratified, this implies $\alpha=\epsilon_0\cdot 
n$ (some positive integer $n$) and $\rho_0$
is very $i$-stratified.
Since $\sigma^-\equiv\rho^-$
and $\sigma$ is very stratified,
this implies $\sigma\equiv \Prr{\epsilon_0\cdot m}i\sigma_0$
for some positive integer $m$ and very $i$-stratified $\sigma_0$
with $\sigma^-_0\equiv\rho^-_0$.
Assume $m\leq n$, the other case is similar.

By induction, $T\models\rho_0\leftrightarrow\sigma_0$.
By compactness, there is a natural $\ell\geq n$
such that $T\cap 
(\epsilon_0\cdot\ell)\models\rho_0\leftrightarrow\sigma_0$.
By Claim 1, $T\models 
\Prr{\epsilon_0\cdot\ell}i(\rho_0\leftrightarrow\sigma_0)$;
Claim 0 then gives $T\models \Prr{\epsilon_0\cdot\ell}i\rho_0
\leftrightarrow \Prr{\epsilon_0\cdot\ell}i\sigma_0$.
The claim now follows since $T$ contains $i$-Collapse
and $\epsilon_0\cdot m\leq_1\epsilon_0\cdot n\leq_1 \epsilon_0\cdot \ell$ (Theorem \ref{blackbox} part 2).

\item
\claim3
If $\phi$ is an $i$-stratified $\LPA(\indset)$-sentence
and $T\models\phi$, then $T^-\models\phi^-$.

\item
By compactness, find 
$\sigma_1,\ldots,\sigma_n\in T$ such that
$\sigma_1\rightarrow\cdots\rightarrow\sigma_n\rightarrow\phi$
is valid.
By Corollary \ref{minuspreservesvalidity},
so is $\sigma^-_1\rightarrow\cdots\rightarrow\sigma^-_n\rightarrow\phi^-$,
witnessing $T^-\models\phi^-$.

\item
\claim4
If $\phi$ is a very $i$-stratified $\LPA(\indset)$-sentence
and $T^-\models\phi^-$, then $T\models\phi$.

\item
By compactness,
there is a valid sentence
\[
\Phi\,\equiv\,\sigma^-_1\rightarrow\cdots\rightarrow\sigma^-_n\rightarrow\phi^-\]
where each $\sigma_j\in T$.
By Lemma \ref{validitylevator},
there is a valid very $i$-stratified $\LPA(\indset)$-sentence $\Psi$
such that $\Psi^-\equiv\Phi$.
And because $\Psi^-\equiv\Phi$, this implies
\[
\Psi\,\equiv\,\sigma^*_1\rightarrow\cdots\rightarrow\sigma^*_n\rightarrow\phi^*
\]
where each $(\sigma^*_j)^-\equiv\sigma^-_j$, $(\phi^*)^-\equiv\phi^-$,
and $\sigma^*_1,\ldots,\sigma^*_n,\phi^*$ are very $i$-stratified.

By Lemma \ref{verystratifiableaxioms},
there are very $i$-stratified
$\sigma^{**}_1,\ldots,\sigma^{**}_n\in T$ with each $(\sigma^{**}_j)^-\equiv \sigma^-_j\equiv (\sigma^*_j)^-$.
By Claim 2, $T\models\phi^*\leftrightarrow\phi$, and for $j=1,\ldots,n$, $T\models \sigma^{**}_j\leftrightarrow \sigma^*_j$.
Thus
\[
T\models (\sigma^{**}_1\rightarrow\cdots\rightarrow\sigma^{**}_n\rightarrow\phi)\leftrightarrow\Psi,
\]
and since $\Psi$ is valid and the $\sigma^{**}_j\in T$, this shows $T\models\phi$.
\end{proof}

\begin{definition}
\label{stratdefn}
If $\T=(T_i)_{i\in\omega}$ is a straticlosed family of $\LPA(\indset)$-theories,
its
\emph{stratification}, written $\str{\T}$,
is the family $\str{\T}=(S_i)_{i\in\indset}$, where for every $i\in\omega$,
$S_i=T^-_i$ and $\forall\alpha\in\epom$, $S_{(\alpha,i)}=T_i\cap\alpha$.
\end{definition}

\begin{theorem}
\label{stratificationtheorem}
(The Stratification Theorem)
Suppose $\T=(T_i)_{i\in\omega}$ is a straticlosed family of $\LPA(\indset)$-theories.
For any $i\in\omega$, any very $i$-stratified 
$\LPA(\indset)$-formula $\phi$, and any assignment $s$,
$\M_{\str{\T}}\models\phi[s]$
if and only if $\M_{\str{\T}}\models\phi^-[s]$.
\end{theorem}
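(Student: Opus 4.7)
The plan is to proceed by induction on the complexity of the very $i$-stratified formula $\phi$, with nearly all the work concentrated in a single operator case. If $\phi$ is atomic then $\phi \equiv \phi^-$ and the biconditional is immediate; if $\phi$ has shape $\neg\phi_0$, $\phi_1 \rightarrow \phi_2$, or $\forall x\, \phi_0$, then clauses (5)--(6) of Definition \ref{IStratifiedDefinition} ensure that the immediate subformulas remain very $i$-stratified, and the induction hypothesis plus the standard commutation of $(-)^-$ with the Boolean and quantifier clauses closes the case.

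For the operator cases I would first dispose of the trivial ones. Because $\phi$ is $i$-stratified, clauses (2) and (3) of Definition \ref{IStratifiedDefinition} rule out $\phi \equiv \Prr \alpha j \phi_0$ with $j \neq i$ and $\phi \equiv \Pr i \phi_0$. If $\phi \equiv \Pr j \phi_0$ with $j \neq i$, then clause (1) forces $\phi$ to be a plain $\LPA(\omega)$-formula, so $\phi \equiv \phi^-$ and there is nothing to show. This leaves the substantive case $\phi \equiv \Prr \alpha i \psi$, where very $i$-stratification forces $\alpha = \epsilon_0 \cdot n$ for some positive integer $n$, $\psi$ itself very $i$-stratified, and $\onset(\psi) \subseteq \alpha$.

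In this case, unpacking Definitions \ref{intendedstructdefn} and \ref{stratdefn} gives
\[
\M_{\str{\T}} \models \Prr \alpha i \psi\, [s] \text{ iff } T_i \cap \alpha \models \psi^s, \qquad \M_{\str{\T}} \models \Pr i \psi^-\, [s] \text{ iff } T_i^- \models (\psi^-)^s.
\]
Since substituting numerals for free variables commutes with superscript erasure, $(\psi^-)^s \equiv (\psi^s)^-$; moreover $\psi^s$ is still a very $i$-stratified sentence whose superscripts lie in $\epsilon_0 \cdot n$. Two invocations then close the argument: the Collapse Theorem (Theorem \ref{collapsetheorem}, part 1) yields $T_i \cap \alpha \models \psi^s$ iff $T_i \models \psi^s$, and Proof Stratification (Theorem \ref{proofstratification}, part 3), applied to the $i$-straticlosed theory $T_i$ and the very $i$-stratified sentence $\psi^s$, yields $T_i \models \psi^s$ iff $T_i^- \models (\psi^s)^-$. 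Chaining these produces the desired biconditional.

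The main obstacle is organizational rather than conceptual: at each step one must verify that the formulas being manipulated remain very $i$-stratified (so that the induction hypothesis and Theorem \ref{proofstratification} actually apply), and track the interplay among the substitution $(-)^s$, the erasure $(-)^-$, and the sentence/formula distinction. Beyond the Collapse and Proof Stratification theorems already in place, no new machinery is needed.
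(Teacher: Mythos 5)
Your proof is correct and follows essentially the same route as the paper's own proof: induction on $\phi$, with the sole substantive case $\phi \equiv \Prr{\alpha}{i}\psi$ handled by unwinding Definition \ref{stratdefn}, noting $(\psi^-)^s \equiv (\psi^s)^-$, and chaining the Collapse Theorem with Proof Stratification Part 3. You spell out the trivial and impossible cases more explicitly than the paper does, but the key argument is identical.
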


\begin{proof}
By induction on $\phi$.  The only nontrivial case is
when $\phi$ is $\Prr\alpha i\psi$.
Since $\phi$ is very $i$-stratified, $\psi$ is very $i$-stratified and we may write
$\alpha=\epsilon_0\cdot n$
for some positive integer $n$,
$\onset(\psi)\subseteq\epsilon_0\cdot n$.
The following are equivalent.
\begin{align*}
\M_{\str{\T}} &\models \Prr{\epsilon_0\cdot n}i\psi[s]\\
T_i\cap(\epsilon_0\cdot n) &\models \psi^s
  &\mbox{(Definition of $\M_{\str{\T}}$)}\\
T_i &\models \psi^s
  &\mbox{(Theorem \ref{collapsetheorem})}\\
T^-_i &\models (\psi^s)^-
  &\mbox{(Theorem \ref{proofstratification})}\\
T^-_i &\models (\psi^-)^s
  &\mbox{(Clearly $(\psi^s)^-\equiv(\psi^-)^s$)}\\
\M_{\str{\T}} &\models \Pr i\psi^-[s].
  &\mbox{(Definition of $\M_{\str{\T}}$)}
\end{align*}
\end{proof}

\section{Stratifiers}
\label{stratifiersection}

In order to apply theorems from the previous section, it is necessary to work
with families $\T=(T_i)_{i\in\omega}$ where each $T_i$ is $i$-stratified.
If we want $T^-_i$ to (locally) express the truthfulness of $T^-_j$, we cannot simply
add a schema like $\ucl{\Pr j\phi\rightarrow\phi}$ to $T_i$, because this is not
necessarily $i$-stratified:  for example, the particular instance
$\Pr j\Pr i(1=0) \rightarrow \Pr i(1=0)$
is not $i$-stratified.  But neither is, say, $\Pr j\Prr\alpha i(1=0)\rightarrow \Prr\alpha i(1=0)$,
where $\Prr\alpha i$ occurs within the scope of $\Pr j$.
We will use a schema $\ucl{\Pr j\phi\rightarrow\phi^+}$,
where $\bullet^+$ varies over what we call $i$-stratifiers.

\begin{definition}
\label{stratifierdefn}
Suppose $X\subseteq\epom$, $|X|=\infty$, and $i\in\omega$.
The \emph{$i$-stratifier given by $X$}
is the function $\phi\mapsto\phi^+$ taking $\LPA(\omega)$-formulas to $\LPA(\indset)$-formulas
as follows.
\begin{enumerate}
\item If $\phi$ is atomic or of the form $\Pr j\phi_0$ with $j\not=i$, then $\phi^+\equiv\phi$.
\item If $\phi$ is $\Pr i\phi_0$ then $\phi^+\equiv \Prr\alpha i\phi^+_0$
where
$\alpha=\min\{x\in X\,:\,x>\onset(\phi^+_0)\}$.
\item If $\phi$ is $\neg\psi$, $\psi\rightarrow\rho$, or $\forall x\psi$, then
$\phi^+$ is $\neg\psi^+$, $\psi^+\rightarrow\rho^+$ or $\forall x\psi^+$, respectively.
\end{enumerate}
By an \emph{$i$-stratifier} we mean an $i$-stratifier given by some $X$.
By the \emph{$i$-veristratifier} we mean the $i$-stratifier
given by $X=\{\epsilon_0\cdot1,\epsilon_0\cdot2,\ldots\}$.
\end{definition}

For example, if $\bullet^+$ is the $i$-veristratifier and $j\not=i$ then
\[(\Pr j\Pr i(1=0)\rightarrow \Pr i\Pr i(1=0))^+ \,\,\,\,\equiv\,\,\,\,
\Pr j\Pr i(1=0)\rightarrow\Prr{\epsilon_0\cdot2}i\Prr{\epsilon_0} i(1=0).\]

\begin{lemma}
\label{stratifiermagic}
Suppose $Z\subseteq\epom$,
$h:Z\to \epom$ is order preserving, $i\in\omega$,
and $\bullet^+$ is an $i$-stratifier.
For any $\LPA(\omega)$-formula $\theta$ with $\onset(\theta^+)\subseteq Z$,
there is a computable $i$-stratifier $\bullet^*$ with $\theta^*\equiv h(\theta^+)$.
\end{lemma}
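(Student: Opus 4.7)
My plan is to construct $\bullet^*$ as the $i$-stratifier given by a set $X^*$ obtained from the defining set $X$ of $\bullet^+$ by pushing the relevant initial segment of $X$ forward through $h$ and then attaching a computable cofinal tail in $\epom$. The key preliminary observation, proved by a short induction on $\phi$ from Definition \ref{stratifierdefn}, is that for any $\LPA(\omega)$-formula $\phi$, the set $\onset(\phi^+)$ is an initial segment of $X$. More precisely, if $N(\phi)$ denotes the maximum depth of $\Pr i$-nesting in $\phi$ occurring outside the scope of any $\Pr j$ with $j \neq i$, then $\onset(\phi^+)$ consists of the $N(\phi)$ smallest elements of $X$. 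In particular, I would enumerate $\onset(\theta^+)$ in increasing order as $x_1 < \cdots < x_n$ with $n = N(\theta)$; each $x_k$ lies in $Z$ by hypothesis.

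Since $h$ is order preserving, $h(x_1) < \cdots < h(x_n)$ in $\epom$. I will then set
\[
X^* \;=\; \{h(x_1),\ldots,h(x_n)\} \cup \{\epsilon_0 \cdot k \,:\, k \in \N,\ \epsilon_0 \cdot k > h(x_n)\},
\]
which is computable, infinite, and cofinal in $\epom$, so the $i$-stratifier $\bullet^*$ given by $X^*$ will be a total computable function on $\LPA(\omega)$-formulas. By construction, the first $n$ elements of $X^*$ in increasing order are exactly $h(x_1),\ldots,h(x_n)$.

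To obtain $\theta^* \equiv h(\theta^+)$, I will prove the stronger statement that $\phi^* \equiv h(\phi^+)$ for every $\LPA(\omega)$-formula $\phi$ with $N(\phi) \leq n$, by induction on $\phi$. The atomic case and the $\Pr j\phi_0$ case (for $j \neq i$) will be immediate because both stratifiers leave such formulas untouched and $h$ does not modify them. The cases for $\neg$, $\rightarrow$, and $\forall x$ will follow by applying the inductive hypothesis to the immediate subformulas. The main case is $\phi = \Pr i\phi_0$: by induction $\phi_0^* \equiv h(\phi_0^+)$, so $\onset(\phi_0^*) = h(\onset(\phi_0^+))$ is the initial segment of $X^*$ of size $N(\phi_0)$; hence the superscript chosen by $\bullet^*$ at this $\Pr i$ is the $(N(\phi_0)+1)$-st element of $X^*$, while the superscript chosen by $\bullet^+$ is the $(N(\phi_0)+1)$-st element of $X$. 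Since $N(\phi_0)+1 = N(\phi) \leq n$, the construction of $X^*$ identifies these via $h$, giving $(\Pr i\phi_0)^* \equiv h((\Pr i\phi_0)^+)$.

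The main obstacle will be the combinatorial bookkeeping that keeps the two stratifier recursions synchronized, especially the invariant $N(\phi_0)+1 \leq n$ used both to ensure the required superscripts lie in $Z$ (so that $h$ applies) and to ensure $X^*$ agrees with $h$ on exactly the relevant initial segment. This monotonicity holds because the stratifier never descends under a $\Pr j$ with $j \neq i$, so every $\Pr i\phi_0$ visited while computing $\theta^+$ satisfies $N(\Pr i\phi_0) \leq N(\theta) = n$.
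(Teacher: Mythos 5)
Your proof is correct and follows essentially the same approach as the paper's: define the new stratifier by the set obtained from the $h$-image of $\onset(\theta^+)$ together with an infinite computable tail lying above that image, then verify the identity $\theta^* \equiv h(\theta^+)$ by induction on (exposed) subformulas of $\theta$. The paper condenses this into two sentences, choosing the tail $\{\alpha\in\epom : \alpha > X_0\}$ where $X_0 = h(\onset(\theta^+))$, while you choose the multiples of $\epsilon_0$ above $h(x_n)$ and make explicit the underlying invariant that $\onset(\phi^+)$ is always an initial segment of the defining set of $\bullet^+$ of size equal to the relevant $\Pr i$-nesting depth; both tail choices work and the argument is the same.
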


\begin{proof}
Let $X_0=\{h(\alpha)\,:\,\alpha\in\onset(\theta^+)\}$,
let $X=X_0\cup\{\alpha\in\epom\,:\,\alpha>X_0\}$,
and let $\bullet^*$ be the $i$-stratifier given by $X$.
By induction, for every subformula $\theta_0$ of $\theta$, $\theta^*_0\equiv h(\theta^+_0)$.
\end{proof}

\begin{definition}
By a \emph{stratifier-set}, we mean a finite set
\[
    I=\{\bullet^{+_1},\ldots,\bullet^{+_k}\}
\]
where each $\bullet^{+_p}$ is an $i_p$-stratifier for some $i_p\in\omega$,
and $i_1,\ldots,i_k$ are distinct.
With $I$ as above, we write $\indices(I)$ for $\{i_1,\ldots,i_k\}$.
We say $I$ is \emph{computable} if each $\bullet^{+_p}$ is computable.
\end{definition}

For example, if $\bullet^{+_1}$ is a $1$-stratifier,
$\bullet^{+_2}$ is a $5$-stratifier, and $\bullet^{+_3}$ is a $2$-stratifier,
then $I=\{\bullet^{+_1},\bullet^{+_2},\bullet^{+_3}\}$ is a stratifier-set
and $\indices(I)=\{1,5,2\}$. For a non-example, if $\bullet^{*_1}$ and $\bullet^{*_2}$
are distinct $1$-stratifiers, then $\{\bullet^{*_1},\bullet^{*_2}\}$ is
not a stratifier-set, because it fails the distinctness condition.

\begin{definition}
\label{moduloidefn}
\begin{enumerate}
\item
Suppose $\mathscr N$ is an $\LPA(\indset)$-structure
and $I$ is a stratifier-set.
We define an $\LPA(\indset)$-structure $\mathscr N^I$
as follows.
The universe and interpretation of arithmetic of $\mathscr N^I$
agree with those of $\mathscr N$,
as do the interpretations of $\Pr i$ ($i\not\in\indices(I)$)
and $\Prr\alpha i$ (any $\alpha$, $i$).
For each $i\in\indices(I)$, let $\bullet^+\in I$ be the corresponding
$i$-stratifier, and let $\mathscr N^I$ interpret $\Pr i$ as follows.
For any $\LPA(\indset)$-formula $\phi$ and assignment $s$,
we consider two cases.
\begin{enumerate}
  \item If $\phi$ is an $\LPA(\omega)$-formula, then
  $\mathscr N^I \models \Pr i\phi[s]$ if and only if $\mathscr N\models (\Pr i\phi)^+[s]$.
  \item If $\phi$ is not an $\LPA(\omega)$-formula, then
  $\mathscr N^I \models \Pr i\phi[s]$ if and only if $\mathscr N\models \Pr i\phi[s]$.
\end{enumerate}
\item
For any $i\in\omega$, any $i$-stratifier $\bullet^+$, and any
$\LPA(\indset)$-structure $\mathscr N$,
let $\mathscr N^+=\mathscr N^I$ where $I=\{\bullet^+\}$
is the stratifier-set containing only $\bullet^+$.
\end{enumerate}
\end{definition}

Case 1b in Definition \ref{moduloidefn} is somewhat arbitrary.
We will only ever really care about whether
$\mathscr N^I \models \Pr i\phi[s]$ when $\Pr i\phi$ is $j$-stratified for some $j$.
If $\phi$ is not an $\LPA(\omega)$-formula then $\Pr i\phi$ is not $j$-stratified for any $j$.

\begin{lemma}
\label{structurecollapsingmagic}
(Compare Lemma \ref{structuregrowingmagic})
Suppose $\mathscr N$ is an $\LPA(\indset)$-structure, $i\in\omega$, and $\bullet^+$
is an $i$-stratifier.
For every $\LPA(\omega)$-formula $\phi$ and assignment $s$,
$\mathscr N^+\models\phi[s]$ if and only if $\mathscr N\models\phi^+[s]$.
\end{lemma}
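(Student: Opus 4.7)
The plan is to proceed by induction on the structure of the $\LPA(\omega)$-formula $\phi$, mirroring the recursive definition of the $i$-stratifier $\bullet^+$ in Definition \ref{stratifierdefn} and the parallel lemma \ref{structuregrowingmagic}. The four cases to handle are: $\phi$ atomic, $\phi \equiv \Pr j \phi_0$ with $j \neq i$, $\phi \equiv \Pr i \phi_0$, and $\phi$ a Boolean combination or universal quantification over already-handled subformulas.

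First I would dispatch the atomic case: here $\phi^+ \equiv \phi$ by Definition \ref{stratifierdefn}, and $\mathscr N^+$ agrees with $\mathscr N$ on arithmetic by construction (Definition \ref{moduloidefn}), so both sides are equivalent to $\mathscr N \models \phi[s]$. Next, for $\phi \equiv \Pr j \phi_0$ with $j \neq i$, again $\phi^+ \equiv \phi$, and since $j \notin \indices(\{\bullet^+\}) = \{i\}$, $\mathscr N^+$ interprets $\Pr j$ identically to $\mathscr N$, giving the equivalence directly. The Boolean connective and quantifier cases follow routinely from the inductive hypothesis, since $\mathscr N$ and $\mathscr N^+$ share a universe, and $\bullet^+$ commutes with $\neg$, $\rightarrow$, and $\forall$.

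The case that actually engages Definition \ref{moduloidefn} is $\phi \equiv \Pr i \phi_0$. Here $\phi^+ \equiv \Prr\alpha i \phi_0^+$, where $\alpha = \min\{x \in X : x > \onset(\phi_0^+)\}$. Because $\phi_0$ is itself an $\LPA(\omega)$-formula (as $\phi$ is), the $\Pr i\phi_0$ on the left-hand side falls under Case 1a of Definition \ref{moduloidefn}, which reads
\[
\mathscr N^+ \models \Pr i \phi_0[s] \quad \text{iff} \quad \mathscr N \models (\Pr i \phi_0)^+[s],
\]
and $(\Pr i \phi_0)^+ \equiv \phi^+$, so this case is immediate from the definition of $\mathscr N^+$ rather than from the inductive hypothesis.

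The only ``obstacle'' is really just bookkeeping: one has to observe that each clause of the inductive step is aligned with one clause of Definition \ref{stratifierdefn}, and that Case 1a of Definition \ref{moduloidefn} was engineered precisely to make the $\Pr i$ step trivial. No appeal to the stratification machinery, the Collapse Theorem, or Theorem \ref{blackbox} is needed here; in fact, Definition \ref{moduloidefn} was tailored so that this lemma holds by a direct induction, paralleling the proof of Lemma \ref{structuregrowingmagic}.
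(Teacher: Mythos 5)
Your proof is correct and matches the paper's approach, which is simply stated as ``By induction.'' Your case analysis correctly tracks Definition \ref{stratifierdefn} and identifies that the $\Pr i$ case is handled by Case~1a of Definition \ref{moduloidefn} rather than the inductive hypothesis, with the remaining cases routine.
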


\begin{proof}
By induction.
\end{proof}

\begin{lemma}
\label{stratifiersrespectvalidity}
For any $\LPA(\omega)$-formula $\phi$,
any $i\in\omega$, and any $i$-stratifier $\bullet^+$,
$\phi$ is valid if and only if $\phi^+$ is valid.
\end{lemma}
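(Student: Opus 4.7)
The plan is to split into two directions and reduce each to a result already established in this section. For the forward direction, I will fix an arbitrary $\LPA(\indset)$-structure $\mathscr N$ and assignment $s$, and form the structure $\mathscr N^+$ of Definition \ref{moduloidefn}. Since $\phi$ is an $\LPA(\omega)$-formula, whether $\mathscr N^+\models\phi[s]$ is determined solely by $\mathscr N^+$'s interpretation of arithmetic and of the operators $\Pr j$ ($j\in\omega$); these together form a genuine $\LPA(\omega)$-structure, so the validity of $\phi$ gives $\mathscr N^+\models\phi[s]$. Lemma \ref{structurecollapsingmagic} then translates this to $\mathscr N\models\phi^+[s]$, and since $\mathscr N$ and $s$ were arbitrary, $\phi^+$ is valid.

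For the reverse direction, I will first observe by a routine induction on $\phi$ that $(\phi^+)^-\equiv\phi$ for every $\LPA(\omega)$-formula $\phi$. The only case requiring thought is $\phi\equiv\Pr i\phi_0$, where by Definition \ref{stratifierdefn} we have $\phi^+\equiv\Prr{\alpha}{i}\phi_0^+$ for the appropriate $\alpha\in\epom$, and hence $(\phi^+)^-\equiv\Pr i(\phi_0^+)^-\equiv\Pr i\phi_0\equiv\phi$ by the inductive hypothesis. With this identity in hand, the reverse direction is immediate from Corollary \ref{minuspreservesvalidity}: if $\phi^+$ is a valid $\LPA(\indset)$-formula, then $(\phi^+)^-\equiv\phi$ is a valid $\LPA(\omega)$-formula.

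The main (minor) obstacle is ensuring that $\mathscr N^+$ really is an $\LPA(\indset)$-structure in the sense of Definition \ref{baselogicdefn}, so that its $\LPA(\omega)$-reduct is a bona fide $\LPA(\omega)$-structure to which the validity of $\phi$ applies. This reduces to checking that the stratifier-modified interpretation of $\Pr i$ prescribed by Definition \ref{moduloidefn} inherits conditions (1)--(3) of Definition \ref{baselogicdefn} from $\mathscr N$; each condition follows because the stratifier $\bullet^+$ preserves free variables, respects alphabetic variants, and commutes with substitution into numerals on $\LPA(\omega)$-formulas.
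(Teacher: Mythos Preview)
Your proof is correct and follows essentially the same route as the paper's: the forward direction via $\mathscr N^+$ and Lemma \ref{structurecollapsingmagic}, and the reverse direction via Corollary \ref{minuspreservesvalidity}. You have simply made explicit two points the paper leaves implicit---that $(\phi^+)^-\equiv\phi$ and that $\mathscr N^+$ is a well-defined structure---both of which are routine.
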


\begin{proof}
\item
($\Rightarrow$)
Assume $\phi$ is valid.
For any $\LPA(\indset)$-structure $\mathscr N$ and assignment $s$,
$\mathscr N^+\models\phi[s]$ by validity, so $\mathscr N\models\phi^+[s]$ by
Lemma \ref{structurecollapsingmagic}.

\item
($\Leftarrow$)
By Corollary \ref{minuspreservesvalidity}.
\end{proof}

\begin{lemma}
\label{decomposingMtotheI}
Suppose $\M$ is an $\LPA(\indset)$-structure,
$I_0$ is a stratifier-set, $i\in\omega$, $i\not\in\indices(I_0)$,
and $\bullet^+$ is an $i$-stratifier.
Let $I=I_0\cup\{\bullet^+\}$.
Then $\M^I=(\M^{I_0})^+$.
Furthermore, $\M^+$ and $\M^I$ agree on the interpretation of $\Pr i$.
\end{lemma}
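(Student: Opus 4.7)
The proof plan is a routine but careful case analysis on the operators of $\LPA(\indset)$. To show $\M^I = (\M^{I_0})^+$, I would verify agreement component by component. The universe and the interpretation of $\LPA(\omega)$-arithmetic are trivially inherited from $\M$ in all three structures. The operators $\Prr\alpha k$ (every $\alpha\in\epom$, every $k\in\omega$) are, by Definition \ref{moduloidefn}, interpreted in each of $\M^I$, $\M^{I_0}$, and $(\M^{I_0})^+$ exactly as in $\M$, so these match automatically. Likewise, for $\Pr k$ with $k\notin\indices(I)=\indices(I_0)\cup\{i\}$, none of the three structures modifies $\M$'s interpretation.

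Next, I would handle $\Pr k$ for $k\in\indices(I_0)$ (so $k\neq i$). Here $\M^{I_0}$ and $\M^I$ both reinterpret $\Pr k$ via the same $k$-stratifier $\bullet^{+_k}\in I_0\subseteq I$ according to the clauses of Definition \ref{moduloidefn}, so they agree. Since $k\neq i$, the outer application $(\M^{I_0})^+$ leaves $\Pr k$ alone and just copies the interpretation from $\M^{I_0}$, matching $\M^I$.

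The main case is $\Pr i$. For an $\LPA(\omega)$-formula $\phi$, Definition \ref{moduloidefn} gives
\[
\M^I\models\Pr i\phi[s]\iff\M\models(\Pr i\phi)^+[s]\iff\M\models\Prr\alpha i\phi^+[s],
\]
where $\alpha$ and $\phi^+$ come from the $i$-stratifier $\bullet^+$. On the other side,
\[
(\M^{I_0})^+\models\Pr i\phi[s]\iff\M^{I_0}\models(\Pr i\phi)^+[s]\iff\M^{I_0}\models\Prr\alpha i\phi^+[s],
\]
with the same $\alpha$ and $\phi^+$. The key observation, which I would emphasize, is that in the base logic the semantics of the top-level operator $\Prr\alpha i$ is given by a single function that does not descend recursively into $\phi^+$; and this function is the same in $\M^{I_0}$ as in $\M$ by Definition \ref{moduloidefn}. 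So the two right-hand sides coincide. For $\phi$ not in $\LPA(\omega)$, both $\M^I\models\Pr i\phi[s]$ and $(\M^{I_0})^+\models\Pr i\phi[s]$ reduce to $\M\models\Pr i\phi[s]$: the first directly by Definition \ref{moduloidefn}(1b), the second by noting that $(\M^{I_0})^+$ falls into case (1b) and $\M^{I_0}$ interprets $\Pr i$ as $\M$ does since $i\notin\indices(I_0)$.

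The ``furthermore'' clause is immediate: by the same computation as the $\Pr i$ case above with $I_0=\emptyset$, both $\M^+$ and $\M^I$ interpret $\Pr i$ via the single stratifier $\bullet^+$ acting on $\M$. There is no real obstacle here beyond notational bookkeeping; the one place one must be careful is not to conflate the operator-level semantics of Definition \ref{baselogicdefn} (which is a direct function on $(\Pr i,\phi,s)$) with a recursive clause, since this is exactly what makes the outer $\bullet^+$ commute harmlessly through the inner $I_0$-modification.
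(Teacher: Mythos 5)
Your case analysis is correct and fills in exactly what the paper dismisses as ``Straightforward'': component-by-component agreement, with the crux being that the operator semantics in the base logic (Definition \ref{baselogicdefn}) is a direct function on the triple $(\Prr\alpha i, \psi, s)$ rather than a recursive clause, so $\M^{I_0}$ and $\M$ automatically agree on $\Prr\alpha i\phi^+[s]$ regardless of what $\phi^+$ contains. The identification of that one non-obvious point as the place where the outer stratifier commutes harmlessly past the inner $I_0$-modification is the right thing to emphasize, and the ``furthermore'' clause does indeed fall out immediately once one notes that both $\M^+$ and $\M^I$ route $\Pr i$ through $\bullet^+$ evaluated against the same underlying $\M$.
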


\begin{proof}
Straightforward.
\end{proof}

\begin{lemma}
\label{technicalLemmaToShortenMainProof}
    Suppose $i\in\omega$ and suppose $\M$ is an $\LPA(\indset)$-structure
    with the property that for every very $i$-stratified $\LPA(\indset)$-formula
    $\phi$ and assignment $s$, $\M\models\phi[s]$ if and only if $\M\models\phi^-[s]$.
    Suppose $I$ is a stratifier-set such that $i\not\in\indices(I)$.
    Then for every very $i$-stratified $\LPA(\indset)$-formula $\phi$ and assignment
    $s$, $\M^I\models\phi[s]$ if and only if $\M^I\models\phi^-[s]$.
\end{lemma}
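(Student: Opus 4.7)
The plan is to prove the statement by straightforward induction on the complexity of the very $i$-stratified formula $\phi$. The key observation, which drives the entire argument, is that by Definition \ref{moduloidefn}, since $i\not\in\indices(I)$ the structure $\M^I$ agrees with $\M$ on the interpretation of $\Pr i$ and also agrees with $\M$ on the interpretation of $\Prr\alpha i$ for every $\alpha$. So the passage from $\M$ to $\M^I$ only disturbs interpretations of $\Pr j$ for $j\in\indices(I)$, and since $i\notin\indices(I)$, all such $j$ are different from $i$.

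Now I classify the cases arising in the induction, using the definition of $i$-stratification to restrict what subformulas can appear. Atomic formulas are trivial since $\phi^-\equiv\phi$. Boolean and quantifier cases follow immediately from the inductive hypothesis. The cases $\Pr i\psi$ and $\Prr\alpha j\psi$ (for $j\neq i$) cannot occur because such formulas are not $i$-stratified. If $\phi\equiv \Pr j\psi$ with $j\neq i$, then the $i$-stratification of $\phi$ forces $\psi$ to be an $\LPA(\omega)$-formula, so $\phi^-\equiv\phi$ and the biconditional is immediate.

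The only case requiring real work is $\phi\equiv\Prr\alpha i\psi$ where $\psi$ is very $i$-stratified. Here I would chain the equivalences
\begin{align*}
\M^I\models\Prr\alpha i\psi[s]
 &\Longleftrightarrow \M\models\Prr\alpha i\psi[s] &\text{(agreement of $\M^I$ and $\M$ on $\Prr\alpha i$)}\\
 &\Longleftrightarrow \M\models (\Prr\alpha i\psi)^-[s] &\text{(hypothesis on $\M$, since $\Prr\alpha i\psi$ is very $i$-stratified)}\\
 &\Longleftrightarrow \M\models \Pr i\psi^-[s]\\
 &\Longleftrightarrow \M^I\models \Pr i\psi^-[s] &\text{(agreement on $\Pr i$ since $i\notin\indices(I)$)}\\
 &\Longleftrightarrow \M^I\models (\Prr\alpha i\psi)^-[s].
\end{align*}

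There is no real obstacle beyond carefully bookkeeping which operators $\M^I$ and $\M$ agree on. The hypothesis on $\M$ supplies the equivalence across the $^-$ operation, and the condition $i\notin\indices(I)$ guarantees that the modification $\M\mapsto \M^I$ leaves untouched every operator that a very $i$-stratified formula can actually involve at its outermost position.
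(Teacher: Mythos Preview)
Your proof is correct and takes essentially the same approach as the paper's: both proceed by induction on $\phi$, dismiss the impossible cases via the definition of $i$-stratification, and handle the key case $\phi\equiv\Prr\alpha i\psi$ by the same chain of equivalences exploiting that $\M^I$ and $\M$ agree on $\Prr\alpha i$ and on $\Pr i$ (since $i\notin\indices(I)$). The paper's write-up is slightly terser, omitting the atomic, Boolean, and quantifier cases as routine.
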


\begin{proof}
    By induction on $\phi$. Let $s$ be an assignment.
    The only interesting cases are the following.

    \item
    \case1
    $\phi$ is $\Pr j\psi$ for some $j$. Then $\phi^-\equiv\phi$ and the claim
    is trivial.

    \item
    \case2
    $\phi$ has the form $\Prr \alpha j\psi$ for some $j\not=i$. Impossible, this is
    not $i$-stratified.

    \item
    \case3
    $\phi$ has the form $\Prr \alpha i \psi$. The following are equivalent:

    \begin{align*}
        \M^I &\models \Prr \alpha i \psi[s]\\
        \M &\models \Prr \alpha i \psi[s]
            &\mbox{($\M$ and $\M^I$ agree on $\Prr \alpha i$)}\\
        \M &\models (\Prr \alpha i \psi)^-[s]
            &\mbox{(By hypothesis)}\\
        \M^I &\models (\Prr \alpha i\psi)^-[s].
            &\mbox{(Since $i\not\in\indices(I)$, $\M$ and $\M^I$ agree on $\Pr i$)}
    \end{align*}
\end{proof}

\begin{lemma}
\label{raisingMtotheIpreservesintent}
Suppose $\LPA(\indset)$-structure $\M$ is an instance of
Definition \ref{intendedstructdefn}, and suppose
$I$ is a stratifier-set.
Then $\M^I$ interprets formulas by substitution.
\end{lemma}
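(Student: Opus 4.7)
My plan is to prove the lemma by induction on the structure of the $\LPA(\indset)$-formula $\phi$, using as the base fact that $\M$ itself interprets formulas by substitution (which holds by Lemma \ref{howintendedworks}, since $\M=\M_{\T}$ for some family $\T$ of $\LPA(\indset)$-theories). Throughout, let $s$ be an arbitrary assignment. Since $\M^I$ inherits its universe and its interpretation of the arithmetic symbols from $\M$, it has standard first-order part, so the atomic case and the propositional/quantifier cases go through routinely; the action is all in the operator cases.

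For operators where Definition \ref{moduloidefn} says $\M^I$ agrees with $\M$ --- namely, every $\Prr\alpha j$, and every $\Pr j$ with $j\not\in\indices(I)$ --- the claim is immediate: $\M^I\models\phi[s]$ iff $\M\models\phi[s]$ iff $\M\models\phi^s$ iff $\M^I\models\phi^s$, where the first and last equivalences use agreement on the operator in question together with agreement on arithmetic (applied to the free-variable-free sentence $\phi^s$), and the middle equivalence is Lemma \ref{howintendedworks}.

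The genuine case is $\phi\equiv\Pr i\psi$ for $i\in\indices(I)$; let $\bullet^{+}\in I$ be the corresponding $i$-stratifier. Note that substituting numerals for free variables never adds or removes occurrences of operators, so $\psi$ is an $\LPA(\omega)$-formula if and only if $\psi^s$ is. If $\psi$ is not an $\LPA(\omega)$-formula, then both $\M^I\models\Pr i\psi[s]$ and $\M^I\models\Pr i\psi^s$ reduce, via clause (1b) of Definition \ref{moduloidefn}, to the corresponding $\M$-satisfactions, which agree by Lemma \ref{howintendedworks} applied to $\M$; we also use that $(\Pr i\psi)^s\equiv\Pr i\psi^s$. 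If $\psi$ is an $\LPA(\omega)$-formula, clause (1a) gives $\M^I\models\Pr i\psi[s]$ iff $\M\models(\Pr i\psi)^{+}[s]$, which by Lemma \ref{howintendedworks} applied to $\M$ is equivalent to $\M\models((\Pr i\psi)^{+})^s$; analogously, $\M^I\models\Pr i\psi^s$ iff $\M\models(\Pr i\psi^s)^{+}$. So it all comes down to the syntactic identity $((\Pr i\psi)^{+})^s\equiv(\Pr i\psi^s)^{+}$.

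The main (and only) technical point is therefore the subsidiary claim: for every $\LPA(\omega)$-formula $\theta$ and assignment $s$, $(\theta^{+})^s\equiv(\theta^s)^{+}$. I would prove this by a straightforward induction on $\theta$; the only case requiring thought is $\theta\equiv\Pr i\theta_0$, where $\theta^{+}\equiv\Prr{\alpha}{i}\theta_0^{+}$ with $\alpha=\min\{x\in X:x>\onset(\theta_0^{+})\}$, and $(\theta^s)^{+}\equiv\Prr{\beta}{i}(\theta_0^s)^{+}$ with $\beta=\min\{x\in X:x>\onset((\theta_0^s)^{+})\}$. By induction $(\theta_0^{+})^s\equiv(\theta_0^s)^{+}$, and since substituting numerals never alters the set of superscripts occurring in a formula, $\onset((\theta_0^{+})^s)=\onset(\theta_0^{+})$; putting these together yields $\alpha=\beta$ and completes the identity. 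No obstacle beyond this bookkeeping is expected, since all the nontrivial content of the lemma is the observation that the $i$-stratifier's choice of superscript depends only on the operators present in the formula, and substitution of numerals for free variables leaves those untouched.
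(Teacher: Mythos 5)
Your proof is correct, and it takes a different organizational route than the paper's. The paper first inducts on $|I|$, peeling off one stratifier at a time via Lemma \ref{decomposingMtotheI} ($\M^I=(\M^{I_0})^+$), and then in the inductive step inducts on $\phi$ and invokes Lemma \ref{structurecollapsingmagic} to handle the top-level $\Pr i$; it asserts the syntactic commutation $(\phi^+)^s\equiv(\phi^s)^+$ without proof. You collapse the two-level induction into a single induction on $\phi$, handle all $i\in\indices(I)$ uniformly by unfolding Definition \ref{moduloidefn} directly (so Lemmas \ref{decomposingMtotheI} and \ref{structurecollapsingmagic} are never needed), and, crucially, supply the induction that the paper relegates to ``Clearly'': your observation that an $i$-stratifier's choice of superscript depends only on $\onset(\theta_0^+)$, which numeral substitution leaves fixed, is exactly the content of $(\theta^+)^s\equiv(\theta^s)^+$ and is the heart of the lemma. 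The tradeoff is that the paper's version reuses machinery it has already built, whereas yours is more self-contained and makes the key syntactic fact explicit rather than leaving it implicit.
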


\begin{proof}
By induction on $|I|$.
If $|I|=0$, we are done by Lemma \ref{howintendedworks}.
Otherwise, we may decompose $I$ as $I=I_0\cup\{\bullet^+\}$
where $\bullet^+$ is an $i$-stratifier.
By induction, $\M^{I_0}$ interprets formulas by substitution ($*$).
By Lemma \ref{decomposingMtotheI}, $\M^I=(\M^{I_0})^+$.

By definition of interpreting formulas by substitution,
for every $\LPA(\indset)$-formula $\phi$ and assignment $s$,
$\M^{I_0}\models\phi[s]$ if and only if $\M^{I_0}\models\phi^s$.
We must show that for every such $\phi$ and $s$, $(\M^{I_0})^+\models\phi[s]$ if and only
if $(\M^{I_0})^+\models\phi^s$.

We induct on $\phi$.
By Definition \ref{moduloidefn}, $(\M^{I_0})$ and $(\M^{I_0})^+$ agree on all symbols
except $\Pr i$, and they agree on $\Pr i \phi_0$ if $\phi_0$ is not an
$\LPA(\omega)$-formula.
Thus the only nontrivial case is when $\phi$
is of the form $\Pr i\phi_0$ for some $\LPA(\omega)$-formula $\phi_0$.
Any such $\phi$ is itself an $\LPA(\omega)$-formula and thus susceptible to
Lemma \ref{structurecollapsingmagic}.  The following are equivalent.
\begin{align*}
(\M^{I_0})^+ &\models \phi[s]\\
\M^{I_0} &\models \phi^+[s]
  &\mbox{(Lemma \ref{structurecollapsingmagic})}\\
\M^{I_0} &\models (\phi^+)^s
  &\mbox{(By ($*$))}\\
\M^{I_0} &\models (\phi^s)^+
  &\mbox{(Clearly $(\phi^+)^s\equiv(\phi^s)^+$)}\\
(\M^{I_0})^+ &\models \phi^s.
  &\mbox{(Lemma \ref{structurecollapsingmagic})}
\end{align*}
\end{proof}

\section{Generic Stratified Axioms}

We now have enough technical machinery to fulfill the second promise from the
Introduction.
We will fulfill it in a general
way, essentially saying: ``The theories in question, whose
truth were in doubt, are true together with any background theory of
provability such that...'' Just like in Section \ref{genericaxiomssection},
we do this by introducing a notion of genericness.
Throughout this section, $\prec$ is an r.e.\ well-founded partial-order
of $\omega$.


\begin{definition}
If $i\in\omega$, we say that a stratifier-set $I$ is \emph{above $i$}
if $\forall j\in\indices(I)$, $i\prec j$. We adopt the following convention:
if $I$ is above $i$ then we will write $I$ as $I(i)$ in order to remind
ourselves that $I$ is above $i$.
\end{definition}

\begin{definition}
\label{bootstrapclosedrestrat}
(Compare Definition \ref{closedregenericdefn})
Suppose $\T=(T_i)_{i\in\omega}$
is an r.e.~family of $\LPA(\indset)$-theories
and each $T_i$ is $i$-unistratified.
We say $\T$ is \emph{$\prec$-straticlosed-r.e.-generic}
(or \emph{straticlosed-r.e.-generic}, if $\prec$ is clear from context) if
for every straticlosed r.e.~family $\U\supseteq\T$, every $i\in\omega$,
and every computable stratifier-set $I(i)$ above $i$,
$\M^{I(i)}_{\str\U}\models T_i$.
\end{definition}

\begin{lemma}
\label{straticlosedstitching}
If the family $\T=(T_i)_{i\in\omega}$ of $\LPA(\indset)$-sets is r.e.~and is a union
of straticlosed-r.e.-generic families, then $\T$ is straticlosed-r.e.-generic.
\end{lemma}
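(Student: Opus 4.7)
The plan is to mimic the proof structure of Lemma \ref{genericclosedreunion}, verifying in turn the three requirements packaged into Definition \ref{bootstrapclosedrestrat}: recursive enumerability, $i$-unistratifiedness of each component, and the model-theoretic condition on arbitrary straticlosed r.e.~extensions.

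First I would write $\T = \bigcup_{n \in X} \T^n$ with each $\T^n = (T^n_i)_{i\in\omega}$ straticlosed-r.e.-generic. Recursive enumerability of $\T$ is given by hypothesis, so there is nothing to do there. Next I would check that each $T_i = \bigcup_n T^n_i$ is $i$-unistratified. Being $i$-stratified is a per-formula property, so a union of $i$-stratified theories is $i$-stratified. For uniformity, if $\phi \in T_i$ then $\phi \in T^n_i$ for some $n$; given any $X \subseteq \epom$ with $\onset(\phi) \subseteq X$ and any covering $h:X\to\epom$, the uniformity clause for the $i$-unistratified $T^n_i$ yields $h(\phi) \in T^n_i \subseteq T_i$.

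The main substantive step is the model-theoretic one. Let $\U\supseteq\T$ be any straticlosed r.e.~family, fix $i\in\omega$, and fix a computable stratifier-set $I(i)$ above $i$. Because $\T^n \subseteq \T \subseteq \U$ for every $n$, and $\T^n$ is straticlosed-r.e.-generic, applying Definition \ref{bootstrapclosedrestrat} to $\T^n$ inside $\U$ gives $\M^{I(i)}_{\str{\U}} \models T^n_i$. Taking the union over $n \in X$ yields $\M^{I(i)}_{\str{\U}} \models \bigcup_n T^n_i = T_i$, as required.

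There is no real obstacle here; the lemma is essentially the observation that all three clauses of the definition pass through arbitrary unions, with the r.e.~hypothesis on $\T$ included precisely because unions of r.e.~sets need not be r.e.~in general. The only place where one has to pause is to confirm that straticlosed-r.e.-genericity is witnessed against the \emph{same} $\U$ for every summand $\T^n$ (so that we can take the union of the satisfaction statements inside a single intended structure), which is automatic since $\U$ was fixed at the outset.
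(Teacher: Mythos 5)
Your proof is correct and, since the paper dismisses this lemma with "Straightforward," it is presumably exactly the argument the author had in mind: recursive enumerability is assumed, $i$-unistratifiedness passes through unions because both the $i$-stratified property and the uniformity clause are per-formula, and the model-theoretic condition follows by applying Definition \ref{bootstrapclosedrestrat} to each summand $\T^n$ against the same fixed $\U\supseteq\T\supseteq\T^n$. Your closing remark about witnessing against a single $\U$ is the right thing to notice, and everything checks out.
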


\begin{proof}
Straightforward.
\end{proof}

\subsection{Straticlosed-r.e.-generic Building Blocks}

As in Section \ref{closedregenericblocks}, we exhibit some examples of
straticlosed-r.e.-generic families, which can be combined (via
Lemma \ref{straticlosedstitching}) to form background theories of
provability. This will allow us to state Theorem \ref{generalizedtwoonethree}
below in a generalized way, essentially saying that certain doubted
theories are consistent with \emph{any} background theory of provability built up
from such blocks. This saves us from having to arbitrarily impose any
particular background theory of provability.

In the following lemma, for part 3, the intuition is that for the purpose
of straticlosed-r.e.-genericness, what things
$T_i$ says about $T_j$ need not merely be true,
but must even remain true when a $j$-stratifier is applied to them.
$\Pr j(\phi\rightarrow\psi)
\rightarrow\Pr j\phi \rightarrow\Pr j\psi$ lacks this property, because
it could be that $(\Pr j\phi)^+\equiv \Prr\alpha j\phi^+$,
$(\Pr j\psi)^+\equiv \Prr\beta j\psi^+$, where $\beta<\alpha$.
For parts 1--2, the reason we cannot merge these parts
into $[\mbox{$j$-Deduction}]_i$ ($j\preceq i$) is because
$[\mbox{$i$-Deduction}]_i$ is not $i$-stratified.

\begin{lemma}
\label{secondutilbagdeduction}
(Compare Lemma \ref{firstutilbagdeduction})
For any $i,j\in\omega$, each of the following families is straticlosed-r.e.-generic.
\begin{enumerate}
\item $[\mbox{$i$-Stratideduction}]_i$.
\item $[\mbox{$j$-Deduction}]_i$ (if $j\prec i$).
\item $[S]_i$ (if $i\prec j$) where $S$ is the following schema ($\phi,\psi$
    range over $\LPA(\omega)$-formulas):
\[
  \mbox{
    (Modified $j$-Deduction)
    $\ucl{\Pr j(\phi\rightarrow\psi)\rightarrow\Pr j\phi\rightarrow
    \Pr j(\psi\wedge\phi)}$.
  }
\]
\end{enumerate}
\end{lemma}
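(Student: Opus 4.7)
The plan is, for each of the three parts, first to verify the preconditions of $\prec$-straticlosed-r.e.-genericness (that $\T$ is r.e.\ and each component $T_k$ is $k$-unistratified, both routine), and then to check the generic property by unwinding $\M^{I(i)}_{\str\U}$ via Definition \ref{moduloidefn} and $\M_{\str\U}$ via Definitions \ref{stratdefn} and \ref{intendedstructdefn}. I will fix a straticlosed r.e.~$\U\supseteq\T$, an $i\in\omega$, and a computable stratifier-set $I(i)$ above $i$, and verify $\M^{I(i)}_{\str\U}\models T_i$ axiom-by-axiom. Uniformity of each schema under coverings $h$ is immediate: in part 1 applying $h$ only relabels the common superscript of a Stratideduction instance, yielding another Stratideduction instance; in parts 2 and 3 the axioms contain no superscripts at all, so $h$ acts trivially on them.

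Parts 1 and 2 reduce to essentially trivial modus-ponens calculations once one observes that the relevant operator is not altered by passage to $\M^{I(i)}_{\str\U}$. In part 1 the axioms mention only $\Prr\alpha i$; since $I(i)$ is above $i$, irreflexivity of $\prec$ gives $i\not\in\indices(I(i))$, so $\Prr\alpha i$ is unchanged, and evaluation of an $i$-Stratideduction instance unpacks to statements of the form $U_i\cap\alpha\models(\phi\rightarrow\psi)^s$ and $U_i\cap\alpha\models\phi^s$, from which $U_i\cap\alpha\models\psi^s$ by ordinary modus ponens inside $U_i\cap\alpha$. In part 2 the only operator is $\Pr j$ with $j\prec i$; transitivity with any $k\succ i$ forces $k\neq j$, so $j\not\in\indices(I(i))$, and $\Pr j$ in $\M^{I(i)}_{\str\U}$ agrees with $\M_{\str\U}$, interpreting $U_j^-$-entailment, to which plain modus ponens applies.

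Part 3 is where the new content lives. Here $i\prec j$, so $j$ may belong to $\indices(I(i))$; when it does, Definition \ref{moduloidefn} tells us that for any $\LPA(\omega)$-formula $\chi$, $\M^{I(i)}_{\str\U}\models\Pr j\chi[s]$ iff $U_j\cap\alpha\models(\chi^+)^s$, where $\bullet^+$ is the $j$-stratifier in $I(i)$ given by some $X\subseteq\epom$ and $\alpha=\min\{x\in X:x>\onset(\chi^+)\}$. Computing for the three subformulas of a Modified $j$-Deduction instance yields the common superscript $\alpha_1=\min\{x\in X:x>\onset(\phi^+)\cup\onset(\psi^+)\}$ for both $(\Pr j(\phi\rightarrow\psi))^+$ and $(\Pr j(\psi\wedge\phi))^+$, but a possibly smaller $\alpha_2=\min\{x\in X:x>\onset(\phi^+)\}\leq\alpha_1$ for $(\Pr j\phi)^+$. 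Using $U_j\cap\alpha_2\subseteq U_j\cap\alpha_1$ to lift the second hypothesis, modus ponens inside $U_j\cap\alpha_1$ delivers $U_j\cap\alpha_1\models(\psi^+\wedge\phi^+)^s$, which re-folds to $\M^{I(i)}_{\str\U}\models\Pr j(\psi\wedge\phi)[s]$. When $j\not\in\indices(I(i))$ we instead fall back to the part-2-style argument.

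The central obstacle, and the reason for the ``Modified'' in Modified $j$-Deduction, is exactly this matching of superscripts. If we had ordinary $j$-Deduction in place, the conclusion $\Pr j\psi$ would pick up superscript $\min\{x\in X:x>\onset(\psi^+)\}$, which can be strictly below $\alpha_1$; the witness $U_j\cap\alpha_1\models(\psi^+)^s$ cannot in general be transported down to that smaller level of $U_j$, because $U_j\cap\alpha_1$ may contain axioms with superscripts exceeding the target level. Placing $\psi\wedge\phi$ on the right restores the balance of $\onset$, forcing the target superscript to coincide with $\alpha_1$. Once this observation is in hand, the remaining verifications are bookkeeping.
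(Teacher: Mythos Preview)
Your proposal is correct and follows essentially the same route as the paper's proof: the same case split in part~3 on whether $j\in\indices(I(i))$, the same key observation that $(\Pr j(\phi\rightarrow\psi))^+$ and $(\Pr j(\psi\wedge\phi))^+$ receive the same superscript while $(\Pr j\phi)^+$ receives a possibly smaller one, and the same modus-ponens calculation inside $U_j\cap\alpha_1$. One tiny remark: in part~1 you need not invoke $i\not\in\indices(I(i))$ to conclude $\Prr\alpha i$ is unchanged, since by Definition~\ref{moduloidefn} the structure $\M^{I}$ agrees with $\M$ on every $\Prr\alpha k$ regardless of $\indices(I)$; your argument is sufficient but the reason is simpler.
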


\begin{proof}
Clearly these families are unistratified.
Recursive enumerability follows from the fact that $\prec$ is r.e.
In each case below, let $\U=(U_k)_{k\in\omega}$ be a straticlosed r.e.~family
extending the family in question.
For brevity, let $\M=\M_{\str\U}$.

\item
(1) Let $I(i)$ be any computable stratifier-set above $i$, we must show
$\M^{I(i)}\models \ucl{\Prr\alpha i(\phi\rightarrow\psi)
\rightarrow\Prr\alpha i\phi\rightarrow\Prr\alpha i\psi}$ assuming this
formula is $i$-stratified.
Let $s$ be an assignment and assume
$\M^{I(i)}\models \Prr\alpha i(\phi\rightarrow\psi)[s]$
and $\M^{I(i)}\models\Prr\alpha i\phi[s]$.
By Definition \ref{moduloidefn}, $\M^{I(i)}$ and $\M$ agree on $\Prr\alpha i$,
so $\M\models\Prr\alpha i(\phi\rightarrow\psi)[s]$.
By definition of $\M=\M_{\str\U}$, this means $U_i\cap\alpha\models (\phi\rightarrow\psi)^s$.
Clearly $(\phi\rightarrow\psi)^s\equiv \phi^s\rightarrow\psi^s$,
so $U_i\cap\alpha\models \phi^s\rightarrow\psi^s$.
By similar reasoning, $U_i\cap\alpha\models\phi^s$.
By modus ponens, $U_i\cap\alpha\models\psi^s$, which means
$\M\models \Prr \alpha i \psi[s]$.
Since $\M$ and $\M^{I(i)}$ agree on $\Prr\alpha i$,
$\M^{I(i)}\models \Prr\alpha i\psi[s]$, as desired.

\item
(2) Let $I(i)$ be any computable stratifier-set above $i$, we must show
$\M^{I(i)}\models \ucl{\Pr j(\phi\rightarrow\psi)
\rightarrow\Pr j\phi\rightarrow\Pr j\psi}$.

Let $s$ be an assignment and assume
$\M^{I(i)}\models \Pr j(\phi\rightarrow\psi)[s]$
and $\M^{I(i)}\models\Pr j\phi[s]$.
Since $I(i)$ is above $i$ and $j\prec i$, $\M^{I(i)}$ and $\M$
agree on $\Pr j$, so
$\M\models \Pr j(\phi\rightarrow\psi)[s]$
and $\M\models\Pr j\phi[s]$.
By definition of $\M$, $U^-_j\models\phi^s\rightarrow\psi^s$
and $U^-_j\models\phi^s$, thus $U^-_j\models\psi^s$, so
$\M\models \Pr j\psi[s]$ and thus so does $\M^{I(i)}$.

\item
(3) Let $I(i)$ be any computable stratifier-set above $i$,
we must show
$\M^{I(i)}\models \ucl{\Pr j(\phi\rightarrow\psi)\rightarrow
    \Pr j\phi\rightarrow \Pr j(\psi\wedge\phi)}$.
Let $s$ be an assignment and assume $\M^{I(i)}\models \Pr j(\phi\rightarrow\psi)[s]$
and $\M^{I(i)}\models\Pr j\phi[s]$.
If $j\not\in\indices(I(i))$, then $\M^{I(i)}$ and $\M$
agree on $\Pr j$, so reason as in (2) above.
If not, we can write $I(i)=I_0\cup\{\bullet^+\}$ where $\bullet^+$ is a
computable $j$-stratifier, and Lemma \ref{decomposingMtotheI} ensures that
$\M^{I(i)}$ and $\M^+$ agree on $\Pr j$.
By definition of $\M^+$, $\M\models(\Pr j(\phi\rightarrow\psi))^+[s]$
and $\M\models(\Pr j\phi)^+[s]$.
Let $\alpha,\beta\in\epom$ be such that
$(\Pr j(\phi\rightarrow\psi))^+\equiv\Prr\alpha j(\phi^+\rightarrow\psi^+)$
and $(\Pr j\phi)^+\equiv\Prr\beta j\phi^+$.
Then
$\M\models\Prr\alpha j (\phi^+\rightarrow\psi^+)[s]$ and $\M\models\Prr\beta j\phi^+[s]$.
This means
$U_j\cap\alpha\models(\phi^+\rightarrow\psi^+)^s$ and $U_j\cap\beta\models(\phi^+)^s$.
Since $\phi$ is a subformula of $\phi\rightarrow\psi$, it follows $\beta\leq\alpha$, thus
$U_j\cap\alpha\models (\psi^+ \wedge \phi^+)^s$.
So $\M\models\Prr\alpha j(\psi^+\wedge\phi^+)[s]$.
By Definition \ref{stratifierdefn},
\[
\Prr\alpha j(\psi^+\wedge\phi^+) \,\equiv\, (\Pr j(\psi\wedge\phi))^+
\]
(this is the reason for the $\psi\wedge\phi$ clause)
and finally $\M^+\models \Pr j(\psi\wedge\phi)[s]$.
\end{proof}

In Lemma \ref{firstutilbagvalidity}, we introduced Assigned Validity as a single schema
for inclusion in $T_i$ for any $i$. In the following lemma, we need to break the
stratified version of Assigned Validity into different $\omega$-indexed families
because the stratified version of Assigned Validity intended for inclusion in $T_i$
(for any particular $i$) needs to be $i$-stratified.

\begin{lemma}
\label{secondutilbagvalidity}
(Compare Lemma \ref{firstutilbagvalidity})
For any $i,j\in\omega$, each of the following families is straticlosed-r.e.-generic.
\begin{enumerate}
\item $[S]_i$ where $S$ is: ($i$-Assigned Strativalidity)
the schema $\phi^s$ ($\phi$ valid and $i$-stratified, $s$ an assignment).
\item $[\mbox{$i$-Assigned Strativalidity}]_i \cup [\mbox{$i$-Strativalidity}]_i$.
\item $[\mbox{$i$-Assigned Strativalidity}]_i \cup
[\mbox{$i$-Validity}]_j$ (if $j\not=i$).
\end{enumerate}
\end{lemma}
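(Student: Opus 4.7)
The plan is to verify the three defining conditions of straticlosed-r.e.-genericness for each of the three families: recursive enumerability, $i$-unistratification, and the semantic condition that $\M^{I(k)}_{\str{\U}}\models T_k$ for every straticlosed r.e.~$\U\supseteq\T$ and every computable stratifier-set $I(k)$ above $k$. Recursive enumerability is immediate from Theorem \ref{completecompact}. For unistratification, coverings preserve validity by Corollary \ref{hpreserversvalidity} and they preserve $i$-stratification by a quick inspection of Definition \ref{IStratifiedDefinition} (the only nontrivial clause is the superscript inequality $\alpha>\onset(\phi_0)$, preserved by any order-preserving $h$); combined with $h(\phi^s)\equiv h(\phi)^s$, this handles (1) and (2). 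For $i$-Validity in (3), uniformity is trivial since its instances are $\LPA(\omega)$-sentences with no superscripts.

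For the semantic condition in (1), fix $\U\supseteq [\mbox{$i$-Assigned Strativalidity}]_i$ and $I(i)$ above $i$; for $\phi$ valid and $i$-stratified and assignment $s$, validity gives $\M^{I(i)}_{\str{\U}}\models\phi[s]$, and Lemma \ref{raisingMtotheIpreservesintent} promotes this to $\M^{I(i)}_{\str{\U}}\models\phi^s$. In (2), the $i$-Assigned Strativalidity component reduces to (1); for $i$-Strativalidity, since $I(i)$ is above $i$ we have $i\not\in\indices(I(i))$, so $\M^{I(i)}_{\str{\U}}$ and $\M_{\str{\U}}$ agree on $\Prr\alpha i$, reducing $\M^{I(i)}_{\str{\U}}\models\Prr\alpha i\phi[s]$ to $U_i\cap\alpha\models\phi^s$, which is trivial since $\phi^s$ is valid.

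For (3), the $i$-Assigned Strativalidity component in $T_i$ again reduces to (1). For $i$-Validity in $T_j$, fix $\U$ and $I(j)$ above $j$, and let $\phi$ be a valid $\LPA(\omega)$-formula (clause 1 of Definition \ref{IStratifiedDefinition} forces this, since $j\neq i$ and $\Pr i\phi$ must be $j$-stratified). We need $\M^{I(j)}_{\str{\U}}\models\Pr i\phi[s]$. If $i\not\in\indices(I(j))$, then $\M^{I(j)}_{\str{\U}}$ and $\M_{\str{\U}}$ agree on $\Pr i$, reducing to $U_i^-\models\phi^s$, which holds by validity. Otherwise decompose $I(j)=I_0\cup\{\bullet^+\}$ where $\bullet^+$ is the $i$-stratifier in $I(j)$; by Lemma \ref{decomposingMtotheI} the interpretations of $\Pr i$ in $\M^{I(j)}_{\str{\U}}$ and $(\M_{\str{\U}})^+$ agree, and by Lemma \ref{structurecollapsingmagic} it suffices to establish $\M_{\str{\U}}\models(\Pr i\phi)^+[s]\equiv \Prr\alpha i\phi^+[s]$ for the appropriate $\alpha$, i.e., $U_i\cap\alpha\models(\phi^+)^s$. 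This holds trivially because Lemma \ref{stratifiersrespectvalidity} guarantees $\phi^+$ is valid.

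The main subtlety is precisely this last case of (3): the unadorned $\Pr i\phi$ asserted by $T_j$ gets routed through the $i$-stratifier that happens to live in $I(j)$ into a superscripted claim about $U_i\cap\alpha$, and the argument only succeeds because stratifiers preserve validity, so no matter where $\alpha$ lands in $\epom$ the restricted theory still entails $(\phi^+)^s$. The other obstacle worth flagging is a bookkeeping one: the lemma must split Assigned Strativalidity into an $\omega$-indexed family because the stratified version of Assigned Validity intended for $T_i$ must itself be $i$-stratified, and the chosen decomposition makes $i$-Strativalidity and $i$-Validity package naturally with $i$-Assigned Strativalidity as in Lemma \ref{firstutilbagvalidity}.
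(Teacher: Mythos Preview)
Your proof is correct and follows essentially the same approach as the paper: verify unistratifiedness via Corollary \ref{hpreserversvalidity}, then check the semantic condition with the same case split in (3) on whether $i\in\indices(I(j))$. The only notable difference is a harmless shortcut: in (2) and (3) you conclude $U_i\cap\alpha\models\phi^s$ (resp.\ $U_i^-\models\phi^s$) directly from validity of $\phi^s$, whereas the paper instead argues that $(\phi^+)^s$ is an actual \emph{member} of $U_i\cap\alpha$ via $i$-Assigned Strativalidity; both routes work, and yours is slightly more economical.
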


\begin{proof}
For unistratifiedness, use Corollary \ref{hpreserversvalidity}.
Recursive enumerability follows from the fact that $\prec$ is r.e.
In each case below, let $\U=(U_k)_{k\in\omega}$ be a straticlosed r.e.~family
extending the family in question. For brevity, let $\M=\M_{\str\U}$.

\item
(1)
Let $I(i)$ be any computable stratifier-set above $i$, let $\phi$ be any
valid $i$-stratified formula, and let $s$ be any assignment.
Since $\phi$ is valid, $\M^{I(i)}\models \phi[s]$.
By Lemma \ref{raisingMtotheIpreservesintent}, $\M^{I(i)}\models\phi^s$,
as desired.

\item
(2)
Let $I(i)$ be any computable stratifier-set above $i$.
By (1), $\M^{I(i)}\models \mbox{$i$-Assigned Strativalidity}$.
We must show $\M^{I(i)}\models \ucl{\Prr\alpha i\phi}$, where $\phi$
is any valid $\LPA(\indset)$-formula and $\alpha<\epom$ is any ordinal
such that $\Prr\alpha i\phi$ is $i$-stratified.
Let $s$ be any assignment. Since $U_i$ contains
$i$-Assigned Strativalidity, in particular $U_i$ contains $\phi^s$.
Since $\Prr\alpha i\phi$ is $i$-stratified, $\alpha$ exceeds all the
superscripts in $\phi$ (hence in $\phi^s$), so $U_i\cap\alpha\models \phi^s$.
By definition of $\M$, this means $\M\models\Prr\alpha i\phi[s]$.
By Definition \ref{moduloidefn}, $\M$ and $\M^{I(i)}$ agree on $\Prr\alpha i$,
so $\M^{I(i)}\models\Prr\alpha i\phi[s]$, as desired.

\item
(3)
By (1), $\M^{I(i)}\models\mbox{$i$-Assigned Strativalidity}$
for every computable stratifier-set $I(i)$ above $i$.
Let $J(j)$ be a computable stratifier-set above $j$, we must
show $\M^{J(j)}\models \mbox{$i$-Validity}$.
Let $\phi$ be a valid $\LPA(\omega)$-formula, $s$
an assignment.

\item
\case1
$i\not\in\indices(J(j))$.
Then $\M^{J(j)}$ and $\M$ agree on $\Pr i$.
Let $\bullet^+$ be an $i$-stratifier.
Since $\phi$ is valid, so is $\phi^+$
(by Lemma \ref{stratifiersrespectvalidity}),
so $(\phi^+)^s\in U_i$ (since $[\mbox{$i$-Assigned Strativalidity}]_i$ is
part of line 3).
Clearly $((\phi^+)^s)^-\equiv\phi^s$, so $\phi^s\in U^-_i$,
thus $\M\models \Pr i\phi[s]$, and so does $\M^{J(j)}$.

\item
\case2
$i\in\indices(J(j))$.
Thus $j\prec i$ and we can write $J(j)=J_0\cup\{\bullet^+\}$
for some computable $i$-stratifier $\bullet^+$.
By Lemma \ref{decomposingMtotheI}, $\M^{J(j)}$ and
$\M^+$ agree on $\Pr i$.
Let $\alpha\in\epom$ be such that
$(\Pr i\phi)^+\equiv \Prr\alpha i\phi^+$.
As in Case 1, $(\phi^+)^s$ is an instance of
$i$-Assigned Strativalidity,
so $(\phi^+)^s\in U_i$
(since $[\mbox{$i$-Assigned Strativalidity}]_i$ is
part of line 3).
In fact by choice of $\alpha$, $(\phi^+)^s\in U_i\cap \alpha$,
so $\M\models \Prr\alpha i\phi^+[s]$,
that is, $\M\models (\Pr i\phi)^+[s]$.
By Lemma \ref{structurecollapsingmagic},
$\M^+\models\Pr i\phi[s]$.
Since $\M^{J(j)}$ and $\M^+$ agree on $\Pr i$,
$\M^{J(j)}\models\Pr i\phi[s]$.
\end{proof}

In Lemma \ref{secondutilbagdeduction} above, we had to modify what $T_i$
says about $j$-Deduction for
$i\prec j$. No such modification is needed in the following lemma. This is
interesting because in modal logic, positive introspection is generally
considered much more controversial and demanding than basic deduction.

\begin{lemma}
\label{secondutilbagintrospection}
(Compare Lemma \ref{firstutilbagintrospection})
For any $i,j\in\omega$, each of the following families is straticlosed-r.e.-generic.
\begin{enumerate}
\item $[\mbox{$i$-Assigned Strativalidity}]_i \cup [\mbox{$i$-Strativalidity}]_i \cup [\mbox{$i$-Stratideduction}]_i
\cup [\mbox{$i$-Introspection}]_j$ ($j\not=i$).
\item $[\mbox{$i$-Assigned Strativalidity}]_i \cup [\mbox{$i$-Strativalidity}]_i \cup [\mbox{$i$-Stratideduction}]_i
\cup [S]_i$ where $S$ is:
\[\mbox{($i$-Stratrospection) $\ucl{\Prr\alpha i\phi\rightarrow \Prr\beta
i\Prr\alpha i\phi}$
whenever this is $i$-stratified.}\]
\end{enumerate}
\end{lemma}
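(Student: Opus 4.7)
The plan is to closely follow Lemma \ref{firstutilbagintrospection}, substituting its appeals to $i$-Validity, $i$-Deduction, and $\Pr i$-closure with the corresponding stratified schemas and the machinery from Section \ref{stratificationsection}. Unistratifiedness of each listed schema is routine: the schemas either have no superscripts (so uniformity is vacuous) or have superscripts whose structure is preserved by coverings. Recursive enumerability follows from Theorem \ref{completecompact} together with the decidability of being $i$-stratified and the assumed r.e.-ness of $\prec$. Fix a straticlosed r.e.\ family $\U=(U_k)_{k\in\omega}$ extending the family in question and set $\M=\M_{\str\U}$. By Lemmas \ref{secondutilbagvalidity} and \ref{secondutilbagdeduction} together with Lemma \ref{straticlosedstitching}, the first three schemas are already handled, so the only new content is verifying the last schema in each part.

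For part (2), fix a computable stratifier-set $I(i)$ above $i$ and an instance $\ucl{\Prr\alpha i\phi\rightarrow\Prr\beta i\Prr\alpha i\phi}$ of $i$-Stratrospection, which being $i$-stratified forces $\beta>\alpha>\onset(\phi)$. Since $\M^{I(i)}$ and $\M$ agree on every $\Prr\alpha i$ by Definition \ref{moduloidefn}, assuming $\M^{I(i)}\models\Prr\alpha i\phi[s]$ yields $U_i\cap\alpha\models\phi^s$. A direct application of Theorem \ref{proofstratification}(1), using that $\Prr\alpha i\phi^s$ is $i$-stratified and $\beta>\alpha$, gives $U_i\cap\beta\models \Prr\alpha i\phi^s \equiv (\Prr\alpha i\phi)^s$, whence $\M^{I(i)}\models\Prr\beta i\Prr\alpha i\phi[s]$.

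Part (1) is the substantive case. Fix a computable $J(j)$ above $j$, and suppose $\M^{J(j)}\models\Pr i\phi[s]$ for an $\LPA(\omega)$-formula $\phi$. The first step is to reduce to $U^-_i\models\phi^s$. If $i\not\in\indices(J(j))$, this is immediate, since $\M^{J(j)}$ and $\M$ agree on $\Pr i$ by Definition \ref{moduloidefn}. Otherwise, decompose $J(j)=J_0\cup\{\bullet^+\}$ with $\bullet^+$ a computable $i$-stratifier; Lemma \ref{decomposingMtotheI} yields agreement of $\M^{J(j)}$ and $\M^+$ on $\Pr i$, and Lemma \ref{structurecollapsingmagic} rewrites the hypothesis as $\M\models(\Pr i\phi)^+[s]$, giving $U_i\models(\phi^s)^+$ (using that substitution commutes with stratification); the forward direction of Theorem \ref{proofstratification}(3), which works for any $i$-stratified formula, then yields $U^-_i\models\phi^s$.

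The remaining step is the heart of the argument: deduce $U^-_i\models \Pr i(\phi^s)$, which will give $\M\models\Pr i\Pr i\phi[s]$ and hence $\M^{J(j)}\models\Pr i\Pr i\phi[s]$, since $\Pr i\phi$ is not an $\LPA(\omega)$-formula so Definition \ref{moduloidefn} forces agreement of $\M^{J(j)}$ with $\M$ on $\Pr i\Pr i\phi$. Let $\bullet^*$ denote the $i$-veristratifier and set $\psi\equiv(\phi^s)^*$, which is very $i$-stratified and satisfies $\psi^-\equiv\phi^s$. By Theorem \ref{proofstratification}(3), $U_i\models\psi$. Let $\alpha=\epsilon_0\cdot n$ be the least positive multiple of $\epsilon_0$ exceeding $\onset(\psi)$, so that $(\Pr i\phi^s)^*\equiv\Prr\alpha i\psi$. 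Theorem \ref{collapsetheorem}(1) gives $U_i\cap\alpha\models\psi$, Theorem \ref{proofstratification}(1) then gives $U_i\models\Prr\alpha i\psi\equiv(\Pr i\phi^s)^*$, and a final application of Theorem \ref{proofstratification}(3) yields $U^-_i\models\Pr i\phi^s$. The main obstacle is the bookkeeping in this closing chain: one must carefully maintain very $i$-stratifiedness of the formulas produced at each step so that both the Collapse Theorem and the backward direction of Theorem \ref{proofstratification}(3) remain applicable, while simultaneously keeping the superscript chosen for the outer $\Pr i$ consistent with the one produced by the $i$-veristratifier.
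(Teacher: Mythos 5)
Your treatment of part (2) and your reduction step in part (1) are fine, but the closing sentence of part (1) contains a genuine error that breaks the argument precisely in the case you most need it.

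You write that after deducing $\M\models\Pr i\Pr i\phi[s]$, you get $\M^{J(j)}\models\Pr i\Pr i\phi[s]$ ``since $\Pr i\phi$ is not an $\LPA(\omega)$-formula so Definition \ref{moduloidefn} forces agreement of $\M^{J(j)}$ with $\M$ on $\Pr i\Pr i\phi$.'' But $\phi$ is an $\LPA(\omega)$-formula by hypothesis (we are verifying $i$-Introspection as a schema over $\LPA(\omega)$-formulas), so $\Pr i\phi$ \emph{is} an $\LPA(\omega)$-formula. When $i\in\indices(J(j))$, clause (a) of Definition \ref{moduloidefn} therefore applies, not clause (b), and it says $\M^{J(j)}\models\Pr i\Pr i\phi[s]$ if and only if $\M\models(\Pr i\Pr i\phi)^+[s]$ for the particular $i$-stratifier $\bullet^+\in J(j)$. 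What you have established is $\M\models\Pr i\Pr i\phi[s]$, and these are not the same thing: $(\Pr i\Pr i\phi)^+$ is $i$-stratified but generally \emph{not} very $i$-stratified (the ordinals that $\bullet^+$ assigns need not be positive multiples of $\epsilon_0$), so neither the Stratification Theorem nor Theorem \ref{proofstratification}(3) bridges the gap.

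This is exactly the subtlety the paper's proof is structured around. By first collapsing to $U^-_i\models\phi^s$ and then re-raising with the $i$-veristratifier, you lose the connection to the specific $\bullet^+$ in $J(j)$. The paper's Case 2 ($i\in\indices(J(j))$) instead never leaves $\bullet^+$: it goes from $\M^+\models\Pr i\phi[s]$ to $\M\models(\Pr i\phi)^+[s]$ (Lemma \ref{structurecollapsingmagic}), pushes the superscript $\alpha$ from $(\Pr i\phi)^+\equiv\Prr\alpha i\phi^+$ up to $\beta$ using Theorem \ref{proofstratification}(1) where $(\Pr i\Pr i\phi)^+\equiv\Prr\beta i\Prr\alpha i\phi^+$, lands directly on $\M\models(\Pr i\Pr i\phi)^+[s]$, and only then invokes Lemma \ref{structurecollapsingmagic} and Lemma \ref{decomposingMtotheI} to reach $\M^{J(j)}\models\Pr i\Pr i\phi[s]$. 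Your veristratifier detour produces the wrong target formula for this case. To repair the argument you would either have to prove (which the paper does not) that $\M$ interprets $\Pr i$ identically through any $i$-stratifier, or split into the two cases $i\in\indices(J(j))$ and $i\not\in\indices(J(j))$ and track the given $\bullet^+$ in the former, as the paper does.
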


\begin{proof}
For unistratifiedness, use Corollary \ref{hpreserversvalidity}.
Recursive enumerability follows from the fact that $\prec$ is r.e.
In each case below, let $\U=(U_k)_{k\in\omega}$ be a straticlosed r.e.~family
extending the family in question. For brevity, let $\M=\M_{\str\U}$.

\item
(1)
By Lemma \ref{secondutilbagdeduction} (part 1)
and Lemma \ref{secondutilbagvalidity} (part 2),
$\M^{I(i)}\models(\mbox{$i$-Assigned Strativalidity})
\cup (\mbox{$i$-Strativalidity})
\cup (\mbox{$i$-Stratideduction})$
for every computable stratifier-set
$I(i)$ above $i$.
Let $J(j)$ be a computable stratifier-set above $j$, we must show
$\M^{J(j)}\models \mbox{$i$-Introspection}$.
In other words, we must show $\M^{J(j)}\models \ucl{\Pr i\phi\rightarrow \Pr i\Pr i\phi}$
for any $\LPA(\omega)$-formula $\phi$.
Let $s$ be any assignment and assume $\M^{J(j)}\models \Pr i\phi[s]$.

\item
\case1
$i\not\in\indices(J(j))$.
Then $\M^{J(j)}$ and $\M$ agree on $\Pr i$.
Thus $\M\models \Pr i\phi[s]$.
Let $\bullet^+$ be the $i$-veristratifier.
By Theorem \ref{stratificationtheorem},
$\M\models (\Pr i\phi)^+[s]$.
Let $\alpha$ be such that $(\Pr i\phi)^+\equiv \Prr\alpha i\phi^+$,
so $\M\models\Prr\alpha i\phi^+[s]$.
By definition, this means $U_i\cap\alpha\models (\phi^+)^s$.
Let $\beta$ be such that $(\Pr i\Pr i\phi)^+\equiv \Prr\beta i \Prr\alpha i\phi^+$,
so $\beta>\alpha$.
By Part 1 of Theorem \ref{proofstratification},
$U_i\cap\beta\models\Prr\alpha i (\phi^+)^s$.
Thus $\M\models \Prr\beta i\Prr\alpha i \phi^+[s]$.
By Theorem \ref{stratificationtheorem},
$\M\models (\Prr\beta i\Prr\alpha i\phi^+)^-[s]$, that is,
$\M\models \Pr i\Pr i\phi[s]$. Since $\M$ and $\M^{J(j)}$ agree on $\Pr i$,
$\M^{J(j)}\models \Pr i\Pr i\phi[s]$, as desired.

\item
\case2
$i\in\indices(J(j))$.
Thus $j\prec i$ and we can write $J(j)=J_0\cup\{\bullet^+\}$
for some computable $i$-stratifier $\bullet^+$.
By Lemma \ref{decomposingMtotheI}, $\M^{J(j)}$ and
$\M^+$ agree on $\Pr i$.
Thus $\M^+\models \Pr i\phi[s]$.
By Lemma \ref{structurecollapsingmagic},
$\M\models (\Pr i\phi)^+[s]$.
Let $\alpha$ be such that $(\Pr i\phi)^+\equiv \Prr\alpha i\phi^+$,
so $\M\models\Prr\alpha i\phi^+[s]$.
By definition of $\M$, this means
$U_i\cap\alpha\models(\phi^+)^s$.
Let $\beta$ be such that $(\Pr i \Pr i\phi)^+\equiv \Prr\beta i\Prr\alpha i\phi^+$,
so $\beta>\alpha$.
By Part 1 of Theorem \ref{proofstratification},
$U_i\cap\beta\models\Prr\alpha i(\phi^+)^s$.
Thus $\M\models\Prr\beta i\Prr\alpha i\phi^+[s]$.
In other words, $\M\models (\Pr i\Pr i\phi)^+[s]$.
By Lemma \ref{structurecollapsingmagic},
$\M^+\models \Pr i\Pr i\phi[s]$.
Since $\M^+$ and $\M^{J(j)}$ agree on $\Pr i$,
$\M^{J(j)}\models \Pr i\Pr i\phi[s]$, as desired.

\item
(2)
Let $I(i)$ be any computable stratifier-set above $i$,
we must show $\M^{I(i)}\models
\ucl{\Prr\alpha i\phi\rightarrow\Prr\beta i\Prr\alpha i\phi}$
assuming this is $i$-stratified (so $\beta>\alpha$).
Let $s$ be any assignment and assume $\M^{I(i)}\models\Prr\alpha i\phi[s]$.
By Definition \ref{moduloidefn}, $\M^{I(i)}$ and $\M$ agree on $\Prr\alpha i$,
so $\M\models\Prr\alpha i\phi[s]$.
By definition of $\M$, this means $U_i\cap\alpha\models \phi^s$.
By Part 1 of Theorem \ref{proofstratification},
$U_i\cap\beta\models \Prr\alpha i\phi^s$.
Thus, $\M\models \Prr\beta i\Prr\alpha i\phi[s]$,
and thus so does $\M^{I(i)}$ since it agrees with $\M$ on $\Prr\beta i$.
\end{proof}

For the next lemma, note that the proof shows more than is necessary,
namely that the structures in question satisfy all the axioms of Peano arithmetic
for $\LPA(\indset)$, not just the $i$-stratified ones. But of course, the full
set of Peano axioms for $\LPA(\indset)$ is not $i$-stratified.

\begin{lemma}
\label{secondutilbagarithmetic}
(Compare Lemma \ref{firstutilbagarithmetic})
For any $i\in\omega$, $[S]_i$ is straticlosed-r.e.-generic,
where $S$ is the set of those axioms of Peano arithmetic for
$\LPA(\indset)$ that are $i$-stratified.
\end{lemma}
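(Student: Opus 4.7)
The plan is to reduce the claim to the combination of Lemma \ref{raisingMtotheIpreservesintent} and Lemma \ref{extendedpalemma} (which ensures any structure that interprets formulas by substitution models the full Peano schema for $\LPA(\indset)$). According to Definition \ref{bootstrapclosedrestrat}, there are three things to check: that $[S]_i$ is r.e., that $[S]_i$ is unistratified in each component, and the generic satisfaction condition.

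Recursive enumerability is immediate: the axioms of Peano arithmetic for $\LPA(\indset)$ are r.e., and $i$-stratification is a decidable syntactic property of formulas, so $S$ is r.e. The trivial components of $[S]_i$ (the empty theories at $k\neq i$) are vacuously $k$-unistratified, so I only need to verify that $S$ itself is $i$-unistratified. The non-induction PA axioms are pure $\LPA$-formulas, untouched by any covering $h$ and automatically $i$-stratified. For an induction instance $\ucl{\psi(x|0)\rightarrow \forall x(\psi\rightarrow\psi(x|S(x)))\rightarrow\forall x\psi}$ that is $i$-stratified (so $\psi$ is), Definition \ref{applyinghtophi} commutes with the logical connectives and with substitution of a variable (since $h$ does not touch the $\LPA$-part), so $h$ applied to such an induction instance is the induction instance for $h(\psi)$; and because $h$ is order preserving, $h(\psi)$ is again $i$-stratified. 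Hence $h(\phi)\in S$.

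For the main condition, fix a straticlosed r.e.\ family $\U\supseteq [S]_i$, an index $k\in\omega$, and a computable stratifier-set $I(k)$ above $k$. For $k\neq i$ there is nothing to show since the $k$-th component of $[S]_i$ is empty. So suppose $k=i$ and I must prove $\M^{I(i)}_{\str\U}\models S$. Since $\M_{\str\U}$ is defined by Definition \ref{intendedstructdefn}, Lemma \ref{raisingMtotheIpreservesintent} tells me that $\M^{I(i)}_{\str\U}$ interprets formulas by substitution. Lemma \ref{extendedpalemma} (applied to $\LPA(\indset)$ in place of $\LPA(I)$, which the proof supports verbatim) then yields that $\M^{I(i)}_{\str\U}$ satisfies every axiom of Peano arithmetic for $\LPA(\indset)$; in particular it satisfies the $i$-stratified ones, which is exactly $S$.

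The only mildly subtle step is the unistratified check, since one might worry that substituting via $h$ disturbs induction instances; but because $h$ acts only on superscripts of $\Prr{\alpha}{i}$ and never on the $\LPA$-part or on bound variables, it commutes with the term substitutions $(x|0)$ and $(x|S(x))$ syntactically, so the induction scheme is preserved on the nose. Beyond that, nothing new is needed — the lemma is essentially a packaging of Lemmas \ref{raisingMtotheIpreservesintent} and \ref{extendedpalemma}.
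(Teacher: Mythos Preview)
Your proof is correct and follows essentially the same approach as the paper: reduce the satisfaction condition to Lemma \ref{raisingMtotheIpreservesintent} followed by Lemma \ref{extendedpalemma}. The paper dismisses unistratifiedness and recursive enumerability as ``clear'' whereas you spell out the unistratifiedness check in detail, but the core argument is identical.
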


\begin{proof}
Unistratifiedness and recursive enumerability are clear.
Let $\U=(U_k)_{k\in\omega}$ be a straticlosed r.e.~family
extending $[S]_i$.
By Lemma \ref{raisingMtotheIpreservesintent}, $\M^{I(i)}_{\str\U}$ interprets formulas
by substitution. By Lemma \ref{extendedpalemma}, $\M^{I(i)}_{\str\U}$ satisfies the axioms
of Peano Arithmetic for $\LPA(\indset)$, as desired.
\end{proof}

\begin{lemma}
\label{secondutilbagsmt}
(Compare Lemma \ref{firstutilbagsmt})
For any $i,j\in\omega$, each of the following families is straticlosed-r.e.-generic.
\begin{enumerate}
\item $[\mbox{$j$-SMT}]_i$ ($j\not=i$).
\item $[S]_i$, where $S$ is: ($i$-Strati-SMT)
$\ucl{\exists e\forall x(\Prr\alpha i\phi\leftrightarrow x\in W_e)}$
when this is $i$-stratified, $e\not\in\FV(\phi)$.
\end{enumerate}
\end{lemma}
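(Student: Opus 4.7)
The plan is to verify, for each of the two families, the three requirements of Definition \ref{bootstrapclosedrestrat}: recursive enumerability, $i$-unistratifiedness, and the semantic closure condition of being satisfied in $\M^{I(i)}_{\str\U}$ for every straticlosed r.e.\ extension $\U$. Recursive enumerability follows from Theorem \ref{completecompact} together with decidability of $i$-stratifiedness and of comparison on $\epom$. For part 2, unistratifiedness is the one point needing verification: if $\sigma$ is an $i$-stratified instance of $i$-Strati-SMT with $\alpha>\onset(\phi)$ and $h$ is a covering with $\onset(\sigma)\subseteq\mathrm{dom}(h)$, then $h(\sigma)$ has the same shape with $h(\alpha)>\onset(h(\phi))$ by order preservation, and remains $i$-stratified by an induction on subformulas. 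For part 1 each instance is an $\LPA(\omega)$-formula (since $\Pr j\phi$ must be $i$-stratified with $j\ne i$, forcing $\phi$ to be $\LPA(\omega)$), so unistratifiedness is automatic.

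The semantic heart is to produce, for each instance of the schema and each assignment $s$, an r.e.\ index $n$ matching the extension in $x$ of the relevant operator. Fix a straticlosed r.e.\ superset $\U$, a computable stratifier-set $I(i)$ above $i$, and put $\M=\M_{\str\U}$. For part 2, since $I(i)$ is above $i$ we have $i\notin\indices(I(i))$, so by Definition \ref{moduloidefn} the structures $\M^{I(i)}$ and $\M$ agree on $\Prr\alpha i$. Hence $\M^{I(i)}\models \Prr\alpha i\phi[s(x|m)]$ iff $U_i\cap\alpha\models\phi^{s(x|m)}$. Since $U_i$ is r.e.\ and the predicate $\onset(\psi)\subseteq\alpha$ is decidable, $U_i\cap\alpha$ is r.e.; by Theorem \ref{completecompact} the set of $m$ satisfying the right-hand side is r.e., so the $S$-$m$-$n$ theorem yields an $n$ with $W_n$ equal to that set, witnessing the existential exactly as in the proof of Lemma \ref{firstutilbagsmt}.

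Part 1 follows the same template but requires a case split on whether $j\in\indices(I(i))$. If $j\notin\indices(I(i))$, then $\M^{I(i)}$ and $\M$ agree on $\Pr j$, so the condition reduces to $U^-_j\models\phi^{s(x|m)}$; since $U^-_j$ is r.e., $S$-$m$-$n$ supplies $n$. If $j\in\indices(I(i))$, write $I(i)=I_0\cup\{\bullet^+\}$ for the unique computable $j$-stratifier $\bullet^+$ in $I(i)$; by Lemma \ref{decomposingMtotheI}, $\M^{I(i)}$ and $\M^+$ agree on $\Pr j$, and Lemma \ref{structurecollapsingmagic} rewrites $\M^+\models\Pr j\phi[s(x|m)]$ as $\M\models(\Pr j\phi)^+[s(x|m)]$. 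Writing $(\Pr j\phi)^+\equiv\Prr\alpha j\phi^+$, this becomes $U_j\cap\alpha\models(\phi^+)^{s(x|m)}$, which is r.e.\ in $m$ because $\bullet^+$ is computable and $U_j\cap\alpha$ is r.e.; $S$-$m$-$n$ then finishes. The main obstacle I anticipate is the bookkeeping in this last subcase, where the stratifier simultaneously attaches a superscript $\alpha$ to the outer operator and transforms the scope; the key observation that unlocks it is that computability of $\bullet^+$ makes the map $m\mapsto(\phi^+)^{s(x|m)}$ computable, so the $S$-$m$-$n$ argument from Lemma \ref{firstutilbagsmt} applies verbatim.
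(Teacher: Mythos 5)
Your proof is correct and follows essentially the same approach as the paper: for part 2 you use agreement of $\M^{I(i)}$ and $\M$ on $\Prr\alpha i$ together with the $S$-$m$-$n$ theorem applied to $U_i\cap\alpha$, and for part 1 you split on whether $j\in\indices(I(i))$, handling the latter case via Lemma \ref{decomposingMtotheI}, Lemma \ref{structurecollapsingmagic}, and the identity $(\Pr j\phi)^+\equiv\Prr\alpha j\phi^+$, exactly as the paper does. One tiny cosmetic point: the agreement of $\M^{I(i)}$ and $\M$ on $\Prr\alpha i$ in part 2 holds unconditionally by Definition \ref{moduloidefn} (which never reinterprets superscripted operators), so the observation that $i\notin\indices(I(i))$ is not actually needed there.
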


\begin{proof}
Unistratifiedness and recursive enumerability are clear.
In each case below, let $\U=(U_k)_{k\in\omega}$ be a straticlosed r.e.~family
extending the family in question. For brevity, let $\M=\M_{\str\U}$.

\item
(1)
Let $I(i)$ be any computable stratifier-set above $i$.
We must show $\M^{I(i)}\models\ucl{\exists e\forall x(\Pr j\phi \leftrightarrow
x\in W_e)}$ for every $\LPA(\omega)$-formula $\phi$ with $e\not\in\FV(\phi)$.
Let $s$ be an assignment and let $\{x_1,\ldots,x_k\}=\FV(\phi)\backslash \{x\}$.

\item
\case1
$j\not\in \indices(I(i))$.
Then $\M^{I(i)}$ and $\M$ agree on $\Pr j$.
Since $U^-_j$ is r.e., by the $S$-$m$-$n$ theorem there is some $n$ such that
$W_n=\{m\,:\,U^-_j\models \phi(x|\overline m)(x_1|\overline{s(x_1)})\cdots
(x_k|\overline{s(x_k)})\}$.
Since $e\not\in\FV(\phi)$ and $\M$ has standard first-order part, it follows
that $\M\models \forall x(\Pr j \phi\leftrightarrow x\in W_e)[s(e|n)]$.
By first-order semantics,
$\M\models \exists e\forall x(\Pr j\phi\leftrightarrow x\in W_e)[s]$.
Since $\M$ and $\M^{I(i)}$ agree on $\Pr j$,
$\M^{I(i)}\models \exists e\forall x(\Pr j\phi\leftrightarrow x\in W_e)[s]$,
as desired.

\item
\case2
$j\in\indices(I(i))$.
Thus $i\prec j$ and we can write $I(i)=I_0\cup\{\bullet^+\}$
for some computable $j$-stratifier $\bullet^+$.
By Lemma \ref{decomposingMtotheI}, $\M^{I(i)}$ and
$\M^+$ agree on $\Pr j$.
Let $\alpha$ be such that $(\Pr j\phi)^+\equiv \Prr\alpha j\phi^+$.
Since $U_j\cap\alpha$ is r.e., by the $S$-$m$-$n$ theorem there is some $n$ such that
$W_n=\{m\,:\,U_j\cap\alpha\models\phi^+(x|\overline m)(x_1|\overline{s(x_1)})\cdots
(x_k|\overline{s(x_k)})\}$.
Since $e\not\in\FV(\phi)$ (thus $e\not\in\FV(\phi^+)$), and since $\M$ has standard
first-order part, it follows that
$\M\models \forall x(\Prr\alpha j\phi^+\leftrightarrow x\in W_e)[s(e|n)]$.
By first-order semantics,
$\M\models\exists e\forall x(\Prr\alpha j\phi^+\leftrightarrow x\in W_e)[s]$.
In other words,
$\M\models(\exists e\forall x(\Pr j\phi\leftrightarrow x\in W_e))^+[s]$.
By Lemma \ref{structurecollapsingmagic},
$\M^+\models \exists e\forall x(\Pr j\phi\leftrightarrow x\in W_e)[s]$.
Since $\M^+$ and $\M^{I(i)}$ agree on $\Pr j$,
$\M^{I(i)}\models \exists e\forall x(\Pr j\phi\leftrightarrow x\in W_e)[s]$,
as desired.

\item
(2)
Let $I(i)$ be any computable stratifier-set above $i$,
we must show $\M^{I(i)}\models\ucl{\exists e\forall x(\Prr\alpha i \phi\leftrightarrow
x\in W_e)}$ for every $\LPA(\indset)$-formula $\phi$ such that
this is $i$-stratified and $e\not\in\FV(\phi)$.
Let $s$ be any assignment
and let $\{x_1,\ldots,x_k\}=\FV(\phi)\backslash \{x\}$.
Since $U_i\cap\alpha$ is r.e., by the $S$-$m$-$n$ theorem there is some $n$
such that
$W_n=\{m\,:\,U_i\cap\alpha\models \phi(x|\overline m)(x_1|\overline{s(x_1)})\cdots
(x_k|\overline{s(x_k)})\}$.
Since $e\not\in\FV(\phi)$, and since $\M$ has standard first-order part,
it follows that
$\M\models \exists e\forall x(\Prr\alpha i\phi \leftrightarrow x\in W_e)[s]$.
By Definition \ref{moduloidefn}, $\M$ and $\M^{I(i)}$ agree on $\Prr\alpha i$,
so $\M^{I(i)}\models \exists e\forall x(\Prr\alpha i\phi \leftrightarrow x\in W_e)[s]$,
as desired.
\end{proof}

If $\T=(T_k)_{k\in\omega}$ is straticlosed-r.e.-generic, we cannot simply take
an axiom $\phi$ from $T_j$ and insert $\Pr j\phi$ into $T_i$ without violating
straticlosed-r.e.-genericness, because such a $\phi$ is not necessarily
$i$-stratified. Thus, the following lemma has a somewhat more complicated
structure than Lemma \ref{firstutilbagclosure}.

\begin{lemma}
\label{secondutilbagclosure}
(Compare Lemma \ref{firstutilbagclosure})
Let $i,j\in\omega$ and suppose $\T=(T_k)_{k\in\omega}$ is straticlosed-r.e.-generic.
Then each of the following families is straticlosed-r.e.-generic.
\begin{enumerate}
\item $\T\cup [S]_i$ where
$S$ is the schema $\Prr\alpha i\phi$ ($\phi\in T_i$ such that this is $i$-stratified).
\item $\T\cup [S]_i$ where
$S$ is the schema $\Pr j\phi^-$ ($\phi\in T_j$, $j\prec i$).
\end{enumerate}
\end{lemma}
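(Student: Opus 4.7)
The plan is to verify the three conditions defining straticlosed-r.e.-genericness---r.e.-ness, $i$-unistratifiedness of each coordinate, and the semantic condition---for $\T\cup[S]_i$ in each case. Unistratifiedness of $T_k$ for $k\not=i$ is inherited from $\T$, so I only need to check it for $T_i\cup S$. In (1), if $\Prr\alpha i\phi$ is $i$-stratified and $h$ is a covering whose domain contains $\onset(\Prr\alpha i\phi)$, then $h(\Prr\alpha i\phi)\equiv\Prr{h(\alpha)}i h(\phi)$, which is $i$-stratified by order-preservation of $h$ and belongs to $S$ since uniformity of $T_i$ gives $h(\phi)\in T_i$. In (2), the sentences $\Pr j\phi^-$ are $\LPA(\omega)$-sentences with no superscripts, so the uniformity clause is vacuous, and because $j\prec i$ forces $j\neq i$ these sentences are $i$-stratified by clause (1) of Definition \ref{IStratifiedDefinition}. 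Recursive enumerability follows from r.e.-ness of $\T$ and of $\prec$. For the semantic condition, fix a straticlosed r.e.~family $\U\supseteq \T\cup[S]_i$ and a computable stratifier-set $I(i)$ above $i$, and write $\M=\M_{\str\U}$. Since $\U\supseteq\T$ and $\T$ is straticlosed-r.e.-generic, $\M^{I(i)}\models T_i$ already, so the only remaining task is to show $\M^{I(i)}\models S$.

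For part (1), fix $\Prr\alpha i\phi\in S$ and an assignment $s$. Since $\phi\in T_i$ is a sentence, $(\Prr\alpha i\phi)^s\equiv\Prr\alpha i\phi$, and by Definition \ref{moduloidefn} the structures $\M^{I(i)}$ and $\M$ agree on $\Prr\alpha i$. It therefore suffices to show $\M\models\Prr\alpha i\phi$, which by the definition of $\M_{\str\U}$ amounts to $U_i\cap\alpha\models\phi$. But $i$-stratification of $\Prr\alpha i\phi$ forces $\onset(\phi)\subseteq\alpha$, so $\phi$ itself already lies in $U_i\cap\alpha$, and the conclusion is immediate.

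For part (2), fix $\Pr j\phi^-\in S$ (so $\phi\in T_j$, $j\prec i$) and an assignment $s$. The key observation is that $j\not\in\indices(I(i))$: otherwise $i\prec j$ by the hypothesis on $I(i)$, and combined with $j\prec i$ this would give $j\prec j$ by transitivity, contradicting the irreflexivity of the partial order $\prec$. Hence $\M^{I(i)}$ and $\M$ agree on $\Pr j$, and since $\phi^-$ is a sentence the target reduces to showing $U^-_j\models\phi^-$. To cross from $U_j$ to $U^-_j$, I would apply Lemma \ref{verystratifiableaxioms} to the $j$-unistratified theory $T_j$ to obtain $\psi\in T_j\subseteq U_j$ with $\psi$ very $j$-stratified and $\psi^-\equiv\phi^-$; then $U_j\models\psi$ trivially, and Theorem \ref{proofstratification}(3), applied to the $j$-straticlosed theory $U_j$, yields $U^-_j\models\psi^-\equiv\phi^-$. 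The main obstacle is really only this last bookkeeping step: the semantic reduction from $\M^{I(i)}$ through $\M$ to $U^-_j$ is forced by Definition \ref{moduloidefn}, but Lemma \ref{verystratifiableaxioms} is needed to replace a possibly non-very-stratified $\phi$ with one to which Theorem \ref{proofstratification}(3) can be applied.
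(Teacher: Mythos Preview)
Your proof is correct and, for part (1), essentially identical to the paper's. For part (2) you take a slightly more direct route: after reducing to $U^-_j\models\phi^-$ and producing a very $j$-stratified $\psi\in U_j$ with $\psi^-\equiv\phi^-$ via Lemma \ref{verystratifiableaxioms}, you invoke Theorem \ref{proofstratification}(3) directly on the $j$-straticlosed theory $U_j$ to conclude $U^-_j\models\psi^-$. The paper instead works semantically through $\M_{\str\U}$: it uses compactness to place $\psi$ in some $U_j\cap\alpha$ with $\alpha$ a positive multiple of $\epsilon_0$, obtains $\M\models\Prr\alpha j\psi$, and then applies the Stratification Theorem (Theorem \ref{stratificationtheorem}) to get $\M\models\Pr j\psi^-$. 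Since the Stratification Theorem is itself proved using Theorem \ref{proofstratification}, your route is a mild shortcut that avoids the detour through compactness and the semantic structure; the paper's route has the virtue of staying uniformly at the level of $\M$. One small omission: for the full genericness condition you should also note (as the paper does) that $\M^{K(k)}\models T_k$ for every $k\in\omega$ and computable stratifier-set $K(k)$ above $k$, not just for $k=i$; this is immediate from straticlosed-r.e.-genericness of $\T$, but strictly speaking the definition quantifies over all coordinates.
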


\begin{proof}
Unistratifiedness and recursive enumerability are clear.
In each case below, let $\U=(U_k)_{k\in\omega}$ be a straticlosed r.e.~family
extending the family in question. For brevity, let $\M=\M_{\str\U}$.

\item
(1)
Since $\T$ is straticlosed-r.e.-generic and $\U\supseteq \T$ is straticlosed and r.e.,
immediately $\M^{J(j)}\models\T$ (by Definition \ref{bootstrapclosedrestrat})
for all $j\in\omega$ and any computable stratifier-set $J(j)$ above $j$.
Let $I(i)$ be any computable stratifier-set above $i$.
Suppose $\phi\in T_i$ and $\alpha\in\epom$ are such that $\Prr\alpha i\phi$
is $i$-stratified, and let $s$ be any assignment.
Since $U_i\supseteq T_i$, $\phi\in U_i$, in fact since $\Prr\alpha i\phi$ is $i$-stratified,
it follows that $\phi\in U_i\cap\alpha$. Since $\phi$ is a sentence,
$\phi\equiv \phi^s$, and so
$U_i\cap\alpha\models\phi^s$, and so $\M\models \Prr\alpha i\phi[s]$.
By Definition \ref{moduloidefn}, $\M^{I(i)}$ agrees with $\M$ on $\Prr \alpha i$,
so $\M^{I(i)}\models \Prr\alpha i\phi[s]$, as desired.

\item
(2)
Since $\T$ is straticlosed-r.e.-generic and $\U\supseteq \T$ is straticlosed and r.e.,
immediately $\M^{K(k)}\models\T$ (by Definition \ref{bootstrapclosedrestrat})
for all $k\in\omega$ and any computable stratifier-set $K(k)$ above $k$.
Let $I(i)$ be any computable stratifier-set above $i$.
Suppose $\phi\in T_j$ where $j\prec i$. Let $s$ be any assignment.
Since $U_j\supseteq T_j$, $\phi\in U_j$.
By Lemma \ref{verystratifiableaxioms}, there is some very $j$-stratified
$\psi\in U_j$ such that $\psi^-\equiv\phi^-$.
Clearly since $\phi$ is a sentence, so is $\psi$.
By compactness, there is some positive integer multiple $\alpha$ of $\epsilon_0$
such that $U_j\cap\alpha\models \psi$. Since $\psi$ is a sentence,
$\psi\equiv \psi^s$ and thus
$U_j\cap\alpha\models \psi^s$.
Thus, $\M\models\Prr\alpha j \psi[s]$.
By Theorem \ref{stratificationtheorem},
$\M\models\Pr j\psi^-[s]$, so by choice of $\psi$, $\M\models\Pr j\phi^-[s]$.
Since $I(i)$ is above $i$ and $j\not\succeq i$, $\M$ and $\M^{I(i)}$
agree on $\Pr j$, so $\M^{I(i)}\models \Pr j\phi^-[s]$, as desired.
\end{proof}

\subsection{Stratifiable-r.e.-generic Building Blocks}

We have established some straticlosed-r.e.-generic building blocks, but
the goal of this paper is to better understand the structure of non-stratified
theories---stratification is only a means to an end.
Therefore, we introduce a corresponding non-stratified building-block notion.

\begin{definition}
If $\T^0=(T^0_i)_{i\in\omega}$ where each $T^0_i$ is an $\LPA(\omega)$-theory,
we say $\T^0$ is \emph{$\prec$-stratifiable-r.e.-generic} (or
\emph{stratifiable-r.e.-generic} if $\prec$ is clear from context)
if there is
some $\prec$-straticlosed-r.e.-generic family $\T=(T_i)_{i\in\omega}$ of
$\LPA(\indset)$-theories such that
each $T^-_i=T^0_i$.
\end{definition}

\begin{lemma}
\label{trivialsourceofstratifiableregenericfamilies}
If $\T=(T_i)_{i\in\omega}$ is any straticlosed-r.e.-generic family of
$\LPA(\indset)$-theories,
then $\T^-=(T^-_i)_{i\in\omega}$ is a stratifiable-r.e.-generic family of
$\LPA(\omega)$-theories.
\end{lemma}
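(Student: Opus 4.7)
The plan is to observe that the hypothesis essentially provides the witness required by the definition of stratifiable-r.e.-genericness, so the proof reduces to a direct unpacking of definitions. Specifically, to show $\T^-=(T^-_i)_{i\in\omega}$ is stratifiable-r.e.-generic, I must exhibit a straticlosed-r.e.-generic family $\T'=(T'_i)_{i\in\omega}$ of $\LPA(\indset)$-theories such that $(T'_i)^-=T^-_i$ for every $i\in\omega$. I will simply take $\T'=\T$.

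First I would verify the typing requirement: each $T^-_i$ must be an $\LPA(\omega)$-theory. This is immediate from the definition of $\bullet^-$, since erasing all superscripts from an $\LPA(\indset)$-formula turns every $\Prr\alpha i$ into $\Pr i$ and leaves the remaining symbols alone, producing an $\LPA(\omega)$-formula; thus $T^-_i=\{\sigma^-\,:\,\sigma\in T_i\}$ is indeed an $\LPA(\omega)$-theory. Next, with $\T'\equiv\T$, the identity $(T'_i)^-=T^-_i$ holds trivially, and $\T'$ is straticlosed-r.e.-generic by hypothesis.

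There is no real obstacle here; the lemma is essentially definitional, existing mainly to flag that every straticlosed-r.e.-generic family gives rise, by stripping superscripts, to a stratifiable-r.e.-generic family in the unstratified language. The substance of the notion lies elsewhere (in the combinatorial building blocks established by Lemmas \ref{secondutilbagdeduction}--\ref{secondutilbagclosure}); this lemma only records that the translation from the stratified to the unstratified setting is inhabited.
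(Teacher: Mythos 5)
Your proof is correct and takes the same approach the paper intends by its one-word proof (``Straightforward.''): the witness required by the definition of stratifiable-r.e.-genericness is $\T$ itself, and the verification is purely definitional.
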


\begin{proof}
    Straightforward.
\end{proof}

\begin{corollary}
\label{stratifiableregenericsummary}
(Compare Corollary \ref{closedregenericsummary})
For all $i,j\in\omega$, each of the following families of $\LPA(\omega)$-theories
is stratifiable-r.e.-generic.
\begin{enumerate}
    \item $[\mbox{$j$-Deduction}]_i$ (if $j\preceq i$).
    \item $[\mbox{Modified $j$-Deduction}]_i$ (if $i\prec j$).
    \item $[\mbox{Assigned Validity}]_i$.
    \item $[\mbox{Assigned Validity}]_i \cup [\mbox{$i$-Validity}]_j$.
    \item $[\mbox{Assigned Validity}]_i
        \cup [\mbox{$i$-Validity}]_i
        \cup [\mbox{$i$-Deduction}]_i
        \cup [\mbox{$i$-Introspection}]_j$.
    \item $[S]_i$ where $S$ is the axioms of Peano Arithmetic for $\LPA(\omega)$.
    \item $[\mbox{$j$-SMT}]_i$.
    \item (If $j\preceq i$)
    $\T\cup [S]_i$, for any stratifiable-r.e.-generic
    $\T=(T_k)_{k\in\omega}$,
    where $S$ is the schema: $\Pr j\phi$ ($\phi\in T_j$).
\end{enumerate}
\end{corollary}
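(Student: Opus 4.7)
The plan is to prove each item by exhibiting a $\prec$-straticlosed-r.e.-generic $\LPA(\indset)$-family whose image under $\bullet^-$ is the given $\LPA(\omega)$-family, after which Lemma \ref{trivialsourceofstratifiableregenericfamilies} yields the desired conclusion. Most items are immediate applications of the lemmas in Section 6. For item 1, when $j\prec i$ we use $[j\text{-Deduction}]_i$ from Lemma \ref{secondutilbagdeduction}(2) directly (it is already an $\LPA(\omega)$-family, so it equals its own minus); when $j=i$ we use $[i\text{-Stratideduction}]_i$ from Lemma \ref{secondutilbagdeduction}(1), whose minus is precisely $[i\text{-Deduction}]_i$. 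Item 2 is a direct quote of Lemma \ref{secondutilbagdeduction}(3). Item 3 follows from Lemma \ref{secondutilbagvalidity}(1), since the minus of $i$-Assigned Strativalidity is Assigned Validity.

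For items 4 and 5, I would split on whether $j=i$ or $j\ne i$. If $j\ne i$, Lemma \ref{secondutilbagvalidity}(3) (respectively Lemma \ref{secondutilbagintrospection}(1)) provides the witness family; its minus has exactly the listed generators once we notice that the minus of $i$-Assigned Strativalidity is Assigned Validity, the minus of $i$-Strativalidity is $i$-Validity, and so on. If $j=i$, I would instead invoke Lemma \ref{secondutilbagvalidity}(2) (respectively Lemma \ref{secondutilbagintrospection}(2)) together with Lemma \ref{straticlosedstitching} to assemble the four components, taking minuses in each. Item 7 is handled analogously: use Lemma \ref{secondutilbagsmt}(1) when $j\ne i$, and use Lemma \ref{secondutilbagsmt}(2) when $j=i$ (the minus of $i$-Strati-SMT is $i$-SMT, because erasing superscripts sends $\Prr\alpha i\phi$ to $\Pr i\phi^-$).

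For item 6, the witness family is the one from Lemma \ref{secondutilbagarithmetic}, namely the $i$-stratified axioms of Peano arithmetic for $\LPA(\indset)$. The step that requires a moment's thought is verifying that the minus of this family is exactly the axioms of Peano arithmetic for $\LPA(\omega)$: each non-induction Peano axiom is already in $\LPA(\omega)$ and is $i$-stratified, and for any instance of induction over an $\LPA(\omega)$-formula $\phi$, the $+$ construction from Lemma \ref{validitylevator} produces a very $i$-stratified $\phi^+$ with $(\phi^+)^-\equiv\phi$, so the induction instance for $\phi^+$ is $i$-stratified and its minus is the induction instance for $\phi$.

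For item 8, I would again split on $j\preceq i$. Fix a straticlosed-r.e.-generic $\T=(T_k)_{k\in\omega}$ with $\T^-=\T^0$. If $j\prec i$, Lemma \ref{secondutilbagclosure}(2) gives that $\T\cup[\Pr j\phi^-\,:\,\phi\in T_j]_i$ is straticlosed-r.e.-generic; its minus is $\T^0\cup[\Pr j\phi\,:\,\phi\in T^0_j]_i$, exactly as required. If $j=i$, I would use Lemma \ref{secondutilbagclosure}(1): the family $\T\cup[\Prr\alpha i\phi\,:\,\phi\in T_i,\,\Prr\alpha i\phi\text{ is }i\text{-stratified}]_i$ is straticlosed-r.e.-generic, and taking minuses gives $\T^0\cup[\Pr i\phi\,:\,\phi\in T^0_i]_i$, using Lemma \ref{verystratifiableaxioms} to see that every $\phi\in T^0_i$ is the minus of some element of $T_i$. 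The main obstacle throughout is simply the bookkeeping: checking case by case that the minus of each straticlosed building block matches the stated unstratified building block, and gluing subfamilies with Lemma \ref{straticlosedstitching} to maintain straticlosed-r.e.-genericness under unions.
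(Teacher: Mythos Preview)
Your proposal is correct and follows essentially the same approach as the paper: exhibit, item by item, a straticlosed-r.e.-generic $\LPA(\indset)$-family from Lemmas \ref{secondutilbagdeduction}--\ref{secondutilbagclosure} whose $\bullet^-$-image is the stated $\LPA(\omega)$-family, then apply Lemma \ref{trivialsourceofstratifiableregenericfamilies}. The paper's own proof is a one-line sketch of exactly this, adding only the remark that Lemma \ref{stratifiersrespectvalidity} supplies the valid $i$-stratified preimages needed for the validity items (you implicitly use this when asserting that the minus of $i$-Assigned Strativalidity is Assigned Validity, etc.); your invocation of Lemma \ref{verystratifiableaxioms} in item 8 is harmless but unnecessary, since $T^0_i=T^-_i$ already gives the required preimages by definition.
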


\begin{proof}
By combining Lemma \ref{trivialsourceofstratifiableregenericfamilies} with
Lemmas \ref{secondutilbagdeduction}--\ref{secondutilbagclosure}.
For parts involving validity, Lemma \ref{stratifiersrespectvalidity} can be
used to provide valid stratified counterparts of valid non-stratified formulas.
\end{proof}

Comparing the stratifiable-r.e.-generic families we exhibited
(Corollary \ref{stratifiableregenericsummary})
with the closed-r.e.-generic families we exhibited
(Corollary \ref{closedregenericsummary}),
we see that the stratifiable-r.e.-generic families are weaker in
exactly two ways:
\begin{enumerate}
    \item They do not
    allow $T_i$ to state $j$-Deduction for $T_j$ when
    $i\prec j$, instead allowing what we called Modified $j$-Deduction.
    \item Their closure property is more restricted:
    if $\T^1=(T^1_k)_{k\in\omega}$ is closed-r.e.-generic
    and $\T^2=(T^2_k)_{k\in\omega}$ is stratifible-r.e.-generic,
    and if $S_1$ is the schema $\Pr j \phi$ ($\phi\in T^1_j$),
    and if $S_2$ is the schema $\Pr j \phi$ ($\phi\in T^2_j$),
    then Corollary \ref{closedregenericsummary} says
    $\T^1\cup [S_1]_i$ is closed-r.e.-generic with no restrictions on $j$,
    whereas Corollary \ref{stratifiableregenericsummary} only says
    that $\T^2\cup [S_2]_i$ is stratifiable-r.e.-generic if $j\prec i$.
\end{enumerate}
We leave it an open question to what
extent Corollary \ref{stratifiableregenericsummary} could be further
strengthened. Our primary motivation in choosing building blocks was
to facilitate
creation of background provability theories at least strong enough to make our
own consistency result (Theorem \ref{generalizedtwoonethree} below) generalize
Carlson's consistency result \cite{carlson2000}.
If that were our lone motivation, we could restrict
Corollary \ref{stratifiableregenericsummary} to only those families
where $i=j$,
but a secondary motivation was to provide inter-theory versions
of those restricted building blocks.

\section[Second Consistency Result: Prioritizing Self-Truth]{Second Consistency Result:\\Prioritizing Self-Truth}

In this section, we continue to fix an r.e.\ well-founded partial-order $\prec$
of $\omega$.
The following theorem will satisfy the second promise from the introduction:
it will exhibit true theories $(T_i)_{i\in\omega}$ such that $T_i$ expresses 
a G\"odel number of $T_j$ ($j\prec i$) and the truth of $T_j$ ($j\preceq 
i$).  These theories can further be taken so that $T_i$ expresses the fact 
that $T_j$ has some G\"odel number (all $i,j$), by Lemma \ref{secondutilbagsmt}.

\begin{theorem}
\label{generalizedtwoonethree}
Let $\T^0=(T^0_i)_{i\in\omega}$ be any
stratifiable-r.e.-generic
family of $\LPA(\omega)$-theories.
For every $i\in\omega$ and $n\in\N$, let $T_i(n)$ be the smallest
$\Pr i$-closed $\LPA(\omega)$-theory containing the following axioms.
\begin{enumerate}
\item The axioms contained in $T^0_i$.
\item Assigned Validity, $i$-Validity and $i$-Deduction.
\item $\ucl{\Pr j\phi\rightarrow\phi}$ whenever $j\preceq i$.
\item $\forall x(\Pr j\phi\leftrightarrow \langle\overline{\ulcorner\phi\urcorner},\overline j,x\rangle\in W_{\overline n})$
whenever $j\prec i$, $\FV(\phi)\subseteq\{x\}$.
\end{enumerate}
Let each $\T(n)=(T_i(n))_{i\in\omega}$.
There is some $n\in\N$ such that $\T(n)$ is true.
\end{theorem}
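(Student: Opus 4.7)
The plan is to mimic the proof of Theorem \ref{onethreethree} in the stratified $\LPA(\indset)$-world, where the obstinate self-truth axiom $\ucl{\Pr i\phi\rightarrow\phi}$ becomes an ordinal-indexed family $\ucl{\Prr\alpha i\phi\rightarrow\phi}$ that admits transfinite induction, and then to descend back to $\LPA(\omega)$ via $^-$ and the Stratification Theorem. Since $\T^0$ is stratifiable-r.e.-generic, fix a straticlosed-r.e.-generic family $\T^{*}=(T^{*}_i)_{i\in\omega}$ with $(T^{*}_i)^{-}=T^0_i$; using Lemma \ref{secondutilbagvalidity} and Lemma \ref{straticlosedstitching}, enlarge $\T^{*}$ so that each $T^{*}_i$ contains $i$-Assigned Strativalidity. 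For each $i$, fix a computable $i$-stratifier and write $\phi\mapsto\phi^{+}$ for its action, suppressing $i$ from notation.

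For each $n\in\N$, let $\U(n)=(U_i(n))_{i\in\omega}$, where $U_i(n)$ is the smallest $i$-straticlosed $\LPA(\indset)$-theory containing (i) the axioms of $T^{*}_i$, (ii) $\ucl{\Prr\alpha i\phi\rightarrow\phi}$ for every $i$-stratified $\Prr\alpha i\phi$, (iii) $\ucl{\Pr j\phi\rightarrow\phi^{+}}$ for each $j\prec i$ and every $\LPA(\omega)$-formula $\phi$, and (iv) the indexing schema $\forall x(\Pr j\phi\leftrightarrow\langle\overline{\ulcorner\phi\urcorner},\overline j,x\rangle\in W_{\overline n})$ for $j\prec i$, $\FV(\phi)\subseteq\{x\}$. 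Since $i$-straticlosedness supplies $i$-Strativalidity, $i$-Stratideduction and $i$-Collapse, $(\phi^{+})^{-}\equiv\phi$, coverings preserve $^-$, and Lemma \ref{verystratifiableaxioms} descends $i$-straticlosure to $\Pr i$-closure, one checks that $U_i(n)^{-}=T_i(n)$ for each $i$. By the $S$-$m$-$n$ theorem fix a total computable $f$ with
\[
W_{f(n)}=\{\langle\ulcorner\phi\urcorner,j,m\rangle\,:\,\FV(\phi)\subseteq\{x\},\ U_j(n)^{-}\models\phi(x|\overline m)\},
\]
and by the Recursion Theorem fix $n$ with $W_n=W_{f(n)}$. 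Write $\U=\U(n)$ and $\T=\T(n)$.

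The main task is to prove $\M_{\str\U}\models\U$. Granted this, Lemma \ref{verystratifiableaxioms} plus Theorem \ref{stratificationtheorem} yield $\M_{\str\U}\models U_i^{-}=T_i$ for every $i$; since $\M_{\str\U}$ and $\M_\T$ agree on $\LPA(\omega)$-formulas (both interpret $\Pr i\phi[s]$ as $T_i\models\phi^s$), we obtain $\M_\T\models\T$. We prove $\M_{\str\U}\models U_i$ by $\prec$-induction on $i$, with a subsidiary transfinite induction on $\max\onset(\sigma)$ within the fixed level. Axioms from $T^{*}_i$ are satisfied by straticlosed-r.e.-genericness of $\T^{*}$ applied to the straticlosed r.e.~superset $\U$ (empty stratifier-set); indexing axioms are handled exactly as Case 2 of Theorem \ref{onethreethree}, using $W_n=W_{f(n)}$; the $i$-Strativalidity, $i$-Stratideduction and $i$-Collapse axioms hold automatically in $\M_{\str\U}$ (the first two by modus-ponens inside $U_i\cap\alpha$, the third directly from Theorem \ref{collapsetheorem}); the $i$-straticlosure and unistratified-closure cases are routine via the inner induction and Corollary \ref{hpreserversvalidity}; and for a lower-truth axiom $\ucl{\Pr j\phi\rightarrow\phi^{+}}$ with $j\prec i$, assuming $\M_{\str\U}\models\Pr j\phi[s]$ gives $U_j^{-}\models\phi^s$, the outer induction hypothesis gives $\M_{\str\U}\models U_j^{-}$ (via Lemma \ref{verystratifiableaxioms} and the Stratification Theorem), hence $\M_{\str\U}\models\phi^s$ by soundness, and then Lemma \ref{structurecollapsingmagic} lifts this to $\M_{\str\U}\models(\phi^{+})^s$. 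The hard case is the self-truth schema: if $\M_{\str\U}\models\Prr\alpha i\phi[s]$ then $U_i\cap\alpha\models\phi^s$; the inner ordinal induction tells us $\M_{\str\U}$ already satisfies every axiom of $U_i$ with superscripts below $\alpha$, so $\M_{\str\U}\models U_i\cap\alpha$, whence $\M_{\str\U}\models\phi^s$ and $\M_{\str\U}\models\phi[s]$ by Lemma \ref{howintendedworks}. The main obstacle, as flagged at the opening of Section \ref{stratificationsection}, is precisely this case: a naive $\prec$-induction cannot verify even the single instance $\Pr i(1=0)\rightarrow(1=0)$ without presupposing consistency of all of $T_i$. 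Stratification dodges this by splitting the one impredicative operator $\Pr i$ into the $\epom$-family $\Prr\alpha i$, each instance of which refers only to the strictly smaller fragment $U_i\cap\alpha$, thereby opening transfinite induction. A secondary bookkeeping obstacle---keeping every formula in $U_i$ $i$-stratified while arranging $U_i^{-}=T_i$---is what forces the $i$-stratifier into the conclusion of (iii), since $\ucl{\Pr j\phi\rightarrow\phi}$ itself is generally not $i$-stratified when $\phi$ contains an occurrence of $\Pr i$.
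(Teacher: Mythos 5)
Your plan follows the paper's architecture — stratify to $\U$ with $U_i^-=T_i$, induct by $\prec$ with an inner ordinal induction, and descend via the Stratification Theorem — but there is a real gap in how you verify the lower-truth axioms $\ucl{\Pr j\phi\rightarrow\phi^+}$ for $j\prec i$. You write: ``the outer induction hypothesis gives $\M_{\str\U}\models U_j^{-}$ \ldots, hence $\M_{\str\U}\models\phi^s$ by soundness, and then Lemma \ref{structurecollapsingmagic} lifts this to $\M_{\str\U}\models(\phi^{+})^s$.'' That lift is not what Lemma \ref{structurecollapsingmagic} says. The lemma is the biconditional $\M_{\str\U}^+\models\phi[s]\iff\M_{\str\U}\models\phi^+[s]$; to conclude $\M_{\str\U}\models(\phi^+)^s$ you need $\M_{\str\U}^+\models\phi^s$, not $\M_{\str\U}\models\phi^s$. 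The structures $\M_{\str\U}$ and $\M_{\str\U}^+$ disagree precisely on the interpretation of $\Pr i$ applied to $\LPA(\omega)$-formulas, and $\phi$ may well contain $\Pr i$, so the inference does not go through. (You correctly identify the self-truth schema as the motivation for stratifying, but the actual technical bottleneck that shapes the proof is this $j\prec i$ case, not the self-truth case.)

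The paper's fix is to prove a \emph{stronger} statement by $\prec$-induction: for every $i$ and every computable stratifier-set $I(i)$ above $i$, $\M^{I(i)}\models U_i\cup T_i$. Then, to verify $\ucl{\Pr j\phi\rightarrow\phi^+}$ at level $i$ with stratifier-set $I(i)$, one sets $J(j)=I(i)\cup\{\bullet^+\}$, which is a computable stratifier-set above $j$ (here it matters that $j\prec i$ and $I(i)$ is above $i$), invokes the strengthened induction hypothesis at $j$ to get $\M^{J(j)}\models T_j$, then uses Lemma \ref{decomposingMtotheI} to identify $\M^{J(j)}=(\M^{I(i)})^+$, and only then applies Lemma \ref{structurecollapsingmagic}. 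Your proposal, which keeps the stratifier-set implicitly equal to $\emptyset$ throughout the $\prec$-induction, never establishes $\M_{\str\U}^+\models T_j$ and so cannot close this case. To repair the proof you would need to either adopt the paper's strengthened hypothesis over all computable stratifier-sets above $i$, or find some alternative way to transfer $\M_{\str\U}\models T_j$ to $\M_{\str\U}^+\models T_j$, which is not immediate because the two structures are not elementarily equivalent on formulas involving $\Pr i$ until after the theorem is proved.
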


\begin{proof}
By the $S$-$m$-$n$ Theorem, there is a total computable $f:\N\to\N$ such that $\forall n\in\N$,
\[
W_{f(n)}=\{\langle\ulcorner\phi\urcorner,j,m\rangle\in\N\,:\,\mbox{$\phi$ is an $\LPA(\omega)$-formula,
$\FV(\phi)\subseteq\{x\}$, and $T_j(n)\models\phi(x|\overline m)$}\}.
\]
By the Recursion Theorem, there is an $n\in\N$ such that $W_n=W_{f(n)}$.
We will show $\T(n)$ is true.
For the rest of the proof, we write $\T$ for $\T(n)$, $T_i$ for $T_i(n)$.

The structure of the proof is as follows.
\begin{itemize}
\item
(``Definition of $\U$'' below) First, we will define a certain
carefully-chosen family $\U=(U_i)_{i\in\omega}$ of $\LPA(\indset)$-theories
(with each $U^-_i=T_i$)
and the $\LPA(\indset)$-structure $\M=\M_{\str{\U}}$.
\item
(``Preliminary Result'' below)
Next, we will show that $\forall i\in\omega$, $\M\models U_i\cup T_i$.
In order to deal with the difficulty mentioned at
the beginning of Section \ref{stratifiersection},
we will prove more than necessary, to obtain a strong $\prec$-induction hypothesis.
Namely, we will prove, by $\prec$-induction, that $\forall i\in\omega$,
for every computable stratifier-set $I(i)$ above $i$,
$\M^{I(i)}\models U_i\cup T_i$.
    \begin{itemize}
    \item
    (Claim 1 below)
    In order to prove $\M^{I(i)}\models U_i$, we will
    use induction on $\alpha$ to show that $\M^{I(i)}\models U_i\cap\alpha$
    for all $\alpha\in\epom$.
    \item
    (Case 3 below)
    Part of proving $\M^{I(i)}\models U_i\cap\alpha$ will be proving
    $\M^{I(i)}\models\ucl{\Prr{\alpha_0} i\phi\rightarrow\phi}$
    whenever this is $i$-stratified, $\alpha_0<\alpha$.
    This is where we will use the $\alpha$-induction hypothesis.
    \item
    (Case 4 below)
    Part of proving $\M^{I(i)}\models U_i\cap\alpha$ will be proving
    $\M^{I(i)}\models\ucl{\Pr j\phi\rightarrow\phi^+}$
    whenever $j\prec i$, $\phi$ is an $\LPA(\omega)$-formula,
    and $\bullet^+$ is an $i$-stratifier. This is where we will take advantage
    of our strong $\prec$-induction hypothesis.
    \end{itemize}
\item
(Claims 2--3 below)
Once we've established $\M^{I(i)}\models U_i$, we will essentially
be able to conclude
$\M^{I(i)}\models T_i$ using the Stratification Theorem
(Theorem \ref{stratificationtheorem}).
\item
At the very end of the proof,
having established that $\forall i\in\omega$, $\M\models U_i\cup T_i$,
we will use that to prove that $\M_\T\models\T$, i.e., that $\T$ is true.
\end{itemize}

\item
\textbf{Definition of $\U$.}
Since $\T^0$ is stratifiable-r.e.-generic, there is 
a straticlosed-r.e-generic
family $\V=(V_i)_{i\in\omega}$ of $\LPA(\indset)$-theories such that each $V^-_i=T^0_i$.
For every $i\in\N$, let $U_i$ be the smallest
$i$-stratified $\LPA(\indset)$-theory such that the following hold.
\begin{enumerate}
\item $U_i$ contains $V_i$.
\item $U_i$ contains $i$-Assigned Strativalidity, $i$-Strativalidity, $i$-Stratideduction and $i$-Collapse.
\item $U_i$ contains $\ucl{\Prr\alpha i\phi\rightarrow\phi}$ whenever $\Prr\alpha i\phi$ is $i$-stratified.
\item $U_i$ contains $\ucl{\Pr j\phi\rightarrow\phi^+}$ for every $\LPA(\omega)$-formula $\phi$, $j\prec i$,
and $i$-stratifier $\bullet^+$.
\item $U_i$ contains $\forall x(\Pr j\phi\leftrightarrow\langle\overline{\ulcorner\phi\urcorner},\overline j,x\rangle\in W_{\overline n})$
whenever $j\prec i$, $\FV(\phi)\subseteq\{x\}$ and $\phi$ is an $\LPA(\omega)$-formula.
\item Whenever $\phi\in U_i$ and $\Prr\alpha i\phi$ is $i$-stratified, $\Prr\alpha i\phi\in U_i$.
\end{enumerate}
Let $\U=(U_i)_{i\in\omega}$.  Observe that $\U$ is straticlosed and r.e.~(to see $U_i$ is
$i$-unistratified, use
Lemma \ref{stratifiermagic}; to see $\U$ is r.e., use
Theorem \ref{blackbox} part 1); $\U\supseteq\V$; and for each $i\in\omega$,
$U_i^-=T_i$.

Let $\M=\M_{\str{\U}}$.
Recall that
$\str{\U}$ is the $\LPA(\indset)$-family $(S_i)_{i\in\indset}$
where $\forall i\in\omega$ and $\alpha\in\epom$, $S_i=U^-_i=T_i$ and
$S_{(\alpha,i)}=U_i\cap \alpha$.
For the reader's convenience, here is how (by definition)
$\M$ interprets $\Pr i$ and $\Prr \alpha i$ for all $i\in\omega$, $\alpha\in\epom$:
\begin{align*}
    \M\models \Pr i \phi[s] &\mbox{ iff } T_i\models \phi^s,\\
    \M\models \Prr \alpha i \phi[s] &\mbox{ iff } U_i\cap \alpha\models\phi^s.
\end{align*}

\item
\textbf{Preliminary Result.}
We would like to prove the following preliminary result:
$\forall i\in\omega$, $\M\models U_i\cup T_i$.
For the sake of a stronger induction hypothesis, we will prove that
$\forall i\in\omega$, for every computable stratifier-set $I(i)$ above $i$,
$\M^{I(i)}\models U_i\cup T_i$.
This is more than enough because $\M^{I(i)}=\M$ when
$I(i)=\emptyset$.

Fix $i\in\omega$. By $\prec$-induction, we have the following:
\begin{quote}
($*$) For every $j\prec i$, for every computable stratifier-set $J(j)$ above
$j$, $\M^{J(j)}\models U_j\cup T_j$.
\end{quote}

Let $I(i)$ be any computable stratifier-set above $i$.
We must show $\M^{I(i)}\models U_i\cup T_i$.

\item
\claim1
$\forall \alpha\in\epom$, $\M^{I(i)}\models U_i\cap\alpha$.

\item
By induction on $\alpha$.
Let $\sigma\in U_i\cap\alpha$.

\item
\case1
$\sigma\in V_i$.
Then $\M^{I(i)}\models\sigma$ because
$\V$ is straticlosed-r.e.-generic and $\U\supseteq\V$
is straticlosed and r.e.

\item
\case2
$\sigma$ is an instance of $i$-Assigned Strativalidity, $i$-Strativalidity,
or $i$-Stratideduction.  Then $\M^{I(i)}\models\sigma$
by Lemma \ref{secondutilbagdeduction} or Lemma \ref{secondutilbagvalidity}.

\item
\case3
$\sigma$ is $\ucl{\Prr{\alpha_0} i\phi\rightarrow\phi}$ for some
$i$-stratified $\LPA(\indset)$-formula $\phi$ such that $\Prr{\alpha_0}i\phi$ is $i$-stratified.
Since $\sigma\in U_i\cap\alpha$, this forces $\alpha_0<\alpha$.
Let $s$ be an assignment and
assume $\M^{I(i)}\models\Prr{\alpha_0}i\phi[s]$, then:
\begin{align*}
\M^{I(i)} &\models \Prr{\alpha_0} i\phi[s]
  &\mbox{(Assumption)}\\
\M &\models \Prr{\alpha_0} i\phi[s]
  &\mbox{($\M$ and $\M^{I(i)}$ agree on $\Prr{\alpha_0} i$
  by Def.\ \ref{moduloidefn})}\\
U_i\cap \alpha_0 &\models \phi^s
  &\mbox{(Definition of $\M$)}\\
\M^{I(i)} &\models \phi^s
  &\mbox{(By $\alpha$-induction, $\M^{I(i)}\models U_i\cap \alpha_0$)}\\
\M^{I(i)} &\models \phi[s].
  &\mbox{(Lemma \ref{raisingMtotheIpreservesintent})}
\end{align*}

\item
\case4
$\sigma$ is $\ucl{\Pr j\phi\rightarrow\phi^+}$ for some $\LPA(\omega)$-formula $\phi$, $j\prec i$,
and $i$-stratifier $\bullet^+$.
By Lemma \ref{stratifiermagic} we may assume $\bullet^+$ is computable.
Let $J(j)$ be the computable stratifier-set $J(j)=I(i)\cup\{\bullet^+\}$, which is above $j$
since $I(i)$ is above $i$ and $j\prec i$.
Let $s$ be an assignment and assume $\M^{I(i)}\models \Pr j\phi[s]$, then:
\begin{align*}
\M^{I(i)} &\models \Pr j\phi[s]
  &\mbox{(Assumption)}\\
\M &\models \Pr j\phi[s]
  &\mbox{(Since $j\prec i$ and $I(i)$ is above $i$, $\M^{I(i)}$ and $\M$ agree
  on $\Pr j$)}\\
T_j &\models \phi^s
  &\mbox{(Definition of $\M$)}\\
\M^{J(j)} &\models \phi^s
  &\mbox{(Since $\M^{J(j)}\models T_j$ by ($*$))}\\
(\M^{I(i)})^+ &\models \phi^s
  &\mbox{(Lemma \ref{decomposingMtotheI})}\\
\M^{I(i)} &\models (\phi^s)^+
  &\mbox{(Lemma \ref{structurecollapsingmagic})}\\
\M^{I(i)} &\models (\phi^+)^s
  &\mbox{(Clearly $(\phi^s)^+\equiv(\phi^+)^s$)}\\
\M^{I(i)} &\models \phi^+[s].
  &\mbox{(Lemma \ref{raisingMtotheIpreservesintent})}
\end{align*}

\item
\case5
$\sigma$ is
$\forall x(\Pr j\phi\leftrightarrow\langle\overline{\ulcorner\phi\urcorner},\overline j,
x\rangle\in W_{\overline n})$
for some $\LPA(\omega)$-formula $\phi$ with $\FV(\phi)\subseteq\{x\}$
and $j\prec i$.
Let $s$ be any assignment, say $s(x)=m$.
The following biconditionals are equivalent:
\begin{align*}
\M^{I(i)} \models \Pr j\phi&\leftrightarrow
\langle\overline{\ulcorner\phi\urcorner}, \overline j,x\rangle\in W_{\overline n}[s]\\
\M \models \Pr j\phi&\leftrightarrow
\langle\overline{\ulcorner\phi\urcorner}, \overline j,x\rangle\in W_{\overline n}[s]
  &\mbox{($\M^{I(i)}$ and $\M$ agree on the symbols in question)}\\
\M \models \Pr j\phi[s]&\mbox{ iff }\M\models
\langle\overline{\ulcorner\phi\urcorner}, \overline j,\overline m\rangle\in W_{\overline n}
  &\mbox{(Lemma \ref{raisingMtotheIpreservesintent})}\\
\M \models \Pr j\phi[s]&\mbox{ iff }
\langle\ulcorner\phi\urcorner,j,m\rangle\in W_n
  &\mbox{($\M$ has standard first-order part)}\\
T_j \models \phi^s&\mbox{ iff }
\langle\ulcorner\phi\urcorner,j,m\rangle\in W_n
  &\mbox{(Definition of $\M$)}\\
T_j \models \phi(x|\overline m)&\mbox{ iff }
\langle\ulcorner\phi\urcorner,j,m\rangle\in W_n.
  &\mbox{(Since $\FV(\phi)\subseteq\{x\}$)}
\end{align*}
The latter is true by definition of $n$.

\item
\case6
$\sigma$ is an instance $\Prr\beta i\phi\leftrightarrow\Prr\gamma i\phi$
of $i$-Collapse (so $\beta\leq_1\gamma$ and $\Prr\beta i\phi\leftrightarrow\Prr\gamma i\phi$ is
$i$-stratified).
Let $s$ be an assignment,
since $\M^{I(i)}$ and $\M$ agree
on $\Prr\beta i$ and $\Prr\gamma i$, we need only show
$\M\models\Prr\beta i\phi\leftrightarrow\Prr\gamma i\phi[s]$.
In other words we must show $U_i\cap\beta\models\phi^s$
if and only if $U_i\cap\gamma\models\phi^s$.
This is by Theorem \ref{collapsetheorem}.

\item
\case7
$\sigma$ is $\Prr{\alpha_0}i\phi$ for some $\LPA(\indset)$-formula $\phi$
such that $\Prr{\alpha_0}i\phi$ is $i$-stratified and $\phi\in U_i$.
Since $\Prr{\alpha_0}i\phi$
is $i$-stratified, $\onset(\phi)\subseteq\alpha_0$, so $\phi\in U_i\cap\alpha_0$.
Thus $\M\models \Prr{\alpha_0}i\phi$,
so $\M^{I(i)}\models \Prr{\alpha_0}i\phi$ since
$\M^{I(i)}$ and $\M$ agree on $\Prr{\alpha_0}i$.

\item
Cases 1--7
establish $\M^{I(i)}\models U_i\cap\alpha$.
By arbitrariness of $\alpha$, Claim 1 is proved.

\item
\claim2
For any assignment $s$
and any very $i$-stratified $\LPA(\indset)$-formula $\phi$,
$\M^{I(i)}\models\phi[s]$ if and only if
$\M^{I(i)}\models\phi^-[s]$.

\item
By Theorem \ref{stratificationtheorem}, for all such $s$ and $\phi$,
$\M\models\phi[s]$ if and only if $\M\models\phi^-[s]$.
The claim now follows from Lemma \ref{technicalLemmaToShortenMainProof}
($i\not\in\indices(I(i))$ because $I(i)$ is above $i$).

\item
\claim3
$\M^{I(i)}\models T_i$.

\item
For any $\sigma\in T_i$,
there is some $\tau\in U_i$ such that $\tau^-\equiv\sigma$;
since $U_i$ is $i$-unistratified, we may take $\tau$ to be very $i$-stratified
(Lemma \ref{verystratifiableaxioms}).
By Claim 1, $\M^{I(i)}\models U_i$,
so $\M^{I(i)}\models\tau$.
By Claim 2, $\M^{I(i)}\models\sigma$.

\item
For each $i\in\omega$, letting $I(i)=\emptyset$,
Claims 1--3 show that
$\M\models U_i\cup T_i$.
It follows that $\M\models \T$.
Now, for every $i\in\omega$, $\M_\T$ interprets $\Pr i$ as follows:
\[
\M_\T\models \Pr i \phi[s] \mbox{ iff } T_i\models \phi^s.
\]
This is exactly the same way that $\M$ interprets $\Pr i$.
It follows that $\M$ and $\M_\T$ agree on $\LPA(\omega)$-formulas.
Thus, since $\M\models\T$, $\M_\T\models \T$, i.e., $\T$ is true.
\end{proof}

\section{Well-Foundation and Ill-Foundation}

The following is a variation on Kleene's $\O$.

\begin{definition}
\label{kleeneodefn}
Simultaneously define $\O\subseteq\N$
and $|\bullet|:\O\to\mathrm{Ord}$ so that
$\O\subseteq\N$
is the smallest set such that:
\begin{enumerate}
\item $0\in\O$ (it represents the ordinal $|0|=0$).
\item $\forall n\in\O$, $2^n\in \O$ (it represents the ordinal $|2^n|=|n|+1$).
\item If $\varphi_e$ (the $e$th partial recursive function) is total
and $\mathrm{range}(\varphi_e)\subseteq\O$,
then $3\cdot 5^e\in\O$
(it represents the ordinal $|3\cdot 5^e|=\sup\{|\varphi_e(0)|,|\varphi_e(1)|,\ldots\}$).
\end{enumerate}
\end{definition}

To avoid technical complications,
we have differed from the usual Kleene's $\O$ in the following way:
in the usual definition, in order for $3\cdot5^e$ to lie in $\O$,
it is also required that $|\varphi_e(0)|<|\varphi_e(1)|<\cdots$.

\begin{definition}
$\LPO$ is the language of Peano arithmetic extended by
a unary predicate $\O$.
The following notions are defined by analogy with Section \ref{prelimsect}:
\begin{enumerate}
\item For any assignment $s$ and $\LPO(I)$-formula $\phi$ with $\FV(\phi){=}\{x_1,\ldots,x_n\}$,
$\phi^s\equiv\phi(x_1|\overline{s(x_1)})\cdots(x_n|\overline{s(x_n)})$.
\item If $\T=(T_i)_{i\in I}$ is an $I$-indexed family of $\LPO(I)$-theories,
the \emph{intended structure} for $\T$ is the $\LPO(I)$-structure
$\M_\T$ with universe $\N$, interpreting symbols of $\mathrm{PA}$ as usual
and interpreting $\O$ as $\mathcal O$, and interpreting $\Pr i$ ($i\in I$)
as in Definition \ref{intendedstructdefn}.
For any $\LPO(I)$-structure $\mathscr N$, we write $\mathscr N\models \T$
if $\forall i\in I$, $\mathscr N\models T_i$.
We say $\T$ is \emph{true} if $\M_\T\models\T$.
\end{enumerate}
\end{definition}

\begin{definition}
If $I$ is an index set and 
$\T=(T_i)_{i\in I}$ is a family of $\LPO(I)$-theories,
then for any $i\in I$
such that $\M_\T\models T_i$,
we define the ordinal $\|T_i\|=\sup\{|m|+1\,:\,T_i\models\O(\overline m)\}$.
\end{definition}

The above definition makes sense:
since $\M_\T\models T_i$
and $\O^{\M_\T}=\O$,
the supremands are defined.

\begin{definition}
\label{basicaxiomsofodefn}
The \emph{basic axioms of $\O$} are the following $\LPO$-axioms.
\begin{enumerate}
\item $\O(0)$.
\item $\O(\overline n)\rightarrow \O(\overline{2^n})$, for every $n\in\N$.
\item $\forall x(\varphi_{\overline n}(x){\downarrow}\mathrel{\&}\O(\varphi_{\overline n}(x)))
\rightarrow
\O(\overline{3\cdot 5^n})$, for every $n\in\N$.
\end{enumerate}
\end{definition}

We have written the last two lines using infinite schemata
to strengthen the following result.

\begin{theorem}
\label{olanguagestructurethm}
Let $I$ be an index set,
$\prec$ a binary relation on $I$.
Suppose $\T=(T_i)_{i\in I}$
is a family of $\LPO(I)$-theories
with the following properties:
\begin{enumerate}
\item $\forall i\in I$, $T_i$ contains the axioms of Peano arithmetic.
\item $\forall i\in I$, $T_i$ contains the basic axioms of $\O$.
\item $\forall i\in I$, $\forall j\prec i$, $\exists n\in\N$
such that $T_i\models\forall x(\Pr j\O(x)\leftrightarrow x\in W_{\overline n})$.
\item $\forall i\in I$, $\forall j\prec i$, $T_i\models \forall x(\Pr j\O(x)\rightarrow\O(x))$.
\end{enumerate}
If $\M_\T\models T_i\cup T_j$ (in particular if $\T$ is true)
and $j\prec i$, then $\|T_j\|<\|T_i\|$.
\end{theorem}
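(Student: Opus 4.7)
The plan is to have $T_i$ write down a single ordinal notation in $\O$ whose ordinal value strictly exceeds $\|T_j\|$. By hypothesis (3), fix $n$ with $T_i\models\forall x(\Pr j\O(x)\leftrightarrow x\in W_{\overline n})$. Since $\M_\T\models T_i$ and, by definition of $\M_\T$, $\M_\T\models\Pr j\O(x)[s(x|m)]$ iff $T_j\models\O(\overline m)$, we obtain $W_n=\{m\in\N:T_j\models\O(\overline m)\}$. Applying hypothesis (4) together with $\M_\T\models T_i$, we also get $W_n\subseteq\O$. Hypothesis (2) gives $T_j\models\O(0)$, so $0\in W_n$; in particular $W_n$ is nonempty.

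Next, I would construct a total computable $f$ with $\mathrm{range}(f)=W_n$: enumerate $W_n$ and pad with $0$'s at stages where nothing new appears (legitimate since $0\in W_n$). Fix a Turing index $e'$ for $f$. Since PA handles the routine facts that $\varphi_{\overline{e'}}$ is total and that every one of its outputs lies in $W_{\overline n}$, and since combining (3) and (4) yields $T_i\models\forall x(x\in W_{\overline n}\to \O(x))$, we conclude $T_i\models\forall x(\varphi_{\overline{e'}}(x){\downarrow}\mathrel{\&}\O(\varphi_{\overline{e'}}(x)))$. Invoking the third basic axiom of $\O$ (instantiated at $e'$) and then the second basic axiom, one gets $T_i\models\O(\overline{3\cdot 5^{e'}})$ and hence $T_i\models\O(\overline{2^{3\cdot 5^{e'}}})$.

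Finally, I would compute the ordinals. Set $\alpha:=\sup\{|m|:m\in W_n\}$. Definition \ref{kleeneodefn} gives $|3\cdot 5^{e'}|=\sup\{|\varphi_{e'}(k)|:k\in\N\}=\alpha$ and $|2^{3\cdot 5^{e'}}|=\alpha+1$. Therefore $\|T_i\|\geq |2^{3\cdot 5^{e'}}|+1=\alpha+2$, whereas for any $m\in W_n$ we have $|m|+1\leq\alpha+1$, so $\|T_j\|=\sup\{|m|+1:m\in W_n\}\leq\alpha+1<\alpha+2\leq\|T_i\|$, delivering the strict inequality.

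The only genuinely non-mechanical step is the PA-formalization inside $T_i$ of the fact that the explicitly-constructed $\varphi_{\overline{e'}}$ is total with outputs in $W_{\overline n}$; this is a textbook arithmetization of a primitive recursive padding construction and is the step that requires the most care, though it is essentially routine. Everything else follows directly from the hypotheses, Kleene's successor and supremum clauses for $\O$, and the definitions of $\M_\T$ and of $\|\cdot\|$.
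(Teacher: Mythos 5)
Your proposal is correct and follows essentially the same route as the paper: fix $n$ with $W_n=\{m:T_j\models\O(\overline m)\}$, produce a PA-provably total index for a computable enumeration of $W_n$, invoke the third basic axiom of $\O$ in $T_i$, and compare the resulting ordinal notations. The only cosmetic difference is the last step: you apply the successor axiom once more inside $T_i$ to get $\|T_i\|\geq\alpha+2$ while crudely bounding $\|T_j\|\leq\alpha+1$, whereas the paper uses the successor-closure of $\{|m|:T_j\models\O(\overline m)\}$ to get the exact identity $\|T_j\|=|3\cdot5^k|$ and then $\|T_i\|\geq|3\cdot5^k|+1$; both variants are sound.
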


\begin{proof}
Assume $\M_\T\models T_i\cup T_j$ and $j\prec i$.
By hypothesis
there is some $n\in\N$ such that $T_i\models\forall x(\Pr j\O(x)\leftrightarrow x\in W_{\overline n})$
and $T_i\models\forall x(\Pr j\O(x)\rightarrow\O(x))$.
From these, $T_i\models\forall x(x\in W_{\overline n}\rightarrow \O(x))$.

Since $\M_\T\models T_i$,
in particular $\M_\T\models \forall x(\Pr j\O(x)\leftrightarrow x\in W_{\overline n})$.
This means $W_n=\{m\in\N\,:\,T_j\models\O(\overline m)\}$.
Since $T_j$ includes the axiom $\O(0)$,
$W_n\not=\emptyset$.

Since $W_n\not=\emptyset$,
by computability theory
there is some $k\in\N$ such that
\[
\mathrm{PA} \models (\mathrm{domain}(\varphi_{\overline k})=\N)\wedge (\mathrm{range}(\varphi_{\overline k})=W_{\overline n}). 
\]
Since $T_i$ includes $\mathrm{PA}$,
$T_i$ also implies as much.
Combined with
$T_i\models\forall x(x\in W_{\overline n}\rightarrow\O(x))$,
it follows that
$T_i\models
\forall x(\varphi_{\overline{k}}(x){\downarrow} \mathrel{\&} \O(\varphi_{\overline{k}}(x)))$.
Since $T_i$ contains
the basic axiom
$\forall x(\varphi_{\overline k}(x){\downarrow}
\mathrel\& \O(\varphi_{\overline k}(x)))\rightarrow\O(\overline{3\cdot 5^k})$,
$T_i\models \O(\overline{3\cdot 5^k})$.

To finish the proof, calculate
\begin{align*}
\|T_j\|
&=
\sup\{|m|+1\,:\,T_j\models\O(\overline m)\}\\
&=
\sup\{|m|\,:\,T_j\models\O(\overline m)\}
  &\mbox{(Since $T_j$ contains $\O(\overline n)\rightarrow\O(\overline{2^n})$ for all $n\in\N$)}\\
&=
\sup\{|m|\,:\,m\in W_n\}
  &\mbox{(Since $W_n=\{m\in\N\,:\,T_j\models \O(\overline m)\}$)}\\
&=
\sup\{|\varphi_k(0)|,|\varphi_k(1)|,\ldots\}
  &\mbox{(By choice of $k$)}\\
&=
|3\cdot 5^k|
  &\mbox{(Definition \ref{kleeneodefn})}\\
&<
\sup\{|m|+1\,:\,T_i\models \O(\overline m)\}
  &\mbox{(Since $T_i\models \O(\overline{3\cdot 5^k})$)}\\
&=
\|T_i\|.
\end{align*}
\end{proof}

\begin{corollary}
\label{olanguagewellfounded}
(Well-Foundedness of True Self-Referential Theories)
Let $I$, $\T$, $\prec$ be as in Theorem \ref{olanguagestructurethm}.
If $\T$ is true then $\prec$ is well founded,
by which we mean there is no infinite descending sequence
$i_0\succ i_1\succ\cdots$.
\end{corollary}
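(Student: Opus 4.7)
The plan is to argue by contradiction: suppose $\prec$ is not well founded, so there exists an infinite descending chain $i_0 \succ i_1 \succ i_2 \succ \cdots$ in $I$. Since $\T$ is true, $\M_\T \models T_i$ for every $i \in I$, and in particular $\M_\T \models T_{i_k} \cup T_{i_{k+1}}$ for every $k \in \N$. Each consecutive pair $(i_k, i_{k+1})$ satisfies $i_{k+1} \prec i_k$ and the hypotheses of Theorem \ref{olanguagestructurethm}, so that theorem applies directly to every step of the chain.

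Next I would apply Theorem \ref{olanguagestructurethm} repeatedly to obtain, for every $k \in \N$, the strict inequality $\|T_{i_{k+1}}\| < \|T_{i_k}\|$. Chaining these inequalities yields an infinite strictly decreasing sequence of ordinals
\[
\|T_{i_0}\| > \|T_{i_1}\| > \|T_{i_2}\| > \cdots,
\]
which contradicts the well-foundedness of $\mathrm{Ord}$. This contradiction establishes that no such descending $\prec$-chain exists, completing the proof.

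There is essentially no obstacle here: the corollary is a one-line consequence of Theorem \ref{olanguagestructurethm}, with the only nontrivial observation being that the hypothesis ``$\M_\T \models T_i \cup T_j$'' in that theorem is automatic when $\T$ is true, since truth of $\T$ means $\M_\T \models T_i$ for all $i \in I$ simultaneously. The proof requires no new machinery beyond Theorem \ref{olanguagestructurethm} and the well-foundedness of the ordinals.
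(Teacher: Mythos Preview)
Your proposal is correct and matches the intended argument: the paper states this corollary without proof, treating it as an immediate consequence of Theorem~\ref{olanguagestructurethm}, and your contradiction via an infinite descending sequence of ordinals $\|T_{i_0}\|>\|T_{i_1}\|>\cdots$ is exactly the inference the reader is expected to supply.
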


In particular
Corollary \ref{olanguagewellfounded} says that
if $I$, $\T$, $\prec$ are as in Theorem \ref{olanguagestructurethm} and $\T$ is true
then $\prec$ is strict: there is no $i$ with $i\prec i$.
This gives a new form
(under the additional new assumption of containing/knowing basic rudiments of computable ordinals)
of the Lucas--Penrose--Reinhardt argument that a truthful theory (or machine)
cannot state (or know) its own truth and its own G\"odel number.

We could remove Peano arithmetic from Theorem \ref{olanguagestructurethm}
if we further departed from Kleene and changed line 3 of Definition 
\ref{kleeneodefn} to read:
\begin{enumerate}
\setcounter{enumi}{2}
\item If $W_e\subseteq\O$, then $3\cdot 5^e\in\O$ (and 
$|3\cdot 5^e|=\sup\{|n|\,:\,n\in W_e\}$, or $|3\cdot 5^e|=0$
if $W_e=\emptyset$)
\end{enumerate}
(and altered Definition \ref{basicaxiomsofodefn} accordingly).
The previous paragraph would still stand, in fact giving a version
of the Lucas--Penrose--Reinhardt argument in which the theory (machine)
is not required to contain (know) arithmetic.

We close the paper by showing that Corollary \ref{olanguagewellfounded}
fails without $\O$.  Let $\mathrm{WF}$ be the set
of all r.e.~well-founded partial orders on $\omega$
and let $\Tr$ be the set of all
true $\LPA$-sentences.
It is well-known that $\mathrm{WF}$ is computability theoretically
$\Pi^1_1$-complete and $\Tr$ is $\Delta^1_1$, so
$\mathrm{WF}$ cannot be defined
in $\LPA\cup\{\Tr\}$.

\begin{theorem}
\label{illfoundednesstheorem}
(Ill-Foundedness of True Self-Referential Theories)
\begin{enumerate}
\item
There exists an r.e., ill-founded partial order $\prec$ on $\omega$
such that for every closed-r.e.-generic $\T^0=(T^0_i)_{i\in\omega}$
there is an $n\in\N$ such that $\T(n)$ is true,
where $\T(n)$ is as in Theorem \ref{onethreethree}.
\item
There exists an r.e., ill-founded partial order $\prec$ on $\omega$
such that for every $\prec$-stratifiable-r.e.-generic $\T^0=(T^0_i)_{i\in\omega}$
there is an $n\in\N$ such that $\T(n)$ is true,
where $\T(n)$ is as in Theorem \ref{generalizedtwoonethree}.
\end{enumerate}
\end{theorem}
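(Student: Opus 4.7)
The plan is to prove both parts nonconstructively by a $\Pi^1_1$-versus-$\Delta^1_1$ definability argument, as foreshadowed in the introduction. For part 1, let $S_1$ denote the set of r.e.\ indices coding r.e.\ partial orders $\prec$ on $\omega$ with the property that for every closed-r.e.-generic $\T^0=(T^0_i)_{i\in\omega}$, there exists $n\in\N$ with $\T(n)$ true (where $\T(n)$ is defined as in Theorem \ref{onethreethree}). Theorem \ref{onethreethree} itself shows $\mathrm{WF}\subseteq S_1$. I claim $S_1$ is definable from $\LPA$ together with the arithmetic truth predicate $\Tr$, and is therefore $\Delta^1_1$. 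Since $\mathrm{WF}$ is $\Pi^1_1$-complete, $\mathrm{WF}$ is not $\Delta^1_1$, so $\mathrm{WF}\subsetneq S_1$; any $\prec\in S_1\setminus\mathrm{WF}$ is the desired r.e., ill-founded partial order.

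The main obstacle is the complexity bound on $S_1$, which I would build up in stages. First, for any r.e.\ family $\V=(V_i)_{i\in\omega}$ of $\LPA(\omega)$-theories given by an r.e.\ index, truth in the intended structure $\M_\V$ is definable from $\Tr$ by induction on formula complexity: the only nontrivial case, $\M_\V\models\Pr i\phi[s]$, reduces to ``$V_i\models\phi^s$,'' which is r.e.\ uniformly in the index and thus expressible using $\Tr$. Consequently ``$\M_\V\models\V$'' is arithmetic in $\Tr$. Closed-r.e.-genericness is then the conjunction of ``$\T$ is r.e.'' with a universal quantifier over natural-number indices of closed r.e.\ superfamilies $\U\supseteq\T$ followed by the condition ``$\M_\U\models\T$,'' and hence is arithmetic in $\Tr$. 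Finally, the map $n\mapsto\T(n)$ produces an r.e.\ index uniformly computable from $n$, an index of $\T^0$, and an index of $\prec$, so ``$\exists n\,\T(n)\text{ is true}$'' is arithmetic in $\Tr$. Combining these observations, membership in $S_1$ is arithmetic in $\Tr$, hence $\Delta^1_1$, completing the contradiction with the $\Pi^1_1$-completeness of $\mathrm{WF}$.

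For part 2, the argument is entirely parallel, with $\T(n)$ now as in Theorem \ref{generalizedtwoonethree} and with the input class widened to stratifiable-r.e.-generic families $\T^0$ of $\LPA(\omega)$-theories. The only new ingredient is that stratifiable-r.e.-genericness is still arithmetic in $\Tr$: by definition, it is the existential projection over an r.e.\ index of an $\LPA(\indset)$-lift $\V=(V_i)_{i\in\omega}$ with $V^-_i=T^0_i$ that is straticlosed-r.e.-generic, and straticlosed-r.e.-genericness has the same logical shape as closed-r.e.-genericness, augmented by an effective universal quantifier over computable stratifier-sets $I(i)$ above $i$ and by the condition $\M^{I(i)}_{\str\U}\models T_i$, none of which raise the complexity above arithmetic in $\Tr$. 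Defining $S_2$ analogously to $S_1$ and invoking Theorem \ref{generalizedtwoonethree} for the inclusion $\mathrm{WF}\subseteq S_2$, the same $\Pi^1_1$-versus-$\Delta^1_1$ dichotomy yields an ill-founded $\prec\in S_2\setminus\mathrm{WF}$ witnessing part 2.
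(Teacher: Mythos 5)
Your proposal is correct and follows essentially the same nonconstructive route as the paper: both reduce the claim to the fact that $\mathrm{WF}$ is $\Pi^1_1$-complete while the relevant clauses (closed-/stratifiable-r.e.-genericness and truth of $\T(n)$) are definable in $\LPA\cup\{\Tr\}$, hence $\Delta^1_1$. The only cosmetic difference is that you phrase it as $\mathrm{WF}\subsetneq S_1$ directly, while the paper argues by contradiction from $\neg(1)$; the substance, including the key translation of $\M_\U\models\phi[s]$ into a $\Tr$-definable arithmetic condition, is the same.
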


\begin{proof}
We prove (1), (2) is similar.
Assume $\neg(1)$.
For each r.e.\ partial order $\prec$ on $\omega$, let $S(\prec)$ be
the statement of Theorem \ref{onethreethree} for $\prec$,
minus the requirement
that $\prec$ be well founded. Combining $\neg(1)$ with Theorem
\ref{onethreethree}, $\prec$ is well founded if and only if
$S(\prec)$ is true. We will argue that $S(\prec)$ is expressible in
$\LPA\cup\{\Tr\}$, which is absurd because that would mean it is
possible to define $\mathrm{WF}$ in $\LPA\cup\{\Tr\}$.

$S(\prec)$ is equivalent to the following:
\begin{itemize}
    \item
    For any (G\"odel number of an) r.e.\ family $\T^0=(T^0_i)_{i\in\omega}$
    of $\LPA(\omega$)-theories,
    if $\T^0$ is closed-r.e.-generic
    (i.e., if $\M_\U\models\T^0$ for every closed r.e.\ family $\U\supseteq\T^0$
    of $\LPA(\omega)$-theories), then there is some $n\in\mathbb N$
    such that $\T(n)$ is true
    (i.e., such that $\M_{\T(n)}\models\T(n)$), where
    $\T(n)=(T_i(n))_{i\in\omega}$, where each $T_i(n)$ is the smallest
    $\Pr i$-closed theory containing the following:
    \begin{enumerate}
        \item
        The axioms in $T^0_i$.
        \item
        $\forall x (\Pr j \phi \leftrightarrow \langle \overline{\ulcorner \phi\urcorner},
        \overline j, x\rangle\in W_{\overline n})$ whenever $j\in\omega$,
        $\FV(\phi)\subseteq\{x\}$.
        \item
        $\ucl{\Pr j \phi\rightarrow\phi}$ whenever $j\prec i$.
    \end{enumerate}
\end{itemize}
This is manifestly expressible in $\LPA$ except for the clauses
$\M_\U\models \T^0$ and $\M_{\T(n)}\models \T(n)$.
We will show that $\M_\U\models \T^0$ is expressible in $\LPA\cup\{\Tr\}$;
the expressibility of $\M_{\T(n)}\models \T(n)$ is similar.

Define an operator $F_\U$ which takes an $\LPA(\omega)$-formula $\phi$ and outputs
an $\LPA$-formula $F_\U(\phi)$ as follows:
\begin{itemize}
    \item
    If $\phi$ is atomic, let $F_\U(\phi)\equiv \phi$.
    \item
    If $\phi$ is $\neg\phi_0$, $\phi_1\rightarrow\phi_2$,
    or $\forall x\phi_0$, let $F_\U(\phi)$
    be $\neg F_\U(\phi_0)$, $F_\U(\phi_1)\rightarrow F_\U(\phi_2)$,
    or $\forall x F_\U(\phi_0)$, respectively.
    \item
    Suppose $\phi$ is $\Pr i \psi$ and $\FV(\psi)=\{x_1,\ldots,x_k\}$.
    Let $f:\mathbb N^k\to\mathbb N$ be the computable function such that
    for all $m_1,\ldots,m_k\in\mathbb N$,
    $f(m_1,\ldots,m_k)
    =
    \ulcorner\psi(x_1|\overline{m_1})\cdots (x_k|\overline{m_k})\urcorner$.
    Let $F_\U(\phi)$ be: ``$U_i$ proves
    the sentence with G\"odel number $f(x_1,\ldots,x_k)$''
    (so $\FV(F_\U(\phi))=\{x_1,\ldots,x_k\}$).
\end{itemize}
It is easy to check that for every $\LPA(\omega)$-formula $\phi$ and assignment $s$,
$\M_\U\models\phi[s]$ if and only if $\mathbb N\models F_\U(\phi)[s]$.
In particular, for every $\LPA(\omega)$-sentence $\phi$, $\M_\U\models\phi$
if and only if $\mathbb N\models F_\U(\phi)$. Thus,
the clause $\M_\U\models \T^0$ can be expressed
in $\LPA\cup\{\Tr\}$ as follows:
$\forall i\forall x(x\in T^0_i\rightarrow \Tr(\ulcorner F_\U(x)\urcorner))$.
\end{proof}

The way we prove Theorem \ref{illfoundednesstheorem} by referring to the
computability theoretical complexity of $\mathrm{WF}$ is similar to a
recent argument by Kripke \cite{kripke}.


\begin{thebibliography}{99}

\bibitem{alexanderdissert}
Alexander, S. (2013).
The Theory of Several Knowing Machines.  Doctoral dissertation, the Ohio State University.

\bibitem{alexandercode}
Alexander, S. (2014).
\newblock A machine that knows its own code.
\newblock {\em Studia Logica}, \textbf{102}, 567--576.

\bibitem{benacerraf}
Benacerraf, P. (1967).
\newblock God, the Devil, and G\"{o}del.
\newblock {\em The Monist}, \textbf{51}, 9--32.

\bibitem{carlson1999}
Carlson, T.J. (1999).  Ordinal arithmetic and $\Sigma_1$-elementarity.
\emph{Archive for Mathematical Logic}, \textbf{38}, 449--460.

\bibitem{carlson2000}
Carlson, T.J. (2000). Knowledge, machines, and the consistency of Reinhardt's strong mechanistic thesis.
\hspace{-2pt}\emph{Annals of Pure and Applied Logic}, \textbf{105}, 51--82.

\bibitem{carlson2001}
Carlson, T.J. (2001).  Elementary patterns of resemblance.
\emph{Annals of Pure and Applied Logic}, \textbf{108}, 19--77.

\bibitem{kripke}
Kripke, S.A. (2019).
\newblock Ungroundedness in Tarskian Languages.
\newblock {\em The Journal of Philosophical Logic}, \textbf{48}, 603--609.

\bibitem{lucas}
Lucas, J.R. (1961).
\newblock Minds, machines, and G\"{o}del.
\newblock {\em Philosophy}, \textbf{36}, 112--127.

\bibitem{penrose}
Penrose, R. (1989).
\newblock {\em The Emperor's New Mind: Concerning Computers, Minds, and
the Laws of Physics}. Oxford University Press.

\bibitem{putnam}
Putnam, H. (2006).
\newblock After G\"{o}del.
\newblock {\em Logic Journal of the IGPL}, \textbf{14}, 745--754.

\bibitem{reinhardt}
Reinhardt, W. (1985).
\newblock Absolute versions of incompleteness theorems.
\newblock {\em Nous}, \textbf{19}, 317--346.

%
%

\bibitem{shapiro1985}
Shapiro, S. (1985).  Epistemic and Intuitionistic Arithmetic.  In: S.~Shapiro (ed.), \emph{Intensional Mathematics} (North-Holland, Amsterdam),
pp.~11--46.

\end{thebibliography}
\end{document}